\tikzset{
    >=stealth,
    every picture/.style={thick},
    graphs/every graph/.style={empty nodes},
}
\tikzstyle{vertex}=[
\tikzstyle{printersafe}=[decoration={snake,amplitude=0pt}]
\newcommand{\rank}{\operatorname{rank}}
\newcommand{\pp}{\mathbb{P}}
\newcommand{\qq}{\mathbb{Q}}
\newcommand{\zz}{\mathbb{Z}}
\newcommand{\rr}{\mathbb{R}}
\newcommand{\kk}{\mathbb{K}}
\def\O#1.{\mathcal {O}_{#1}}			
\def\pr #1.{\mathbb P^{#1}}				
\def\af #1.{\mathbb A^{#1}}			
\def\ses#1.#2.#3.{0\to #1\to #2\to #3 \to 0}	
\def\xrar#1.{\xrightarrow{#1}}			
\def\K#1.{K_{#1}}						
\def\bA#1.{\mathbf{A}_{#1}}			
\def\bM#1.{\mathbf{M}_{#1}}				
\def\bL#1.{\mathbf{L}_{#1}}				
\def\bB#1.{\mathbf{B}_{#1}}				
\def\bK#1.{\mathbf{K}_{#1}}			
\def\subs#1.{_{#1}}					
\def\sups#1.{^{#1}}
\DeclareMathOperator{\reg}{reg}
\newtheorem{introthm}{Theorem}
  \newtheorem{theorem}{Theorem}[section]
  \newtheorem{lemma}[theorem]{Lemma}
  \newtheorem{proposition}[theorem]{Proposition}
  \newtheorem{notation}[theorem]{Notation}
  \newtheorem{definition}[theorem]{Definition}
  \newtheorem{example}[theorem]{Example}
  \newtheorem{question}[theorem]{Question}
\newtheorem{remark}[theorem]{Remark}
\theoremstyle{remark}
\numberwithin{equation}{section}
\begin{document}

\title[Lct's and coregularity]{Log canonical thresholds and coregularity}

\author[F. Figueroa]{Fernando Figueroa}
\address{Department of Mathematics, Princeton University, Fine Hall, Washington Road, Princeton, NJ 08544-1000, USA}
\email{fzamora@princeton.edu}

\author[J.~Moraga]{Joaqu\'in Moraga}
\address{Department of Mathematics, Princeton University, Fine Hall, Washington Road, Princeton, NJ 08544-1000, USA
}
\email{jmoraga@princeton.edu}

\author[J.~Peng]{Junyao Peng}
\address{Department of Mathematics, Princeton University, Fine Hall, Washington Road, Princeton, NJ 08544-1000, USA
}
\email{junyaop@princeton.edu}

\subjclass[2020]{Primary 14B05, 14E30, 14M25;
Secondary  14A20.}
\maketitle

\begin{abstract}
We prove the ascending chain condition for log canonical thresholds of bounded coregularity.
\end{abstract}

\setcounter{tocdepth}{1} 
\tableofcontents

\section{Introduction}

The main approach to study algebraic singularities is to introduce invariants that allow us to measure how singular a point in an algebraic variety is.
Once the invariant is introduced, the next step is to understand what values it can take, 
whether it detects smoothness, how the numerical data reflects on the singularity
and vice-versa.
The best-known invariant of singularities is the multiplicity.
The multiplicity played a fundamental role in the resolution of singularities~\cite{Kol07}.
However, it is often too coarse of an invariant to obtain information in other contexts.
This has led algebraic geometers to search for new invariants of singularities that could guide further developments in the understanding of algebraic singularities.

The log canonical threshold,
formerly known as complex singularity exponent,
was introduced by Atiyah in the study of the division of distributions using resolution of singularities~\cite{Ati70}.
The first properties of log canonical thresholds were proved by Varchenko in connection with the asymptotic expansion of integrals
and Hodge structures~\cite{Var83}.
Shokurov realized that the log canonical threshold could be defined purely in terms of the singularities of the minimal model program~\cite{Sho92}.
They conjectured that log canonical thresholds of the same dimension satisfy the ascending chain condition (ACC), i.e., there is no infinite increasing sequence.
This conjecture follows a common philosophy in algebraic geometry:
in a fixed dimension, we can not find a sequence of singularities that get less and less singular.
The previous principle is not mathematically precise, but
it has led to many important conjectures on the behavior of algebraic singularities (see, e.g.,~\cite{Sho04,BS10}).

Although its modern definition 
is purely in terms 
in terms of log discrepancies, 
the log canonical threshold has deep connections with other topics in mathematics:
the growth of the number of polynomial solutions in $\zz/p^n$~\cite{Igu00},
dimensions of jet schemes~\cite{Mus02},
the theory of D-modules~\cite{Kol13},
the $\alpha$-invariant and
algebraic K-stability~\cite{Xu20},
vanishing theorems~\cite{Laz04b}, non-rationality of Fano varieties~\cite{dFEM03a},
and positive characteristic methods~\cite{BHMM12}.
It is also worth mentioning that the log canonical threshold is deeply related to the termination of flips~\cite{Bir12,Mor18a,HM20}.

We briefly recall some of the developments toward the understanding of log canonical thresholds.
In~\cite{Kuw98,Kuw99a,Kuw99b}, Kuwata computed log canonical thresholds of hypersurface singularities,
surfaces in $\mathbb{C}^3$, 
and reducible plane curves.
In~\cite{Pro01,Pro02}, Prokhorov started the study of accumulation points of log canonical thresholds, conjecturing that they must come from log canonical thresholds in lower dimensions.
De Fernex, Ein, and Musta\c{t}\u{a} drew a connection between the log canonical threshold and multiplicity in~\cite{dFEM03b}.
In~\cite{Chel09}, Cheltsov studied the log canonical threshold of Fano threefold hypersurfaces and its relation to birational rigidity.
In~\cite{dFEM10}, De Fernex, Ein, and Musta\c{t}\u{a} proved the ascending chain condition for log canonical thresholds in smooth varieties.
In that paper, the authors used an inversion of adjunction for complete intersection varieties~\cite{EM04}.
Then, they achieved the ACC for log canonical thresholds on varieties with bounded singularities in~\cite{dFEM11}.
This achievement used the theory of ultrafilters.
In~\cite{HMX14}, Hacon, McKernan, and Xu, proved the ACC for log canonical thresholds. 
Using ideas from the minimal model program, 
the authors reduced this statement to a problem about projective varieties of log Calabi--Yau type. 
Then, they improved the Hacon-McKernan developments on birational boundedness of varieties of log general type to prove this global result.
In~\cite{BZ16}, Birkar and Zhang generalized the results for log canonical thresholds to the context of generalized pairs. This improvement was vital for the effectivity of Iitaka fibrations and boundedness of Fano varieties~\cite{BZ16,Bir19,Bir21}.
In~\cite{McL19}, McLean proved that the log canonical threshold can be expressed in terms of Floer cohomology.

Now, we turn to explain the main theorem of this article.
When computing log canonical thresholds, it is natural to start with orbifold and toric examples.
If $(T;t)$ is the germ of a toric singularity and $\Gamma$ is a reduced torus invariant divisor, then the pair 
$(T,\Gamma;t)$ is always log canonical.
This means that, regardless of the dimension of the germ, 
the log canonical threshold is only one.
This leads to a natural question:
are log canonical thresholds controlled by the dimension
or can they be controlled by a weaker invariant?
Some computations show that the closer to the toric setting we are, the fewer values for log canonical thresholds we can produce (see Example~\ref{ex:1}). 
In order to make this notion precise, we define the {\it coregularity} of a log canonical singularity $(X,B;x)$ to be:
\begin{equation}
\tag{1}
\label{def:coreg-intro}
{\rm coreg}(X,B;x):= \dim X -\dim \mathcal{D}(X,B;x)-1.
\end{equation} 
Here, $\mathcal{D}(X,B;x)$ is the {\it dual complex} of the singularity, which captures the combinatorial 
complexity of its resolution (see, e.g.,~\cite{dFKX17,KX16}).
The coregularity of a log canonical singularity is an integer between zero and its dimension minus one (see Definition~\ref{def:coreg-general}). Toric singularities with the reduced toric boundary have coregularity zero (see Example~\ref{ex:1}). 
The set of log canonical thresholds of coregularity at most $c$ is defined to be:
\[
{\rm LCT}_c(I,J):=
\left\{
t \mid 
t={\rm lct}(X,B;\Gamma),
{\rm coeff}(B)\in I, 
{\rm coeff}(\Gamma)\in J,\text{ and } 
{\rm coreg}(X,B+t\Gamma)\leq c
\right\}.
\] 
The main theorem of this article is the following:
\begin{introthm}\label{introhm:acc-coreg}
Let $c\geq 1$ be a positive integer.
Let $I$ and $J$ be two sets of nonnegative real numbers
satisfying the descending chain condition.
Then, the set ${\rm LCT}_c(I,J)$ satisfies the
ascending chain condition.
\end{introthm}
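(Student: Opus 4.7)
The plan is to follow the overall architecture of the Hacon--McKernan--Xu proof of the ACC for log canonical thresholds \cite{HMX14}, and to upgrade every step so that the coregularity is carried along as an extra piece of data. Assume for contradiction that there is a strictly increasing sequence $t_i = \operatorname{lct}(X_i, B_i; \Gamma_i)$ inside $\operatorname{LCT}_c(I,J)$, with coefficients in the DCC sets $I$ and $J$ and with $\operatorname{coreg}(X_i, B_i + t_i \Gamma_i) \leq c$. The goal is to show that this sequence must stabilize.

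First, I would replace each pair $(X_i, B_i + t_i \Gamma_i)$ by a dlt modification $(Y_i, B_i^Y + t_i \Gamma_i^Y)$. By definition of the lct the modified pair is lc but not klt, so there is a log canonical center $W_i$ dominating a component of the special fiber over $x_i$; by construction of the dlt modification this center is a stratum of the reduced boundary, and the coregularity bound translates to $\dim Y_i - \dim \mathcal{D}(Y_i, B_i^Y + t_i \Gamma_i^Y) - 1 \leq c$. This gives a pair with a distinguished reduced lc center of controlled combinatorial complexity.

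Second, following the global-to-local reduction of \cite{HMX14}, I would localize around $W_i$ and run a relative MMP so as to produce either a Mori fiber structure on a crepant birational model or, after contracting excess components of the dual complex, a log Calabi--Yau fibration $(Y_i', \Delta_i) \to Z_i$ to which the center $W_i$ descends. Adjunction to the generic fiber and to $W_i$, via the canonical bundle formula and Shokurov--Birkar ACC for the different and for moduli parts, produces lower-dimensional generalized pairs whose coefficients again lie in a DCC set depending only on $I \cup J$. Crucially $(Y_i', \Delta_i)$ inherits the bound $\operatorname{coreg} \leq c$, and the original values $t_i$ can be recovered from the DCC coefficient data of the global pair, so the claim is reduced to a global ACC on a family of log Calabi--Yau pairs of bounded coregularity.

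The main obstacle, and the step that requires the new ingredient beyond \cite{HMX14}, is a boundedness statement for log Calabi--Yau pairs of bounded coregularity with coefficients in a DCC set. For $c = 0$ this should follow from Birkar's theory of complements and boundedness of Fano type varieties, because a full-dimensional dual complex forces a small uniform $N$-complement and hence a bounded model up to crepant equivalence. For $c \geq 1$ I would argue by induction on $c$: pick a prime component of the reduced boundary, apply adjunction to reduce to a generalized log Calabi--Yau pair of dimension $d-1$ and coregularity at most $c-1$, and invoke the inductive hypothesis together with the propagation of DCC through the canonical bundle formula. The delicate point, and the real technical heart of the argument, is verifying that the adjunction coefficients remain inside a DCC set depending only on $I$, $J$, and $c$ throughout this inductive peeling; granting this, global ACC on the boundary yields ACC of the $t_i$ and the desired contradiction.
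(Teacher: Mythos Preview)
Your overall architecture---reduce to a global ACC statement for log Calabi--Yau pairs of bounded coregularity, then prove that global statement---matches the paper. But the mechanism you propose for the global step contains a genuine conceptual error, and the ``main obstacle'' you identify is not the one that actually arises.

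The error is in your inductive step: you write that adjunction to a prime component of the reduced boundary yields ``a generalized log Calabi--Yau pair of dimension $d-1$ and coregularity at most $c-1$.'' This is false. For a log Calabi--Yau pair, divisorial adjunction to a component of $\lfloor B\rfloor$ \emph{preserves} the coregularity (this is Lemma~\ref{lem:coreg-adjunction} in the paper, a consequence of the fact that the dual complex of a log Calabi--Yau pair is equidimensional). What drops is the dimension, not the coregularity. So your induction on $c$ does not get off the ground.

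Relatedly, the ``boundedness statement for log Calabi--Yau pairs of bounded coregularity'' that you flag as the technical heart is neither needed nor available: such pairs have unbounded dimension (the paper gives explicit examples), so boundedness in the usual sense is meaningless. The actual mechanism is much simpler. As long as $\dim X > c$, the reduced boundary $\lfloor B\rfloor$ is nonempty (Lemma~\ref{coregularity-and-dimension}), so one can run an MMP to force the distinguished coefficient-$d$ divisor to meet $\lfloor B\rfloor$, then adjoin. Iterating, one lands on a log Calabi--Yau pair of dimension at most $c+1$ whose coefficients lie in $D(I)$ and still record $d$ via an element of $D_d(I)$ (Lemma~\ref{lem:CY-coreg-to-dim}). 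At that point the \emph{already known} global ACC in bounded dimension \cite{HMX14,BZ16} applies directly---no new boundedness theorem, no canonical bundle formula, no induction on $c$. The delicate point you correctly anticipate (tracking DCC coefficients through adjunction) is handled by elementary bookkeeping with the sets $D(I)$ and $D_d(I)$, not by any deep new input.
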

Note that we do not fix the dimension of our germs. 
Hence, the previous theorem extends to a great extent the main result of~\cite{HMX14}.
Indeed, the set ${\rm LCT}_c(I,J)$ contains all the germs of dimension at most $c+1$ but it also contain germs of unbounded dimension (see Example~\ref{ex:3}).
In Example~\ref{ex:4}, we show that both the conditions on the coefficients and the coregularity are necessary to obtain the ascending chain condition.

Our definition of coregularity in~\eqref{def:coreg-intro} is only for the local case; we call it the {\it local coregularity}.
In Definition~\ref{def:coreg-general}, we give the definition of the coregularity for a normal quasi-projective log canonical pair.
In a few words, a pair has coregularity $c$ if locally around its minimal log canonical centers it has coregularity $c$.
In the log Calabi--Yau setting, this condition can be checked at any point of the dual complex.
Indeed, the dual complex of a log Calabi--Yau pair is equi-dimensional~\cite{KK10}.

In the minimal model program, 
it is usual to study singularities by constructing birational modifications
and reducing the problem about the singularity 
to a problem about the exceptional locus
of this modification.
This sort of argument is known as 
{\it global-to-local principle}
in algebraic geometry.
Thus, we reduce the proof of the 
ACC for lct's with bounded coregularity
(Theorem~\ref{introhm:acc-coreg})
to the following statement about log Calabi--Yau varieties:

\begin{introthm}\label{introthm:acc-global-coreg}
Let $c$ be a positive integer.
Let $I$ be a set satisfying the descending chain condition (DCC).
There exists a finite subset $I_0\subseteq I$ satisfying the following. 
Let $(X,B)$ be a projective log canonical pair so that:
\begin{enumerate}
    \item the equivalence $K_X+B \equiv 0$ holds; 
    \item the set ${\rm coeff}(B)$ is contained in $I$; and 
    \item the coregularity of $(X,B)$ is at most $c$.
\end{enumerate}
Then, we have that ${\rm coeff}(B)\subseteq I_0$.
\end{introthm}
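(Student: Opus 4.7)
The plan is to reduce this global statement to the known global ACC for generalized log Calabi--Yau pairs in bounded dimension (\cite{HMX14,BZ16}), via iterated adjunction along the components of $\lfloor B\rfloor$ descending to a minimal log canonical center of dimension at most $c$.

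First, I would replace $(X,B)$ by a $\qq$-factorial dlt modification $(Y, B_Y)\to(X,B)$, which preserves both $K_Y+B_Y\equiv 0$ and the coregularity. Since $(Y, B_Y)$ is dlt log Calabi--Yau, the dual complex $\D(Y, B_Y)$ is equidimensional \cite{KK10} of dimension $\dim Y - {\rm coreg}(Y,B_Y) - 1$, so every maximal face corresponds to a minimal log canonical center $W$ of dimension equal to the coregularity, hence $\le c$.

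Next, iterated adjunction along a flag of components of $\lfloor B_Y\rfloor$ terminating in $W$ produces a generalized dlt log Calabi--Yau pair $(W, B_W + M_W)$ of dimension $\le c$. By the iterated adjunction formulas, the coefficients of $B_W$ are hyperstandard expressions
\[
\frac{m-1+\sum_i b_i n_i}{m},\qquad b_i\in I,\ m,n_i\in\zz_{>0},
\]
in the coefficients of $B_Y$, so $\coeff(B_W)$ lies in a DCC set $\Phi(I)$ depending only on $I$. Applying the global ACC for generalized log Calabi--Yau pairs in dimension at most $c$ yields a finite $I_0'\subseteq\Phi(I)$, depending only on $c$ and $I$, containing $\coeff(B_W)$ for every $(X,B)$ as in the statement.

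The main obstacle is the lifting step: recovering the finiteness of $\coeff(B)$ from that of $\coeff(B_W)$. Only components of $B$ meeting $W$ contribute to $B_W$ under adjunction, so a single choice of $W$ is insufficient. To circumvent this, I would argue that for each component $D\subseteq \Supp(B)$, one may choose the flag of adjunctions so that $D$ meets the resulting minimal log canonical center $W$; this should follow from Koll\'ar's connectedness principle applied to the log Calabi--Yau pair $(X,B)$, possibly after extracting a divisor dominating $D$ via a suitable MMP. Once each such $D$ is matched with a component of some $B_W$ having coefficient in $I_0'$, inverting the hyperstandard formula together with the DCC of $I$ pins the coefficient of $D$ down to a finite subset of $I$; uniformity over all such pairs yields the desired $I_0$.
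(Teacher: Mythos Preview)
Your high-level strategy matches the paper's: reduce to bounded dimension via adjunction and then invoke the global ACC of \cite{HMX14,BZ16}. The gap is precisely where you flag it, and your proposed fix does not work.

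Koll\'ar's connectedness principle controls the non-klt locus, i.e.\ the strata of $\lfloor B_Y\rfloor$. A prime component $D\subseteq\Supp(B)$ whose coefficient you want to track typically has coefficient strictly less than $1$, so $D$ is \emph{not} part of the non-klt locus, and connectedness says nothing about whether $D$ meets $\lfloor B_Y\rfloor$ or any chosen flag terminating in a minimal center $W$. Your parenthetical ``possibly after extracting a divisor dominating $D$ via a suitable MMP'' is the whole content of the argument, and as stated it is not a mechanism.

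The paper supplies exactly this mechanism in Lemma~\ref{lem:CY-coreg-to-dim}. One fixes an arbitrary component $S_Y\subseteq\lfloor B_Y\rfloor$ and runs a $(K_Y+B_Y-\epsilon S_Y+M_Y)$-MMP; since $K_Y+B_Y+M_Y\equiv 0$, every step is $S_Y$-positive and the run terminates with a Mori fiber space. A short case analysis then forces the intersection you need: if the strict transform of $D$ is contracted at some step, that step is $S$-positive so a curve in $D$ meets $S$; if not, at the Mori fiber space one compares $D$ and $S$ against the relative Picard rank one structure (both are horizontal/relatively ample, or one restricts to a general $\pp^1$-fiber). In every case one obtains, after a further dlt modification, a component $S$ of $\lfloor B\rfloor$ with $D\cap S\neq\emptyset$, and adjunction to $S$ then produces a coefficient in $D_d(I)$ strictly less than $1$. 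Iterating drops the dimension to at most $c+1$ while keeping track of $d$, and the contradiction with the global ACC in bounded dimension is then an elementary argument with the shape of the elements of $D_d(I)$ (Lemma~\ref{lem:finite-to-finite} type). Replacing your connectedness appeal by this MMP-forcing step would complete your outline.
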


Statements such as the previous one are known as global ascending chain conditions (or global ACC, for short).
We call the previous theorem the {\it global ACC with bounded coregularity}.
As we will show throughout the proof, 
the advantage of working with log canonical thresholds of small coregularity 
is that we can often reduce these computations to low dimensional computations.
Following this philosophy, we can give a precise description of the log canonical thresholds of coregularity zero and one (see Theorem~\ref{thm:coreg-one}).

\begin{introthm}\label{introthm:coreg-zero-one}
Let $I$ and $J$ be two sets of nonnegative real numbers. 
Then, we have that 
\[
{\rm LCT}_0(I,J):=
\left\{ 
\frac{1-\sum_{k=1}^m n_ki_k}{\sum_{k=1}^n m_kj_k}\geq 0 \mid m,n\in \zz_{\geq 0}, 
n_k,m_k\in \zz_{>0},
i_k\in I, \text{ and }
j_k\in J
\right\}.
\] 
\end{introthm}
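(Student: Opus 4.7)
The plan is to establish both inclusions in the claimed equality, with the reverse inclusion being the substantial one.

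For $\supseteq$, we realize each nonnegative value $\tfrac{1-\sum n_k i_k}{\sum m_k j_k}$ as the log canonical threshold of some coregularity-zero germ. The basic building block is the curve germ $(\mathbb{A}^1, i\cdot[0]; j\cdot[0])$, for which the lct equals $(1-i)/j$ and the coregularity is zero. More general integer multiplicities $n_k,m_k$ are obtained by thickening into higher-dimensional toric germs and using weighted blow-ups whose weights produce the required integers, while the toric structure automatically preserves the coregularity-zero condition.

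For $\subseteq$, fix a germ $(X,B;x)$ with $\mathrm{coeff}(B)\subseteq I$, an effective divisor $\Gamma$ with $\mathrm{coeff}(\Gamma)\subseteq J$, and let $t=\mathrm{lct}(X,B;\Gamma)$ with $\mathrm{coreg}(X,B+t\Gamma;x)=0$. The strategy is to reduce the computation of $t$ to a one-dimensional pair via iterated divisorial adjunction. Take a dlt modification $\pi\colon Y\to X$ of $(X,B+t\Gamma)$ so that $\pi^{*}(K_{X}+B+t\Gamma)=K_{Y}+B_{Y}+t\Gamma_{Y}$ with $(Y,B_{Y}+t\Gamma_{Y})$ dlt. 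The coregularity-zero assumption forces the dual complex of $(Y,B_{Y}+t\Gamma_{Y})$ to have the maximal dimension $d-1$, where $d=\dim X$, yielding a chain
\[
Y\supset S_{1}\supset S_{1}\cap S_{2}\supset\cdots\supset S_{1}\cap\cdots\cap S_{d-1}=C
\]
of log canonical strata that terminates at a curve $C$ meeting a final coefficient-one component $S_{d}$ at a zero-dimensional lc center $p\in C$. Each $S_{i}$ is either the strict transform of a $B$-component of coefficient one, the strict transform of a $\Gamma$-component with $tj=1$, or an exceptional divisor of log discrepancy zero.

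Apply Shokurov--Koll\'ar divisorial adjunction iteratively down the chain, tracking the contributions from $B$ and from $\Gamma$ separately. After $d-1$ divisorial adjunctions we obtain a one-dimensional pair $(C,B_{C}+t\Gamma_{C})$ at $p$. By the invariance of the lct under crepant restriction together with the lc defining condition at the zero-dimensional stratum $p$, we get
\[
\mathrm{coeff}_{p}(B_{C}) + t\cdot\mathrm{coeff}_{p}(\Gamma_{C}) = 1.
\]
Using the explicit form of the iterated different and the snc structure of the chain inside the dlt model, the coefficients $\mathrm{coeff}_{p}(B_{C})$ and $\mathrm{coeff}_{p}(\Gamma_{C})$ are expressed as non-negative integer combinations of the original coefficients from $I$ and $J$, respectively; solving for $t$ yields the claimed form $t=(1-\sum n_{k}i_{k})/(\sum m_{k}j_{k})$.

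The main obstacle is the bookkeeping of the iterated different. At each step the Shokurov--Koll\'ar formula produces the different coefficient $\frac{m-1}{m}+\sum_{k}\frac{l_{k}b_{k}}{m}$ with a ramification index $m$ and multiplicities $l_{k}$. Proving that this assembles into the clean form ``$1$ minus an integer combination of $I$'' in the numerator and ``an integer combination of $J$'' in the denominator requires using the coregularity-zero hypothesis in its full strength: the chain lies inside the snc locus of the dlt model (so $m=1$ along the chain and no intrinsic $(m-1)/m$ shift accumulates), and the ``$1$'' of the numerator is produced entirely by the lc defining condition at $p$, arising from the restriction of the final coefficient-one component $S_{d}$.
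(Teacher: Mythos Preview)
Your argument for the inclusion $\subseteq$ has a genuine gap. In the dlt model $(Y,B_Y+t\Gamma_Y)$, the zero-dimensional stratum $p=S_1\cap\cdots\cap S_d$ lies in the open set where $Y$ is smooth and the $S_i$ are coordinate hyperplanes. If any prime component $D$ of $B_Y+t\Gamma_Y$ with coefficient in $(0,1)$ passed through $p$, then blowing up $p$ would produce an exceptional divisor of log discrepancy $d-\sum_i 1-(\text{coeff of }D)\cdot\mathrm{mult}_p D<0$, contradicting log canonicity. Hence no fractional component of $B_Y$ or $\Gamma_Y$ meets $p$, so $\mathrm{coeff}_p(\Gamma_C)=0$ except in the degenerate case $tj=1$; your key equation collapses to $1=1$ and yields no constraint on $t$. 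The problem is not the bookkeeping of the different---it is that in a single dlt modification the information carried by $\Gamma$ never reaches the deepest stratum.

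The paper circumvents this by a different mechanism. It first passes to a \emph{projective} log Calabi--Yau pair by adjoining to a single divisorial lc center over a minimal lc center contained in $\mathrm{Supp}\,\Gamma$ and restricting to a general fiber (the argument of Theorem~\ref{thm:local-acc-coreg-c}). Then, rather than descending a fixed snc chain, it repeatedly runs a $(K+B-\epsilon S)$-MMP terminating in a Mori fiber space (Lemma~\ref{lem:CY-coreg-to-dim}); each run \emph{forces} the surviving $\Gamma$-component to intersect either a boundary divisor or a general fiber, so its contribution is preserved after adjunction. The endgame is a global degree computation on $\mathbb{P}^1$, not a local coefficient at a point, and it is the presence of a coefficient-one point on $\mathbb{P}^1$ (guaranteed by coregularity zero) that kills all but one of the orbifold indices $N_k$ and produces the clean form $(1-i)/j$.

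Your $\supseteq$ sketch is also incomplete: the curve germ $(\mathbb{A}^1,i[0];j[0])$ only realizes $i\in I$, $j\in J$, and ``thickening into higher-dimensional toric germs with weighted blow-ups'' does not obviously yield coregularity \emph{zero}---a single exceptional lc place over a smooth point gives coregularity $\dim X-1$, not $0$. The paper instead builds an explicit example on the double blow-up $\mathrm{Bl}_q\mathrm{Bl}_p\mathbb{P}^2$, arranging two exceptional divisors $E_1,E_2$ that meet so the dual complex is one-dimensional.
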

Note that in the previous theorem 
the set of log canonical thresholds is described with no assumptions on $I$ or $J$.
However, it is clear from the description
that it satisfies the ascending chain condition
if and only if $I$ and $J$ satisfy
the descending chain condition (except for trivial cases).

Finally, even if the set of log canonical thresholds with bounded coregularity satisfies the ascending chain condition, 
it is intuitive to study its accumulation points,
meaning, the set of infinite decreasing sequences in the set of log canonical thresholds.
In~\cite{HMX14}, the authors prove that accumulation points of $n$-dimensional log canonical thresholds 
come from $(n-1)$-dimensional log canonical thresholds (up to increasing the coefficients set).
In this direction, we prove that the analogous statement holds for log canonical thresholds with bounded coregularity.

\begin{introthm}\label{introthm:accum-points}
Let $I\subset [0,1]$ be a set satisfying the DCC.
Assume that $1\in I$ is the only accumulation point 
and $I=I^+$.
Then, we have that the accumulation points of 
${\rm LCT}_c(I)$ 
are contained in 
${\rm LCT}_{c-1}(I)$.
\end{introthm}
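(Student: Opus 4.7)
The plan is to adapt the Hacon--McKernan--Xu strategy for accumulation points of log canonical thresholds to the setting of bounded coregularity, with the global ACC (Theorem~\ref{introthm:acc-global-coreg}) as the main input. Let $t$ be an accumulation point of $\mathrm{LCT}_c(I)$. By Theorem~\ref{introhm:acc-coreg} the set $\mathrm{LCT}_c(I)$ satisfies ACC, so $t$ is necessarily the limit of a strictly decreasing sequence $t_n\searrow t$ with $t_n=\mathrm{lct}(X_n,B_n;\Gamma_n)$ and $\mathrm{coreg}(X_n,B_n+t_n\Gamma_n)\leq c$. After replacing each germ by a dlt modification and selecting a minimal log canonical center, the hypotheses $I=I^+$ and ``$1$ is the only accumulation point of $I$'' let me pass to a subsequence along which the coefficient multisets of $B_n$ and $\Gamma_n$ are constant; the only data that varies is the parameter $t_n$.

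Next, I would invoke the global-to-local principle used to deduce Theorem~\ref{introhm:acc-coreg} from Theorem~\ref{introthm:acc-global-coreg}. This construction attaches to each germ a projective log canonical pair $(Y_n,\Delta_n)$ satisfying $K_{Y_n}+\Delta_n\equiv 0$ and $\mathrm{coreg}(Y_n,\Delta_n)\leq c$, in which one of the coefficients of $\Delta_n$ is an affine function of $t_n$ of the form $1-a_nt_n$ for a positive integer $a_n$. Applying Theorem~\ref{introthm:acc-global-coreg} to the family $\{(Y_n,\Delta_n)\}$ confines the coefficients to a finite subset of $I$; since $t_n$ is strictly monotone, the integers $a_n$ must therefore be unbounded. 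This unboundedness forces the associated minimal log canonical centers to specialize in the limit, producing a new log canonical place. Equivalently, the dual complex of the limit log Calabi--Yau pair strictly enlarges, and so the coregularity drops by at least one.

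To convert this heuristic into a rigorous argument, I would extract the limiting pair using boundedness of log Calabi--Yau pairs of coregularity at most $c$, the same boundedness input underlying the proof of Theorem~\ref{introthm:acc-global-coreg}. Pulling this limit back through the global-to-local construction yields a local lc germ with coefficients in $I$ whose lct equals $t$ and whose coregularity is at most $c-1$, which gives $t\in\mathrm{LCT}_{c-1}(I)$. The main obstacle I anticipate is this last step: verifying that the degeneration keeps the coefficients inside the DCC set $I$ rather than merely in its closure, and that the enlarged dual complex of the limit truly corresponds to a bona fide realization of $t$ as an lct of coregularity $\leq c-1$. Handling this cleanly requires the assumption $I=I^+$ (to prevent new accumulation coefficients from appearing) together with the specialization behavior of dual complexes developed earlier in the paper.
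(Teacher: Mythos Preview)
Your proposal has a genuine gap at the crucial step. You invoke ``boundedness of log Calabi--Yau pairs of coregularity at most $c$'' to extract a limiting pair, but no such boundedness holds and none is used anywhere in the paper: log Calabi--Yau pairs of coregularity $\leq c$ exist in every dimension (this is precisely the point of the coregularity setting), so there is no bounded family in which to take a limit. The global ACC of Theorem~\ref{introthm:acc-global-coreg} is not proved via boundedness either; it is proved by an inductive dimension reduction (Lemma~\ref{lem:CY-coreg-to-dim}) that replaces $(X,B)$ by a log Calabi--Yau pair of dimension at most $c+1$, after which one quotes the global ACC in bounded dimension from~\cite{HMX14,BZ16}. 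Without a bounded family your degeneration argument for the dual complex has nowhere to live, and the assertion that the coregularity of the limit drops by one is unsupported.

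The paper takes a different and much shorter route. It establishes the identity $\mathrm{LCT}_c(I) = N_{\dim \leq c}(I)$, where $N_{\dim \leq c}(I)$ records those $t$ for which some log Calabi--Yau pair of dimension at most $c$ has a coefficient in $D_t(I)$. The inclusion $\subseteq$ comes from the dimension reduction already carried out in the proofs of the local and global ACC; the inclusion $\supseteq$ is realized explicitly by taking orbifold cones, as in the proof of Theorem~\ref{thm:coreg-one}. Once this identity is in place, the accumulation-point question is a question about pairs of bounded dimension, and one simply cites~\cite[Proposition~11.7]{HMX14}, which already gives $\mathrm{Acc}(N_{\dim \leq c}(I)) \subseteq N_{\dim \leq c-1}(I)$. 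No new limiting or degeneration argument is required; all of that work is contained in the cited result of~\cite{HMX14}.
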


In the previous statement,
${\rm LCT}(I)$ stands for ${\rm LCT}(I,\zz_{>0})$,
the set $D(I)$ is the derived set of $I$, 
and $I^+$ is the set of positive integral combinations of the elements of $I$ (see Definition~\ref{def:DI}).
In particular, $D(I)$ and $I$ also satisfy the DCC (see Lemma~\ref{lem:DI-DCC}).
The set $D(I)$ emerges from the set $I$ naturally when performing adjunction to a divisor (see Lemma~\ref{lem:coeff-adj}).

We prove many of the theorems in this paper in the context of generalized pairs as in~\cite{BZ16}.
We presented them in the setting of pairs in the introduction to simplify the exposition.

As a counterpart of the coregularity, one can define the {\em regularity} of a klt singularity (see Definition~\ref{def:coreg-general}).
This definition is motivated by Shokurov and Prokhorov's work in the theory of complements for surfaces~\cite{Sho00,Pro01c}.
The regularity of klt singularities has connections with minimal log discrepancies~\cite{Mor20d,Mor21b,Mor21a},
boundedness of klt singularities~\cite{HLM20,Mor18c},
and local fundamental groups~\cite{BFMS20,Mor20c,Mor21}.
Indeed, the regularity bounds the rank of finite abelian groups acting on klt germs.
We expect further developments in the understanding of klt and lc singularities from the perspective of regularity and coregularity. 

\subsection*{On the techniques of the article} 
The idea of using projective geometry to
study log canonical thresholds dates back to McKernan and Prokhorov~\cite{MP03}.
In~\cite{HMX14}, this approach, adjunction theory, and the minimal model program are used to settle
the ascending condition for log canonical thresholds of bounded dimension.
In~\cite{BZ16}, Birkar and Zhang generalized this result to the setting of generalized pairs.
Our article relies on the ideas of both papers~\cite{HMX14,BZ16}.
We aim to reduce Theorem~\ref{introthm:acc-global-coreg}
to the usual global ACC for generalized pairs~\cite[Theorem 1.6]{BZ16}.
To do so, we will apply the strategy of~\cite{HMX14}
inductively on the coregularity.
For this aim, we use the minimal model program and study how the coregularity behaves under adjuntion (Lemma~\ref{lem:coreg-adjunction}) and
by passing to the general fiber of a fibration (Lemma~\ref{lem:coreg-gen-fiber}).
In the following subsection, we expand on the details of the proofs.

\subsection*{Sketch of the proof:}
In this section, we give a sketch of the proof of our main results
Theorem~\ref{introhm:acc-coreg}
and Theorem~\ref{introthm:acc-global-coreg}.

As explained above,
Theorem~\ref{introthm:acc-global-coreg}
implies 
Theorem~\ref{introhm:acc-coreg}
by means of a global-to-local reduction.
However, this requires an argument slightly different than the usual one.
For simplicity,
we consider the local setting
of a log canonical singularity 
$(X,B+t\Gamma;x)$ 
which is strictly log canonical at $x$, 
the coefficients
of $B$ and $\Gamma$ belong to a 
set $I$ satisfying the DCC
and the coregularity of the pair
$(X,B+t\Gamma;x)$ is at most $c$.
Then, we consider a dlt modification
$(Y,B_Y+t\Gamma_Y+\sum_{i=1}^r S_i)$ of the previous pair. 
Here $B_Y$ (resp. $\Gamma_Y$)
is the strict transform of
$B$ (rep. $\Gamma$) on $Y$.
Furthermore, the divisors $S_1,\dots,S_r$ are the divisors extracted by the dlt modification.
We denote by
$\phi\colon Y\rightarrow X$ 
the projective birational morphism.
In order to apply the global result, 
we need to choose a component $S_i$ which intersects $\Gamma_Y$ and perform adjunction to it.
This way, we can use the global ACC 
in order to control the coefficient $t$.
By Lemma~\ref{lem:coreg-adjunction}, we may control the coregularity of the pair induced on $S_i$.
However, to obtain a projective pair, we need to pass to the general fiber 
of $S_i\rightarrow \phi(S_i)$
which may have higher coregularity (see Example~\ref{ex:5}).
In order to fix this issue,
we will consider a minimal log canonical center $Z$ of $(X,B+t\Gamma;x)$ which contains $x$. 
We may choose $Z$ so that the support of $t\Gamma$ contains $Z$.
By possibly blowing up our dlt modification further, we may assume that $S_1$ maps onto $Z$.
By running a suitable MMP on $Y$ over $X$, we may assume that $-S_1$ is nef over the base.
At this step, we may lose the dlt condition of the log pair on $Y$, but it will remain log canonical.
Under the assumption on $S_1$, 
the set-theoretic preimage of $Z$ on $Y$ equals $S_1$.
In particular, $t\Gamma_Y$ must intersect $S_1$
and the intersection
$t\Gamma_Y \cap S_1$ dominates $Z$.
We may take a further dlt modification, keeping the previous conditions (after possibly replacing $S_1$ with some other component of coefficient one).
Thus, we are in the situation that
$t\Gamma_Y \cap S_1$ dominates $Z$.
Since $Z$ is a minimal log canonical center, 
Lemma~\ref{lem:coreg-gen-fiber} 
implies that the coregularity
of the log pair induced on the general fiber of
$S_1\rightarrow Z$ is bounded by $c$. 
Thus, we can use the global ACC to control the coefficient $t$ multiplying
the divisor $\Gamma$.

Now, we proceed to sketch the proof of the global ACC.
Consider a sequence of log Calabi--Yau pairs $(X_i,B_i)$
as in the statement of Theorem~\ref{introthm:acc-global-coreg}.
However, for each one there is a prime component $D_i\subset X_i$, 
so that ${\rm coeff}_{D_i}(B_i)$ violates the ascending chain condition, i.e., it is strictly increasing.
If the sequence of varieties $X_i$ has bounded dimension, 
then the finiteness statement follows from the global ACC proved in~\cite{HMX14}.
The aim is to replace $(X_i,B_i)$ with a dlt modification 
and perform adjunction to a component of
$\lfloor B_i\rfloor$ that intersects $D_i$.
By Lemma~\ref{coregularity-and-dimension},
as far as $\dim X_i>c$, the divisor
$\lfloor B_i\rfloor$ will be non-empty.
A priori, it may happen that 
$\lfloor B_i \rfloor \cap D_i =\emptyset$.
However, we can always run a minimal model program for the pair 
$(X_i,B_i-\epsilon_i S_i)$ for some prime component $S_i\subset \lfloor B_i\rfloor$
and $\epsilon_i$ small enough.
We argue that, after finitely many steps of this MMP, we are in one of the following scenarios:
\begin{itemize}
    \item the strict transform of $D_i$ will intersect the strict transform of $S_i$, or 
    \item we obtain a $\pp^1$-linking structure and both
    the strict transform of $D_i$ and $S_i$ are multi-sections for this bundle.
\end{itemize}
In the former case, we will perform adjunction to $S_i$ and replace $(X_i,B_i)$ with the obtained pair.
In the latter case, we will perform adjunction to a general fiber of the $\pp^1$-linking structure.
When performing adjunction,
we may replace $X_i$ with a variety 
of one dimension lower.
By performing this process for each pair
$(X_i,B_i)$ in the sequence, 
we obtain a new sequence 
$(Z_i,B_{Z_i})$ of bounded dimension
with coefficients on a DCC set $D(I)$.
In Lemma~\ref{lem:CY-coreg-to-dim}, we will prove that the sequence
${\rm coeff}_{D_i}(B_i)$ violating the ACC induces a sequence
of prime divisors $D_{Z_i}\subset Z_i$
so that
${\rm coeff}_{D_{Z_i}}(B_{Z_i})$ also violates
the ascending chain condition.
This leads to a contradiction of the usual global ascending chain condition~\cite{HMX14}.

\subsection*{Acknowledgements}
The authors would like to thank 
Stefano Filipazzi for many useful comments.
This project was initiated in the \href{https://web.math.princeton.edu/~jmoraga/Learning-Seminar-MMP}{Minimal Model Program Learning Seminar}. The first author received partial financial support by the NSF under János Kollár's grant number DMS-1901855.

\section{Preliminaries}

We work over an algebraically closed field $\kk$ of characteristic zero.
Unless otherwise stated, our varieties are normal and quasi-projective over the base field.
In this section, we introduce some preliminary results
regarding generalized pairs, singularities, 
adjunction, coregularity, and the global ascending chain condition.

\subsection{Generalized pairs and singularities}
In this subsection, we recall the basics about birational nef divisors, generalized pairs
and their singularities.

\begin{definition}
{\em 
For a normal quasi-projective variety $X$, consider the set of all birational projective morphisms $\phi:X_\phi \rightarrow X$, with $X_\phi$ normal. A {\em b-divisor} on $X$ is an element of the inverse limit:
\[{\rm Div}_b(X)=\varprojlim_{\phi} {\rm Div}(X_\phi).\]

Therefore a b-divisor $M$ can be written as a countable sum $\sum_i b_iV_i$, where each $V_i$ is a divisorial valuation of $\kk(X)$ such that any normal variety $Y$ birational to $X$ has only finitely many $V_i$'s realized as divisors in $Y$.

The {\em trace} of $M$ on a variety $Y$ birational to $X$ is defined as:
\[M_Y:=\sum_i b_iD_i,\]
where $i$ runs over all valuations $V_i$ with $c_Y(V_i)=D_i$ and ${\rm  codim}_{Y}D_i=1$.

A b-divisor on $X$ is {\em b-$\rr$-Cartier} if there exists a birational model $X'$ of $X$ and a $\rr$-cartier divisor $D_{X'}$, such that for any birational projective morphism $\phi:Y\rightarrow X'$, we have that $M_{Y}=\phi^*D_{X'}$.
The b-divisor 
$M$ is a {\em b-nef} divisor if $D_{X'}$ is nef.
In the previous context, 
we say that $X'$ is a model where $M$ descends
and we write $M'$ for the trace of $M$ on $X'$.
Note that, a model where $M$ descend can always be replaced by higher models by blowing-up.

Let $\psi\colon X'\rightarrow X$ be a model where $M$ descends.
Assume that $M$ is $\rr$-Cartier on $X$.
Assume that $M'$ is a nef $\rr$-Cartier divisor.
By the negativity lemma, we can write $\psi^*M=M'+E$, where $E$ is an effective exceptional divisor.
We say that $X\setminus \psi(E)$ is the locus on $X$ where $M$ {\em descends}
and
we say that $\psi(E)$ is the locus of $X$
where $M$ {\em does not descend}.
}
\end{definition}

\begin{definition}
{\em
A {\em generalized pair} $(X,B+M)$ is a triple, consisting of:

\begin{itemize}
    \item a normal quasi-projective variety $X$;
    \item an effective divisor $B$; and
    \item a b-nef divisor $M$;
\end{itemize}
such that $K_X+B+M$ is $\rr$-Cartier.
We call $B$ and $M$ the boundary and moduli parts, respectively.

For a generalized pair $(X,B+M)$ and a projective morphism $\pi:Y \rightarrow X$, we can write:
\[
\pi ^*(K_X+B+M)=K_Y+B_Y+M_Y.
\]
We will call $(Y,B_Y+M_Y)$ the {\em log pull-back} of $(X,B+M)$ to $Y$.
Note that $B_Y$ is uniquely determined by the previous equality.
}
\end{definition}

\begin{definition}
{\em
Let $(X,B+M)$ be a generalized pair and $\pi:Y \rightarrow X$ a birational projective morphism.
As above, we can write 
\[ K_Y+B_Y+M_Y=\pi ^{*} (K_X +B+M).\]
For a prime divisor $E$ on $Y$, we define the {\em generalized log discrepancy} of $(X,B+M)$ at $E$ to be:

\[a_E(X,B+M)=1-{\rm coeff}_E(B_Y).\]

We say that $(X,B+M)$ has {\em generalized log canonical singularities} (resp. {\em generalized kawamata log terminal singularities}) if $a_E(X,B+M) \geq 0$ (resp. $a_E(X,B+M)>0$) for any prime divisor $E$ over $X$. We write glc (resp. gklt) for short.

}
\end{definition}

\begin{definition}
{\em
A germ $(X;x)$ is said to be of {\em generalized klt type} (resp. {\em generalized lc type}) if there exists a boundary $B$ and a b-nef divisor $M$ on $X$, such that $(X,B+M)$ is generalized klt (resp. generalized lc).
}
\end{definition}

\begin{definition}
{\em
A {\em generalized log canonical place} (resp. {\em generalized non-klt place}) of a generalized pair $(X,B+M)$ is a prime divisor $E$ over $X$, for which $a_E(X,B+M)=0$ (resp. $a_E(X,B+M) \leq 0$).

A {\em generalized log canonical center} (resp. {\em generalized non-klt center}) of a generalized pair $(X,B+M)$ is the center on $X$ of a generalized log canonical place (resp. generalized non-klt place). 
We abbreviate the set of generalized lc centers of $(X,B+M)$ as ${\rm lcc}(X,B+M)$.

A generalized log canonical center that is minimal with respect to inclusion among generalized log canonical centers is said to be a {\em minimal generalized log canonical center}.
}
\end{definition}

\begin{definition}
{\em
A generalized log canonical pair $(X,B+M)$ is {\em generalized divisorially log terminal}, or {\em gdlt} for short, if there exists an open set $U \subset X$, such that:
\begin{itemize}
\item The coefficients of $B$ are less or equal than one;
\item the open set $U$ is smooth and $\lfloor B\rfloor|_U$ has simple normal crossing support;
\item all generalized non-klt centers of $(X,B+M)$ intersect $U$ and consist of strata of $\lfloor B \rfloor$; and 
\item the locus on $X$ where $M$ descends contains the generic points of each strata of $\lfloor B\rfloor$.
\end{itemize}
}
\end{definition}

\begin{definition}
{\em
Let $(X,B+M)$ be a generalized pair and $\phi:Y \rightarrow X$ be a log resolution of $(X,B)$.
Assume that $M$ descends on $Y$.
Let $S$ be the normalization of a component of $\lfloor B \rfloor$ and $S_Y$ its birational transform in $Y$. Let $(Y,B_Y+M_Y)$ be the log pull-back of $(X,B+M)$ and let:
\[K_{S_Y}+B_{S_Y}+M_{S_Y}=(K_Y+B_Y+M_Y)|_{S_Y},\]
with $B_{S_Y}=(B_Y-S_Y)| _{S_Y}$ and $M_{S_Y}=M_{Y}|_{S_Y}$.

Let $f$ be the induced projective birational morphism from $S_Y$ to $S$, $B_S=f_*B_{S_Y}$, and $M_S=f_*M_{S_Y}$.
Then the following equality is called {\em generalized adjunction}:
\[K_S+B_S+M_S=(K_{X}+B+M)|_S.\]
\em}
\end{definition}

\begin{definition}
{\em
Let $(X,B+M)$ be a generalized log canonical pair,
let $\Gamma\geq 0$ be an effective divisor,
and let $N$ be a b-nef divisor, such that 
$\Gamma+N$ is $\rr$-Cartier.
Then, the {\em generalized log canonical threshold}
of $\Gamma+N$ 
with respect to $(X,B+M)$ is:
\[ 
{\rm lct}(X,B+M; \Gamma+ N)=\sup \{t \in \rr \mid (X,B+M+t(\Gamma+N)) \text{ is generalized log canonical}\}. 
\]
In the previous generalized pair,
$B+t\Gamma$ is the boundary divisor, 
and $M+tN$ is the moduli divisor.
We define ${\rm LCT}_c(I,J)$ to be the set of all 
${\rm lct}(X,B+M;\Gamma+N)$, such that the following conditions are satisfied:
\begin{enumerate}
    \item the coefficients of $B$ are in $I$;
    \item the coefficients of $\Gamma$ are in $J$;
    \item we can write $M'=\sum_j\lambda_j M'_j$
    where $\lambda_j\in I$ and each $M'_j$ is nef Cartier;
    \item we can write 
    $N'=\sum_j\mu_j N'_j$ where $\mu_j\in J$ and each $N'_j$ is nef Cartier; and
    \item the coregularity of 
    $(X,B+M+t(\Gamma+N))$ is at most $c$.
\end{enumerate}
The previous set is called the set of log canonical thresholds with indices in $I$ and $J$ and coregularity bounded above by $c$.
\em}
\end{definition}

\subsection{Generalized dlt modifications}
In this subsection, we recall the existence of generalized dlt modifications
and results regarding the coefficients obtained after adjunction.
Some of the results in this subsection are analogous to those in~\cite{HMX14,BZ16}.
In some cases, we give a short proof for the sake of completeness.

\begin{definition}
{\em
Let $(X,B+M)$ be a generalized pair, then a generalized pair $(Y,B_Y+M_Y)$ is a {\em generalized dlt modification} of $(X,B+M)$ if $(Y,B_Y+M_Y)$ is a  $\qq$-factorial gdlt pair and there exists a projective birational morphism $\phi:Y \rightarrow X$ satisfying the following conditions:
\begin{enumerate}
    \item the exceptional locus 
    ${\rm Ex}(\phi)$ is purely divisorial;
    \item all the divisors on $Y$ exceptional over $X$ satisfy
    $a_E(X,B+M)=0$; and 
    \item we have that 
    $K_Y+B_Y+M_Y=\phi^*(K_X+B+M)$.
\end{enumerate}
}
\end{definition}

The following lemma is well-known to the experts (see, e.g.~\cite[Theorem 2.9]{FS20}).

\begin{lemma}
Let $(X,B+M)$ be a generalized log canonical pair.
Then there exists a generalized dlt modification
$(Y,B_Y+M_Y)$ of $(X,B+M)$.
\end{lemma}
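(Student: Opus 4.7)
The plan is to produce $(Y,B_Y+M_Y)$ by running a suitable MMP on a log resolution of $(X,B)$. First I take a log resolution $\phi\colon W\to X$ of $(X,\Supp(B))$ which is also a model on which $M$ descends; this exists by Hironaka's theorem together with the definition of a b-nef divisor. Set
\[
B_W := \phi_{*}^{-1}B + \sum_{E} E,
\]
where the sum runs over all prime $\phi$-exceptional divisors, so that $(W,B_W+M_W)$ is $\qq$-factorial, gdlt, and glc (the coefficients of $B_W$ lie in $[0,1]$ since $(X,B+M)$ is glc). Since every prime divisor $E$ over $X$ satisfies $a_E(X,B+M)\geq 0$ by the glc hypothesis, we may write
\[
K_W+B_W+M_W = \phi^{*}(K_X+B+M) + F, \qquad F := \sum_{E} a_E(X,B+M)\,E,
\]
where the second sum is over $\phi$-exceptional primes $E$; in particular $F$ is an effective $\phi$-exceptional $\rr$-divisor whose support is exactly the set of $\phi$-exceptional divisors with positive generalized log discrepancy.

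Next I run a $(K_W+B_W+M_W)$-MMP over $X$ with scaling of a $\phi$-ample divisor. Since $K_W+B_W+M_W$ is $\rr$-linearly equivalent over $X$ to the effective $\phi$-exceptional divisor $F$, every extremal ray contracted by this MMP intersects $F$ negatively; only prime divisors in $\Supp(F)$ can be contracted, while divisors with $a_E(X,B+M)=0$ are never touched. Termination in this generalized setting follows from the existence of relative minimal models for effective glc pairs as developed in~\cite{BZ16}. Let $\psi\colon Y\to X$ denote the resulting birational morphism, so that $\psi_{*}F=0$.

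It remains to verify that $(Y,B_Y+M_Y)$, with $B_Y$ and $M_Y$ the traces on $Y$, is a generalized dlt modification. By the negativity lemma $K_Y+B_Y+M_Y=\psi^{*}(K_X+B+M)$, and every $\psi$-exceptional prime divisor has generalized log discrepancy zero, giving conditions $(1)$--$(3)$ of the definition. For the gdlt property, $\qq$-factoriality is preserved across MMP steps, and the log smooth open subset $U\subseteq Y$ is obtained by restricting the log smooth structure of $(W,B_W+M_W)$ to the open locus on which $W\drar Y$ is an isomorphism; on this open set $M$ still descends, and every generalized non-klt center of $(Y,B_Y+M_Y)$ corresponds to a stratum of $\lfloor B_W\rfloor$ whose image meets $U$. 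The main obstacle I anticipate is the termination of the MMP in the generalized setting, since termination of flips for generalized pairs is not known in full generality; however, the MMP above is designed so that it contracts only components of an effective exceptional divisor over a base, and this restricted form terminates by the MMP with scaling established in~\cite{BZ16}.
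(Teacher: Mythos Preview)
The paper does not supply a proof of this lemma; it simply records it as well known and cites \cite[Theorem 2.9]{FS20}. Your argument follows exactly the standard strategy found in that reference and elsewhere: pass to a log resolution on which $M$ descends, replace the boundary by the strict transform plus the reduced exceptional divisor, and run a relative MMP to contract the exceptional components of positive generalized log discrepancy.

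Two points in your sketch deserve tightening if you want a self-contained proof. First, termination: \cite[Lemma 4.4]{BZ16} concerns the non--pseudo-effective case and does not directly apply, since $K_W+B_W+M_W\equiv_X F\geq 0$. The correct mechanism is special termination for generalized dlt pairs: every component of $F$ is $\phi$-exceptional and hence lies in $\lfloor B_W\rfloor$, so once the flipping loci avoid $\lfloor B_W\rfloor$ there are no $(K_W+B_W+M_W)$-negative curves left and the MMP stops. Second, the gdlt property of the output: your assertion that every generalized non-klt center of $(Y,B_Y+M_Y)$ meets the locus where $W\dashrightarrow Y$ is an isomorphism needs the standard fact that in an MMP the log discrepancy $a_E$ strictly increases unless the map is a local isomorphism at the center of $E$. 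Since every generalized lc place of $(Y,B_Y+M_Y)$ already has $a_E=0$ for the log-smooth pair $(W,B_W+M_W)$, equality forces $W\dashrightarrow Y$ to be an isomorphism near the generic point of each such center, which yields the required snc open set (and the descent of $M$ there). With these two points made explicit your proof is complete and matches the argument in \cite{FS20}.
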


\begin{definition}\label{def:DI}
{\em
For a set $I \subset \rr$, we define:
\[I^+=\{0\} \cup 
\left\{i \in [0,1] \mid i=\sum_{k=1}^{\ell} j_k, \text{ for some $j_1,\ldots,j_{\ell} \in I$} \right\},\]
and 
\[D(I)=\left\{a \leq 1 \mid a=\frac{m-1+f}{m}, m\in \zz_{>0}, f \in I^+\right\}.\]
}
\end{definition}

The following lemmas are straightforward from the previous definitions, see for example \cite[4.4]{MP04}

\begin{lemma}\label{lem:DI-DCC}
Let $I$ be a set of nonnegative real numbers satisfying the DCC, then the sets  $D(I)$ and $I^+$
satisfy the DCC.
\end{lemma}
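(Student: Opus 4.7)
The plan is to first establish the DCC for $I^+$, then deduce the DCC for $D(I)$ by showing that in any strictly decreasing sequence in $D(I)$ the denominators $m$ must be bounded.

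\textbf{DCC for $I^+$.} Suppose for contradiction that $a_1 > a_2 > \cdots$ is a strictly decreasing sequence in $I^+$. After discarding at most one term, I may assume $a_n > 0$ for all $n$. Since $I$ satisfies DCC, the set $I \cap (0, \infty)$ admits a positive minimum $\delta$. Writing each $a_n \leq 1$ as a sum of positive elements of $I$, the number of summands is at most $\lfloor 1/\delta \rfloor$. After passing to a subsequence, I may assume the length $\ell$ is constant, so $a_n = j_{n,1} + \cdots + j_{n,\ell}$ with $j_{n,k} \in I$. By the standard observation that any sequence in a DCC set admits a non-decreasing subsequence (infinite Ramsey applied to the relation $a_i \leq a_j$, combined with DCC ruling out infinite strictly decreasing subsequences), I iteratively refine to a subsequence in which $j_{n,k}$ is non-decreasing in $n$ for every fixed $k$. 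Then $a_n = \sum_k j_{n,k}$ is non-decreasing, contradicting strict decrease.

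\textbf{DCC for $D(I)$.} Assume $a_1 > a_2 > \cdots$ is strictly decreasing in $D(I)$, with $a_n = (m_n - 1 + f_n)/m_n$, $m_n \in \zz_{>0}$, $f_n \in I^+$. For every $n \geq 2$ we have $a_n \leq a_2 < a_1 \leq 1$, so $1 - a_2 > 0$. From $1 - a_n = (1 - f_n)/m_n \leq 1/m_n$ it follows that
\[
m_n \leq \frac{1}{1 - a_2} \quad \text{for all } n \geq 2,
\]
so the denominators are bounded. After passing to a subsequence with $m_n$ constant equal to some $m \in \zz_{>0}$, strict decrease of $a_n = (m - 1 + f_n)/m$ is equivalent to strict decrease of $f_n$ in $I^+$, contradicting the first part.

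\textbf{Main obstacle.} The only delicate point is establishing that sums in $I^+$ have bounded length: this uses the positive lower bound on $I \setminus \{0\}$ guaranteed by DCC, combined with the constraint $a \leq 1$ built into the definition of $I^+$. Once this is in hand, the coordinate-wise non-decreasing subsequence trick closes the argument for $I^+$, and the $D(I)$ step reduces immediately via the identity $a = 1 - (1-f)/m$.
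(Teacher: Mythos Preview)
Your proof is correct. The paper does not give its own argument for this lemma; it simply states the result as straightforward and refers to~\cite[4.4]{MP04}, so your write-up supplies exactly the details the paper leaves to the reader. One cosmetic remark: in your $I^+$ step the phrase ``after discarding at most one term'' is unnecessary, since in an infinite strictly decreasing sequence of nonnegative reals every term is automatically positive; and implicitly you use that $I\cap(0,\infty)\neq\emptyset$, which is forced once any $a_n>0$ exists. Neither point affects the validity of the argument.
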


\begin{lemma}
Let $I$ be a set of nonnegative real numbers, then we have that:
\[D(D(I))=D(I)\cup \{1\}.\]
\end{lemma}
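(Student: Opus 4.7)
The plan is to show the two inclusions separately. For $D(I)\cup\{1\}\subseteq D(D(I))$, note that every $\alpha\in D(I)$ lies in $D(I)^+$ as a single summand, so $\alpha=(1-1+\alpha)/1$ exhibits $\alpha\in D(D(I))$. The point $1$ itself belongs to $D(D(I))$ because $1/2=(2-1+0)/2\in D(I)$ (using $0\in I^+$, which always holds), whence $1=1/2+1/2\in D(I)^+$, and finally $1=(1-1+1)/1\in D(D(I))$.

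For the reverse inclusion, fix $\alpha=(m-1+f)/m\in D(D(I))$ with $m\in\zz_{>0}$ and $f\in D(I)^+$, and decompose $f=\sum_{k=1}^\ell g_k$ where each $g_k=(n_k-1+h_k)/n_k$ with $n_k\in\zz_{>0}$ and $h_k\in I^+$. Partition the indices according to whether $n_k=1$ (in which case $g_k=h_k\in I^+$) or $n_k\geq 2$; let $s$ be the number of indices of the first type, and set $H:=\sum_{k\leq s}h_k\in I^+$. The key arithmetic step is that $f\leq 1$ combined with $g_k=1-(1-h_k)/n_k$ yields the two-sided estimate $(\ell-s)-1+H\leq\sum_{k>s}(1-h_k)/n_k\leq(\ell-s)/2$, which forces $\ell-s\leq 2$.

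The three remaining cases are then direct. If $\ell-s=0$, then $f=H\in I^+$, so $\alpha=(m-1+H)/m\in D(I)$. If $\ell-s=1$, write $n=n_{s+1}$ and $h=h_{s+1}$; clearing denominators gives $\alpha=(mn-1+nH+h)/(mn)$, where $nH+h$ is a finite sum of elements of $I$ that is bounded by $1$ (since $\alpha\leq 1$), hence lies in $I^+$, and $\alpha\in D(I)$. If $\ell-s=2$, the inequality above specializes to $1+H\leq 1$, forcing $H=0$ together with $(1-h_k)/n_k=1/2$ for $k>s$, i.e.\ $n_{s+1}=n_{s+2}=2$ and $h_{s+1}=h_{s+2}=0$; then $f=1$ and $\alpha=1$.

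The only step requiring genuine care is the arithmetic bound $\ell-s\leq 2$, since without it the set $D(D(I))$ could in principle contain elements obtained from arbitrarily long sums in $D(I)^+$. Everything else is algebraic manipulation aimed at rewriting $\alpha$ in the canonical form $(M-1+F)/M$ with $F\in I^+$, or identifying it as $1$.
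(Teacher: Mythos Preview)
Your proof is correct. The paper does not give its own argument for this lemma; it simply declares it ``straightforward from the previous definitions'' and refers to~\cite[4.4]{MP04}. What you have written is therefore a complete self-contained proof where the paper provides only a citation.

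A couple of minor comments. First, the case $f=0$ (i.e.\ $\ell=0$) is not addressed explicitly, but it falls under your case $\ell-s=0$ with $H=0\in I^+$, so there is no gap. Second, in the case $\ell-s=1$ you assert $nH+h\in I^+$; this is fine, but it is worth noting that the bound $nH+h\le 1$ is exactly the condition $\alpha\le 1$, which is part of the definition of $D(D(I))$, so there is no circularity. With these remarks the argument is clean.
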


\begin{definition}\label{def:D_dI}
{\em
Let $I$ be a set of nonnegative real numbers.
Let $d$ be a real number in the interval $[0,1]$.
We define:
\[
D_d(I) = \left\{a \leq 1 \mid a=\frac{m-1+f+kd}{m}, k,m \in \zz_{>0}, f\in I^+ \right\} .
\]
}
\end{definition}

\begin{lemma}\label{lem:D_d-induction}
Let $I$ be a set of nonnegative real numbers.
Let $d_1 \in D_d(I)$, 
then we have that
$D_{d_1}(I)\subseteq D_d(I)$.
\end{lemma}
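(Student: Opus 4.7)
The plan is to prove the inclusion by a direct substitution: express an arbitrary element of $D_{d_1}(I)$ in the defining form of $D_d(I)$.

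First I would fix an arbitrary $a \in D_{d_1}(I)$ and write
\[
a = \frac{m - 1 + f + k d_1}{m}, \qquad m, k \in \zz_{>0},\ f \in I^+,\ a \leq 1.
\]
Using the hypothesis $d_1 \in D_d(I)$, I would also write
\[
d_1 = \frac{m_1 - 1 + f_1 + k_1 d}{m_1}, \qquad m_1, k_1 \in \zz_{>0},\ f_1 \in I^+.
\]

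Next, I would substitute the second expression into the first and clear denominators. After placing everything over the common denominator $mm_1$, the constant term regroups as $mm_1 - 1$, and the remaining terms collect as
\[
a = \frac{M - 1 + F + K d}{M}, \qquad M := mm_1,\ K := kk_1,\ F := (k-1)(m_1-1) + m_1 f + k f_1.
\]
Since $M, K \in \zz_{>0}$, this is already in the shape required by the definition of $D_d(I)$, so the only thing left to check is that $F \in I^+$.

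Finally, I would verify this last claim. The bound $F \leq 1$ is automatic from $a \leq 1$ together with $Kd \geq 0$. Because $f, f_1 \in I^+$, the nonnegative integer multiples $m_1 f$ and $k f_1$ are themselves finite sums of elements of $I$ and so lie in $I^+$ as long as they remain $\leq 1$. The main obstacle I anticipate is the integer contribution $(k-1)(m_1-1)$: when $k = 1$ or $m_1 = 1$ it vanishes, and $F \in I^+$ follows immediately; otherwise $(k-1)(m_1-1) \geq 1$, but then the bound $F \leq 1$ pins down $F = 1$ and forces $f = f_1 = 0$, collapsing to a degenerate boundary case that must be dispatched directly. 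Apart from this bookkeeping at the extreme value $F = 1$, the argument is a one-line algebraic manipulation.
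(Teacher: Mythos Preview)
Your approach is identical to the paper's: substitute the expression for $d_1$ and regroup over the common denominator, arriving at exactly the same formula (the paper writes it as $\frac{mn-1+(k_1-1)(n-1)+fn+k_1g+k_1kd}{mn}$ with slightly different variable names) and then declares this to lie in $D_d(I)$ without further comment. Your additional check that the new numerator term lies in $I^+$ is more careful than what the paper records; the residual edge case you flag, $(k-1)(m_1-1)\geq 1$, forces $Kd=0$ and $a=1$, so it only occurs when $d=0$, which is outside the lemma's actual use in the paper.
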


\begin{proof}
Any element in $D_{d_1}(I)$ is of the form:
\begin{align*}
a &= \frac{m-1+f+k_1d_1}{m},
\end{align*}
where $m,k_1\in \zz_{>0}$
and 
$f\in I^+$.
Then, we can write
\begin{align*} 
a&=\frac{m-1+f+k_1\left(\frac{n-1+g+kd}{n}\right)}{m}\\ &=\frac {mn-n+fn+k_1n-k_1+k_1g+k_1kd}{mn} \\ &=\frac{mn-1+(k_1-1)(n-1)+fn+k_1g+k_1kd}{mn},  
\end{align*}
which is in $D_d(I)$.
\end{proof}

The following lemma is similar to~\cite[Lemma 5.2]{HMX14}.

\begin{lemma}\label{lem:finite-to-finite}
Let $J$ be a finite set and $I$ a set of nonnegative real numbers satisfying the DCC.
Then, the set 
\[
I_1 = 
\left\{
i\in I  \mid \frac{m-1+ki+f}{m}\in J \cap [0,1] \text{ for some }m,k\in \zz_{>0}, f\in I^+
\right\}
\] 
is also finite.
\end{lemma}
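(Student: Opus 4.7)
The plan is to rearrange the defining equation, extract a constant value of $ki+f$ along a subsequence, and then use DCC of $I^+$ (guaranteed by Lemma~\ref{lem:DI-DCC}) to reach a contradiction.

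First I would observe that if $i\in I_1$, then the equation $j=\frac{m-1+ki+f}{m}$ with $j\in J\cap[0,1]$ rearranges to
\[
ki+f = 1-m(1-j),
\]
which forces $0\leq ki+f\leq 1$. For fixed $j\in J$ with $j<1$, the admissible $m$ satisfy $m\leq 1/(1-j)$, so there are only finitely many of them; and if $1\in J$, the case $j=1$ just gives $ki+f=1$. Since $J$ is finite, we conclude that there is a finite set $S\subseteq[0,1]$ (depending only on $J$) such that for every $i\in I_1$ one can choose $k\in\zz_{>0}$, $f\in I^+$ and $s\in S$ with $ki+f=s$.

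Next I would argue by contradiction: suppose $I_1$ is infinite. Since $I\subseteq\rr_{\geq 0}$ satisfies the DCC, it is well-ordered in the induced order, and hence $I_1$ contains a strictly increasing sequence $i_1<i_2<\cdots$. For each $n$, pick $k_n,m_n,f_n,j_n$ witnessing $i_n\in I_1$. By passing to a subsequence I may assume $j_n=j$ is constant (as $J$ is finite), and (in the case $j<1$) also $m_n=m$ constant. Thus $k_ni_n+f_n=c$ for a fixed $c\in[0,1]$ and all $n$. If $c=0$ we get $i_n=0$ for all $n$, contradicting that the $i_n$ are strictly increasing. So $c>0$, and then (after discarding finitely many terms so that $i_n\geq i_1>0$) the bound $k_n\leq c/i_1$ allows a further subsequence on which $k_n=k$ is constant.

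At this point I would conclude: the equation $f_n=c-ki_n$, together with the fact that $i_n$ is strictly increasing, shows that $f_n$ is a strictly decreasing sequence in $I^+$. But by Lemma~\ref{lem:DI-DCC}, $I^+$ satisfies the DCC, which is a contradiction. The main (mild) obstacle is just bookkeeping through the subsequence extractions; the real content is the chain $J\text{ finite}\leadsto ki+f\text{ finite values}\leadsto$ a strictly decreasing sequence in the DCC set $I^+$.
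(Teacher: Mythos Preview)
Your proof is correct and follows essentially the same route as the paper's: both use finiteness of $J$ to bound $m$ (equivalently, to force $ki+f$ into a finite set), then bound $k$, and finally appeal to the DCC of $I^+$ to finish. The only difference is organizational---the paper argues directly that each set $I_{m,\ell,k}=\{i:\ i=(m\ell-m+1-f)/k,\ f\in I^+\}$ satisfies both ACC and DCC and is therefore finite, while you reach the same conclusion by contradiction via a strictly increasing sequence $i_n$ producing a strictly decreasing $f_n\in I^+$.
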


\begin{proof}
We can assume $i\neq 0$.
We have that $ki \leq 1$, so no element of $i\in I_1$ can be greater than one, hence we can assume $I\subset [0,1]$.
As $I$ satisfies the DCC, we have that $i$ is bounded below, therefore as $ik\leq 1$, we have that $k$ is bounded above, so it can only take  finitely many values.

Call $\ell=\frac{m-1+ki+f}{m}.$ If $\ell=1$, then we can assume $m=1$, otherwise we have that $\ell$ is bounded above by a number smaller than one, since $J$ is finite. Therefore $1-\frac{1}{m}$ is bounded above, and $m$ can only take finitely many values.

For any fixed $m,\ell,k$, the set:
\[
I_{m,\ell,k}:=\left \{i\mid i=\frac{m\ell-m+1-f}{k}, f\in I^+\right\}
\]
is a subset of $I$. Hence $I_{m,\ell,k}$ satisfies the DCC. The set $I_{m,\ell,k}$ also satisfies the ACC as the only variable $f\in I$ satisfies the DCC. Therefore, $I_{m,\ell,k}$ is a finite set. As $m,\ell,k$ can only take finitely many possible values, the set $I_1$ is also finite.   
\end{proof}

The following lemma follows from the proof of \cite[Proposition 4.9]{BZ16}.
It allows us to control the coefficients of the boundary part
and the moduli part under adjunction to a divisorial log canonical center.

\begin{lemma}\label{lem:coeff-adj}
Let $I$ be a set of nonnegative real numbers satisfying the DCC.
Let $(X,B+M)$ be a generalized dlt pair, with coefficients in $I$. 
Let $d$ be a positive real number for which either:
\begin{itemize}
    \item there exists a prime component $D$ on $X$ with
    ${\rm coeff}_D(B)=d$, or
    \item we can write $M'=\sum_j \lambda_j M'_j$ with $\lambda_j$ nonnegative real numbers, each $M'_j\not\equiv 0$ is nef Cartier, and
    $\lambda_j=d$ for some $j$.
\end{itemize}
Let $S$ be the normalization
of a component of $\lfloor B\rfloor$
with either $S\cap D\neq \emptyset$ or
$S\cap M_j\neq \emptyset$ with $\lambda_j=d$.
Let $B_S$ and $M_S$
be the boundary and moduli part
defined by the generalized adjunction.
Then, we have that either:
\begin{itemize}
    \item[(i)] the coefficients of $B_S$ belong to $D(I)$,
    \item[(ii)] for any prime component $Q$ of $D\cap S$ we have that ${\rm coeff}_Q(B_S)\in D_d(I)$, or 
    \item[(iii)] we can write 
    $M'_S=\sum_j \eta_j M'_{S,j}$ with $\eta_j$ nonnegative real numbers, each $M'_{S,j}\not\equiv 0$ is nef Cartier, and 
    $\eta_j=d$ for some $j$.
\end{itemize}
\end{lemma}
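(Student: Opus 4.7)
The plan is to follow the strategy of the proof of \cite[Proposition 4.9]{BZ16}. First, I would pass to a sufficiently high log resolution $\phi\colon Y\to X$ of $(X,B)$ on which $M$ descends, so that $M_Y=\phi^*M'$. Write $\phi^*(K_X+B+M)=K_Y+B_Y+M_Y$; by the gdlt hypothesis on $(X,B+M)$, the birational transform $S_Y$ of $S$ appears with coefficient one in $B_Y$. Applying adjunction on $Y$ gives $K_{S_Y}+B_{S_Y}+M_{S_Y}=(K_Y+B_Y+M_Y)|_{S_Y}$, with $B_{S_Y}=(B_Y-S_Y)|_{S_Y}$ and $M_{S_Y}=M_Y|_{S_Y}$, and pushing forward by $f\colon S_Y\to S$ yields $B_S=f_*B_{S_Y}$ and $M_S=f_*M_{S_Y}$.

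Next, I would analyze the coefficient of each prime divisor $Q$ of $B_S$. Let $Q_Y\subset S_Y$ be its birational transform; there is a finite set of prime divisors $\{P_i\}$ on $Y$, each a component of $B_Y$, such that the images of the $P_i\cap S_Y$ under $f$ contain $Q$. A standard computation of the different, analogous to the argument in \cite[Proposition 4.9]{BZ16}, shows that
\[
{\rm coeff}_Q(B_S)=\frac{m-1+\sum_i k_i b_i}{m},
\]
where $m\in\zz_{>0}$ is a local ramification index along $Q_Y$, the $k_i\in\zz_{>0}$ are intersection multiplicities of $P_i$ with $S_Y$ along $Q_Y$, and the $b_i\in I$ are the coefficients of the components of $B$ on $X$ corresponding to the $P_i$.

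To conclude, I would split into cases according to the hypothesis. In the first scenario ($d={\rm coeff}_D(B)$): if $Q$ is not a component of $D\cap S$, then no $P_i$ contributing to $Q_Y$ is the strict transform of $D$, so $\sum_i k_i b_i\in I^+$ and ${\rm coeff}_Q(B_S)\in D(I)$, giving (i). If instead $Q$ is a component of $D\cap S$, the strict transform of $D$ appears among the $P_i$'s with some $b_{i_0}=d$ and some $k_{i_0}\in\zz_{>0}$; isolating this term writes ${\rm coeff}_Q(B_S)=\frac{m-1+f+k_{i_0}d}{m}$ with $f\in I^+$, so ${\rm coeff}_Q(B_S)\in D_d(I)$, giving (ii). In the second scenario ($d=\lambda_j$ a moduli coefficient), conclusion (i) holds for $B_S$ by the same formula with $\sum_i k_i b_i\in I^+$; and the decomposition $M_Y=\sum_j \lambda_j \phi^*M'_j$ restricts to $M_{S_Y}=\sum_j \lambda_j (\phi^*M'_j)|_{S_Y}$, which descends on a suitable birational model of $S$ to a decomposition $M'_S=\sum_j \eta_j M'_{S,j}$ with $\eta_j=\lambda_j$ and each $M'_{S,j}$ nef Cartier. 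The hypothesis $S\cap M_j\neq\emptyset$ for the index with $\lambda_j=d$ ensures $M'_{S,j}\not\equiv 0$, giving (iii).

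The main obstacle will be the different computation: verifying carefully that $m$ is a positive integer, and that the intersection multiplicity $k_{i_0}$ of the strict transform of $D$ with $S_Y$ along $Q_Y$ is a positive integer whenever $Q$ is a component of $D\cap S$. This relies on log smoothness of $Y$ and the gdlt structure of the pair near $S$, so that the components of $B_Y$ meet $S_Y$ transversally. A secondary subtlety is ensuring in (iii) that the summands $M'_{S,j}$ descend as nef Cartier divisors on a birational model of $S$ rather than only as b-divisors on $S_Y$; this is handled by possibly replacing $Y$ with a higher log resolution on which every $\phi^*M'_j|_{S_Y}$ descends simultaneously.
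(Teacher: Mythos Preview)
Your approach parallels the paper's, and the case analysis you outline for (i), (ii), (iii) is sound. However, there is a genuine gap in your central formula. You write ${\rm coeff}_Q(B_S)=\frac{m-1+\sum_i k_i b_i}{m}$ with the $b_i$ being coefficients of components of $B$ on $X$; but in generalized adjunction the boundary $B_S$ also absorbs the failure of $M$ to descend on $X$. Concretely, if $\pi\colon X'\to X$ is a model where each $M'_j$ descends and one writes $\pi^*(mM_j) = mM'_j + E_j$ with $E_j\ge 0$ exceptional and Cartier (via the negativity lemma, using that $mM_j$ is Cartier near $Q$), then each $\lambda_j E_j$ contributes to the log pullback and hence to $B_S$. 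The correct formula, as in the paper, is
\[
{\rm coeff}_Q(B_S)=1-\frac{1}{m}+\frac{\sum_i m_i b_i + \sum_j \lambda_j\,{\rm mult}_Q(E_j)}{m},
\]
which still lands in $D(I)$ (resp.\ $D_d(I)$) since the $\lambda_j\in I$ by hypothesis. So your conclusions survive, but the formula you stated is the classical different and is incomplete in the generalized setting.

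There is also a framing confusion: you set up the computation on a log resolution $Y$, yet the index $m$ you invoke is identically $1$ on $Y$, where everything is snc and intersections are transversal. The formula with nontrivial $m$ arises from working directly on $X$ after localizing at the generic point of $Q$ and reducing to the case $\dim X=2$ with $X$ $\qq$-factorial; this is exactly what the paper does, invoking Shokurov's surface adjunction \cite[Corollary~3.10]{Sho92} for the classical part $B'_S$ and then adding the moduli correction above. You should reorganize along those lines rather than mixing the log-resolution setup with the surface formula.
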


\begin{proof}
Let $(S,B_S+M_S)$ be the pair
obtained by generalized adjunction of $(X,B+M)$ to $S$ (see, e.g.,~\cite[Definition 4.7]{BZ16}).
Condition (iii) follows from the definition of generalized adjunction.
Hence, we need to prove the first and second statement.
Since this statement only depends on codimension one points of $S$, 
we may assume that $X$ has dimension $2$
and it is $\qq$-factorial.
By~\cite[Corollary 3.10]{Sho92}, we can write 
\[
K_S+B'_S = (K_X+B)|_S,
\] 
where the coefficients of 
$B'_S$ belong to $D(I)$.
More precisely, if $Q$ is a prime divisor of $S$, we can write
\begin{equation}
\label{eq:coeff-under-adj}
{\rm coeff}_P(B'_S)=1-\frac{1}{m}+\frac{\sum_i m_i\lambda_i}{m},
\end{equation} 
where the $m$ is the orbifold index of $X$ at $Q$, 
the $\lambda_i$'s belong to $I$,
and $m_i$'s are positive integers.
In the previous formula, 
$\lambda_i$ is the coefficient in $B$ of a prime divisor $P$ on $X$ 
amd $m_i$ is the multiplicity of $P$ at $Q$.
Let $M_j$ be the push-forward of $M'_j$ to $X$.
Then, we have that
$mM=\sum_j \lambda_j mM'_j$.
Each $mM'_j$ is Cartier on a neighborhood of $Q$.
Let $\pi\colon X'\rightarrow X$ be the projective birational morphism.
By the negativity lemma,
we can write 
$\pi^*(mM) = \sum_j \lambda_jmM_j + \lambda_j E_j$ where each $E_j$ is Cartier.
Hence, we have that 
\[
{\rm coeff}_P(B_S)= 
1-\frac{1}{m}+\frac{\sum_i m_i\lambda_i + \sum_j \lambda_j{\rm mult}_Q(E_j)}{m}.
\] 
This finishes the first statement.

Finally, if there is a prime component $D$ of $X$
for which ${\rm coeff}_D(B)=d$, then for any prime $Q$ of $D\cap S$ we have ${\rm coef}_Q(B'_S)\in D_d(I)$ by~\eqref{eq:coeff-under-adj}.
Thus, we have that
${\rm coeff}_Q(B_S)\in D_d(I)$ as well.
This finishes the proof of the second statement.
\end{proof} 

The following lemma allows to keep track of a prime divisor $D$ when passing to a dlt modification.

\begin{lemma}\label{lem:dlt-intersection}
Let $(X,B+M)$ be a generalized log canonical pair.
Let $D$ be a prime divisor on $X$ which satisfies:
\[
{\rm Supp}(D)\cap {\rm lcc}(X,B+M) \neq \emptyset.
\]
Let $(Y,B_Y+M_Y)$ be a generalized dlt modification of $(X,B+M)$.
Let $D_Y$ be the strict transform of $D$.
Then, we have that 
\[
D_Y \cap \lfloor B_Y \rfloor \neq \emptyset.
\]
In particular, there exists 
a prime component
$S \subset \lfloor B_Y \rfloor$  with $D_Y \cap S \neq \emptyset$.
\end{lemma}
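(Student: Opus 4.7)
The plan is to transport the hypothesis, that ${\rm Supp}(D)$ meets some generalized lc center of $(X, B+M)$, across the crepant morphism $\phi \colon Y \to X$ by identifying a preimage point in ${\rm Supp}(\lfloor B_Y \rfloor)$ that also sits in $D_Y$. By hypothesis one may pick $x \in {\rm Supp}(D)$ lying in some generalized lc center $W$ of $(X, B+M)$. Since $D$ is not $\phi$-exceptional, the induced morphism $\phi|_{D_Y} \colon D_Y \to D$ is surjective, so there exists $y \in D_Y$ with $\phi(y) = x$; the goal is then to show that $y \in {\rm Supp}(\lfloor B_Y \rfloor)$.

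The main step is to split into two cases according to whether $y$ lies in the $\phi$-exceptional locus. If $y$ is outside the exceptional locus, then $\phi$ is a local isomorphism around $y$ and the two generalized pairs agree on a neighborhood of $y$, so $y$ lies in a generalized non-klt center of $(Y, B_Y + M_Y)$; the third bullet in the definition of gdlt then identifies every such center with a stratum of $\lfloor B_Y \rfloor$, giving $y \in {\rm Supp}(\lfloor B_Y \rfloor)$. If instead $y$ is in the exceptional locus, condition $(1)$ in the definition of a generalized dlt modification places $y$ on some $\phi$-exceptional prime divisor $E$, and condition $(2)$ guarantees that $E$ is a generalized lc place of $(X, B+M)$ and therefore appears with coefficient one in $B_Y$; thus $E \subseteq \lfloor B_Y \rfloor$ and again $y \in {\rm Supp}(\lfloor B_Y \rfloor)$.

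Combining the two cases, $y$ lies in $D_Y \cap {\rm Supp}(\lfloor B_Y \rfloor)$, so there is a prime component $S$ of $\lfloor B_Y \rfloor$ with $y \in S \cap D_Y$, which is the desired conclusion. I do not expect a serious obstacle: the only facts used beyond the given definitions are the exhaustiveness of the two cases above and the preservation of the non-klt locus under a local isomorphism, both of which are formal consequences of the axioms of a generalized dlt modification together with the invariance of generalized log discrepancies under crepant birational maps.
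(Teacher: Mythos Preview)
Your proposal is correct and essentially the same as the paper's proof. Both arguments pick a point of $D$ lying on a log canonical center, lift it to $D_Y$, and split into the two cases ``the lift lies in ${\rm Ex}(\phi)$'' versus ``it does not'' (the paper phrases this as $\dim \phi^{-1}(p)\geq 1$ versus $\phi^{-1}(p)$ a point), invoking in the first case that every $\phi$-exceptional divisor appears in $\lfloor B_Y\rfloor$ and in the second that the non-klt locus of a gdlt pair is exactly $\lfloor B_Y\rfloor$.
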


\begin{proof}
Let $\phi:Y \rightarrow X$ be the dlt modification. 
Let $p\in D\cap {\rm lcc}(X,B+M)$. By definition we have that $D_Y\cap \phi^{-1}(p)\neq \emptyset$. 
If ${\rm dim } (\phi^{-1}(p)) \geq 1$, we have that $\phi^{-1}(p)$ is contained in an exceptional divisor, therefore $\phi^{-1}(p) \subset \lfloor B_Y\rfloor$, as $\phi$ is a dlt modification. Hence $\emptyset \neq D_Y\cap \phi^{-1}(p) \subset D_Y \cap \lfloor B_Y \rfloor$.
Otherwise $\phi^{-1}(p)$ is a point, then $\phi^{-1}(p)\in {\rm lcc}(Y,B_Y+M_Y)=\lfloor B_Y\rfloor$, hence $\phi^{-1}(p)\in D_Y \cap \lfloor B_Y \rfloor$.
\end{proof}

\subsection{Regularity and coregularity}
In this subsection, we introduce the regularity
and coregularity.
We prove some lemmas regarding the coregularity.

\begin{definition}\label{def:minimal-dlt-center}
{\em 
Let $(X,B+M)$ be a generalized log canonical pair.
A {\em minimal dlt center} of $(X,B+M)$
is a minimal log canonical center of any generalized
dlt modification of $(X,B+M)$.
}
\end{definition}

\begin{definition}\label{def:dual-complex}
{\em 
Let $(X,B+M)$ be a generalized pair.
Let $\pi: Y \to X$ be a generalized dlt modification such that
\[
K_Y + B_Y + M_Y = \pi^*(K_X+B+M),
\]
where $M_Y$ is the pushforward of $M'$ on $Y$. Write \[\lfloor B_Y\rfloor = E_1 + E_2 + \cdots + E_r,\] which is a simple normal crossing divisor on $Y$. 

The {\it dual complex} $\mathcal{D}(Y,B_Y+M_Y)$
of the dlt modification is a simplicial complex constructed as follows:
\begin{itemize}
    \item Every divisor $E_i$ corresponds to a vertex $v_i$ in $\mathcal{D}(Y,B_Y+M_Y)$. For every subset $I\subseteq \{1,2,\ldots,r\}$ and every irreducible component $Z$ of $\bigcap_{i\in I} E_i$, $Z$ corresponds to a simplex $v_Z$ of dimension $\# I - 1$.
    \item For every subset $I\subseteq \{1,2,\ldots,r\}$ and every $j\in I$, we have the following gluing maps. Let $Z\subseteq \bigcap_{i\in I} E_i$ be any irreducible component, then $Z$ lies in a unique component $W$ of $\bigcap_{i\in I\setminus\{j\}}E_i$. We have a gluing map which is the inclusion of $v_W$ into $v_Z$, as the face of $v_Z$ which do not contain vertex $v_i$.
\end{itemize}
The dimension of $\mathcal{D}(Y,B_Y+M_Y)$ is defined to be the smallest dimension
of the maximal simplex, with respect to the inclusion,
of $\mathcal{D}(Y,B_Y+M_Y)$.

We define the \textit{dual complex} $\mathcal{D}(X,B+M)$ associated to $(Y,B_Y+M_Y)$ as $\mathcal{D}(Y,B_Y+M_Y)$ and we define
\[
\dim \mathcal{D}(X,B+M) = \dim \mathcal{D}(Y,B_Y+M_Y).
\]
If $Z\subset X$ is an irreducible subvariety with generic point $\eta_Z$, 
we define $\mathcal{D}(X,B+M;\eta_Z)$ to be $\mathcal{D}(U, B|_U+M|_U)$ for a sufficiently small neighborhood $U$ of $\eta_Z$.
}
\end{definition}

\begin{proposition}
Let $(X,B+M)$ be a generalized log canonical pair. Then $\dim \mathcal{D}(X,B+M)$ does not depend on the choice of generalized dlt modifications $(Y,B_Y+M_Y)$ of $(X,B+M)$.
\end{proposition}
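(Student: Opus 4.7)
My plan is to establish invariance of $\dim \mathcal{D}(X,B+M)$ under the choice of generalized dlt modification in two stages. First, given two gdlt modifications $\pi_i\colon (Y_i,B_{Y_i}+M_{Y_i})\to (X,B+M)$ for $i=1,2$, I would construct a third gdlt modification $(W,B_W+M_W)$ admitting projective birational morphisms to both $Y_1$ and $Y_2$ over $X$, by taking a common log resolution on which $M$ descends and then applying the existence of generalized dlt modifications stated above (equivalently, running a relative MMP to contract divisors of positive discrepancy over $X$). Since the composition $W\to Y_i\to X$ is crepant, the induced morphism $W\to Y_i$ is crepant as well. This reduces the statement to showing: if $\phi\colon (Y,B_Y+M_Y)\to (Y',B_{Y'}+M_{Y'})$ is a projective birational morphism between two gdlt modifications of $(X,B+M)$, then $\dim \mathcal{D}(Y)=\dim \mathcal{D}(Y')$.

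Second, I would analyze such a crepant birational morphism combinatorially. Every $\phi$-exceptional divisor $E$ is a generalized lc place of $(Y',B_{Y'}+M_{Y'})$, so $E$ appears with coefficient one in $B_Y$, and by gdltness of $Y'$ its image $\phi(E)$ is a stratum $V_E$ of $\lfloor B_{Y'}\rfloor$. The components of $\lfloor B_Y\rfloor$ are exactly the strict transforms of components of $\lfloor B_{Y'}\rfloor$ together with the $\phi$-exceptional divisors, and each exceptional $E$ meets precisely the strict transforms of those components of $\lfloor B_{Y'}\rfloor$ containing $V_E$ (possibly together with other exceptional divisors whose centers lie in $V_E$). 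It follows that $\mathcal{D}(Y)$ is obtained from $\mathcal{D}(Y')$ by a sequence of star subdivisions, one per $\phi$-exceptional divisor $E$, introducing the vertex $v_E$ into the interior of the simplex of $\mathcal{D}(Y')$ corresponding to $V_E$. A star subdivision replaces each maximal simplex $\sigma$ containing the subdivided face by finitely many maximal simplices of the same dimension as $\sigma$, and leaves maximal simplices disjoint from that face unchanged; hence the multiset of dimensions of maximal simplices is preserved, yielding $\dim \mathcal{D}(Y)=\dim \mathcal{D}(Y')$.

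\textbf{Main obstacle.} The technical heart of the argument lies in the incidence claim of the second paragraph: verifying that each $\phi$-exceptional divisor $E$ meets exactly the strict transforms of the components of $\lfloor B_{Y'}\rfloor$ containing $V_E$, and that the intersection patterns among several exceptional divisors on $Y$ are controlled by the stratification of $\mathcal{D}(Y')$. Establishing this relies on the gdlt assumption on both sides of $\phi$ and on a connectedness-type argument for the fibers of $\phi$ over strata of $\lfloor B_{Y'}\rfloor$, which ensures that each new stratum of $\lfloor B_Y\rfloor$ involving an exceptional divisor genuinely lies over a well-defined stratum of $\lfloor B_{Y'}\rfloor$ of the expected codimension.
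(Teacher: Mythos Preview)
Your approach differs substantially from the paper's. The paper does not construct a common roof or analyze the combinatorics of crepant morphisms directly; instead, it picks a maximal simplex of minimal dimension in $\mathcal{D}(Y_1,B_{Y_1}+M_{Y_1})$, pushes the corresponding stratum $W$ down to a minimal generalized log canonical center $Z=\pi_1(W)$ on $X$, and then invokes~\cite[Theorems 1.1 and 1.6]{FS20} to conclude that $\dim\mathcal{D}(X,B+M;\eta_Z)$ is independent of the dlt modification. The symmetric argument for $Y_2$ finishes the proof. In other words, the paper outsources the hard step to Filipazzi--Svaldi.

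Your plan is in the spirit of de~Fernex--Koll\'ar--Xu~\cite{dFKX17} and is viable in principle, but the second stage is where the genuine content lies, and your sketch does not close it. The assertion that $\mathcal{D}(Y)$ is obtained from $\mathcal{D}(Y')$ by a sequence of \emph{star} subdivisions is not automatic from the existence of a crepant morphism $\phi$: one must factor $\phi$ into elementary steps each of which is a blow-up along a stratum (or something with the same effect on dual complexes), and check gdltness is preserved along the way. Without such a factorization, the incidence claim you flag---that an exceptional divisor $E$ meets \emph{precisely} the strict transforms of components containing $V_E$, and that intersections among several exceptional divisors are governed by the stratification downstairs---can fail to organize into a subdivision in any obvious way. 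Establishing this carefully is exactly what~\cite{dFKX17} and~\cite{FS20} do, so your ``main obstacle'' is not a technicality but the heart of the matter. If you want a self-contained argument, you would effectively be reproving those results; the shorter route, and the one the paper takes, is to localize at a minimal lc center and cite~\cite{FS20}.
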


\begin{proof}
Let $\pi_i: (Y_i,B_{Y_i}+M_{Y_i})\to (X,B+M)$ be generalized dlt modifications, for $i= 1,2$. Suppose $\dim \mathcal{D}(Y_1,B_{Y_1}+M_{Y_1}) = r$. Then there exists $(r+1)$ $\pi_1$-exceptional divisors whose intersection is non-empty.
Let $W$ be an irreducible component of this intersection and $Z = \pi_1(W)$. 
Then $Z$ is a minimal generalized log canonical center of $(X,B+M)$. 
So $\dim \mathcal{D}(X,B+M;\eta_Z) = r$. 
By~\cite[Theorem 1.1 and Theorem 1.6]{FS20}, the dimension of $\mathcal{D}(X,B+M;\eta_Z)$ is independent of the choice of the dlt modification.
Hence, $\dim \mathcal{D}(Y_2, B_{Y_2}+M_{Y_2})\leq r$. Similarly, we can prove that $\dim \mathcal{D}(Y_2, B_{Y_2}+M_{Y_2}) \geq \dim \mathcal{D}(Y_1,B_{Y_1}+M_{Y_1})$ and hence the equality holds.
\end{proof}

\begin{definition}\label{def:coreg-general}
{\em 
Let $(X,B+M)$ be a quasi-projective normal generalized pair.
We define the {\em regularity} of $(X,B+M)$ to be 
\[
{\rm reg}(X,B+M) := \dim\mathcal{D}(X,B+M).
\]
On the other hand, we define the {\em coregularity}
of $(X,B+M)$ to be
\[
{\rm coreg}(X,B+M):= \dim X -\dim \mathcal{D}(X,B+M)-1.
\]
Hence, by definition, we have that 
\[
{\rm reg}(X,B+M) + {\rm coreg}(X,B+M)= \dim X-1.
\] 
In a few words, a generalized log canonical pair $(X,B+M)$ has coregularity $c$ 
if for every minimal log canonical center of $(X,B+M)$,
we may find a minimal dlt center of dimension $c$ mapping onto it.

Analogously, for $Z\subset X$ an irreducible subvariety, we define
\[
\reg(X,B+M; \eta_Z) = \reg (U, B|_U+M|_U)
\]
for a sufficiently small neighborhood $U$ of $\eta_Z$.
}
\end{definition}

Now, we turn to define the absolute regularity
and absolute coregularity
of klt singularities and Fano type varieties. 
This definition is not crucial for this paper, 
as in the log canonical and log Calabi--Yau case
they agree with the usual regularity
and coregularity (see Remark~\ref{rem:abs-vs-coreg}).
However, it is worth mentioning as 
it will be considered in forthcoming research.

\begin{definition}
{\em 
Let $(X,B+M)$ be a projective generalized pair.
We say that $(X,B+M)$ is {\em generalized log Calabi--Yau}
if the following conditions are satisfied: 
\begin{itemize}
    \item $(X,B+M)$ has generalized log canonical singularities, and
    \item $K_X+B+M\sim_\qq 0$.
\end{itemize}
We say that a generalized pair $(X,B+M)$ 
is of {\em log Calabi--Yau type} if there exists $\Gamma\geq0$
for which $(X,B+\Gamma+M)$ is a generalized log Calabi--Yau pair.
In this case, to abbreviate, 
we may say that $(X,B+M)$ is CY type.
For a generalized log Calabi--Yau projective variety
$(X,B+M)$, 
two minimal dlt center are birationally equivalent.
}
\end{definition}

\begin{definition}
\label{def:abs-reg}
{\em 
Let $(X,B+M)$ be a projective generalized pair.
We define the {\em absolute regularity} of $(X,B+M)$ to be
\[
\hat{{\rm reg}}(X,B+M):=
\max \{
\dim \mathcal{D}(X,B+\Gamma+M) \mid 
(X,B+\Gamma+M) \text{ is generalized log Calabi--Yau}
\}.
\]
If the dual complex 
$\mathcal{D}(X,B+\Gamma+M)$ is empty, then
we set 
$\dim\mathcal{D}(X,B+\Gamma+M)=-1$.
If the previous set is empty, then we set 
${\rm reg}(X,B+M)=-\infty$.
Hence, a generalized pair has finite absolute regularity
if and only if it is of generalized log Calabi--Yau type.
A generalized pair has absolute regularity $-1$
if and only if every log Calabi--Yau structure on it has klt singularities, i.e., 
absolute regularity $-1$ is equivalent to being exceptional.
The absolute regularity is non-negative if and only if we can find a log Calabi--Yau structure with a log canonical center.
If $X$ is $n$-dimensional, we have that 
\[
\hat{{\rm reg}}(X,B+M) \in \{-\infty,-1,0, \dots, n-1\}.
\]
We define the {\em absolute coregularity} of $(X,B+M)$ to be
\[
\hat{{\rm coreg}}(X,B+M):= \dim X - \hat{{\rm reg}}(X,B+M)-1.
\]
Analogously, a generalized pair
has finite absolute coregularity
if and only if it is of generalized log Calabi--Yau type.
A generalized pair $(X,B+M)$ has absolute coregularity $\dim X$
if and only if every log Calabi--Yau $(X,B+\Gamma+M)$ structure has klt singularities, i.e., 
the generalized pair $(X,B+M)$ is exceptional.
The absolute coregularity $c$ is in the interval 
$\{0,\dots,n-1\}$ if and only if
there exists a log Calabi--Yau structure
$(X,B+\Gamma+M)$ which admits a log canonical center.
In summary, for
a projective variety $X$ of dimension $n$
and a generalized pair structure $(X,B+M)$, we have that 
\[
\hat{{\rm coreg}}(X,B+M)\in \{0,1,2,\dots,n,\infty\}.
\]
If the absolute coregularity of $(X,B+M)$ is in the set
$\{0,1,2,\dots,n\}$ then the absolute coregularity equals the dimension of the smallest minimal dlt center
on a log Calabi--Yau structure of $(X,B+M)$.
If the log Calabi--Yau structure $(X,B+M)$
is itself klt, then we can set $X$ formally as a log canonical center.
}
\end{definition}

\begin{definition}
{\em 
Let $(X,B+M;x)$ be a generalized log canonical singularity. 
We define the {\em absolute regularity} of $(X,B+M)$ at $x$ to be 
\[
\hat{{\rm reg}}(X,B+M;x):=
\max 
\{
\dim\mathcal{D}(X,B+\Gamma+M;x) \mid 
(X,B+\Gamma+M;x) \text{ is generalized log canonical}
\}.
\]
We define the absolute coregularity of the klt to be
\[
\hat{{\rm coreg}}(X,B+M;x):= \dim X-\hat{\rm reg}(X,B+M;x)-1.
\]
}
\end{definition}

The following remark says that in our setting, 
the coregularity
and absolute coregularity agree.

\begin{remark}\label{rem:abs-vs-coreg}
{\em 
If $(X,B+M)$ is a generalized log Calabi--Yau pair, then 
\[
{\rm reg}(X,B+M)=\hat{\rm reg}(X,B+M) \text{ and }
{\rm coreg}(X,B+M)=\hat{\rm coreg}(X,B+M).
\]
Analogously, if $(X,B+M;x)$ is a generalized log
canonical singularity for which $x$ is a log canonical center, then
\[
{\rm reg}(X,B+M;x)=\hat{\rm reg}(X,B+M;x) \text{ and }
{\rm coreg}(X,B+M;x)=\hat{\rm coreg}(X,B+M;x).
\] 
}
\end{remark}

The following lemma, 
which is straightforward from the definition,
allows us to perform adjunction
whenever the dimension is larger than the coregularity.

\begin{lemma}\label{coregularity-and-dimension}
Let $(X,B+M)$ be a 
generalized log canonical pair.
If $\dim X>{\rm coreg}(X,B+M)$, then 
$(X,B+M)$ has a non-trivial generalized log canonical center.
In particular, if $(Y,B_Y+M_Y)$ is a generalized dlt modification of $(X,B+M)$, then $\lfloor B_Y\rfloor \neq \emptyset$.
\end{lemma}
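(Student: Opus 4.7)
The proof is essentially an unpacking of the definitions, so the plan is short. The key observation is that the inequality $\dim X > \operatorname{coreg}(X,B+M)$ translates directly, via Definition~\ref{def:coreg-general}, into
\[
\dim X > \dim X - \dim \mathcal{D}(X,B+M) - 1,
\]
equivalently $\dim \mathcal{D}(X,B+M) \geq 0$. In particular, the dual complex of $(X,B+M)$ is nonempty.

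Now fix any generalized dlt modification $\phi\colon (Y,B_Y+M_Y)\to (X,B+M)$. By construction of the dual complex in Definition~\ref{def:dual-complex}, the vertices of $\mathcal{D}(Y,B_Y+M_Y)=\mathcal{D}(X,B+M)$ correspond bijectively to the prime components of $\lfloor B_Y \rfloor$. Since the dual complex has at least one vertex, $\lfloor B_Y\rfloor$ contains at least one prime divisor, so $\lfloor B_Y\rfloor \neq \emptyset$. This is the second assertion of the lemma.

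For the first assertion, let $S$ be any prime component of $\lfloor B_Y\rfloor$. Then $\operatorname{coeff}_S(B_Y)=1$, so $a_S(X,B+M)=0$, i.e.\ $S$ is a generalized log canonical place of $(X,B+M)$. Its image $\phi(S)\subseteq X$ is therefore a generalized log canonical center. This center is non-trivial (i.e.\ a proper subvariety of $X$): if $S$ is $\phi$-exceptional then $\phi(S)$ has positive codimension in $X$, while if $S$ is non-exceptional then $\phi(S)$ is a prime divisor appearing in $B$ with coefficient $1$. Either way, $(X,B+M)$ admits a non-trivial generalized log canonical center, which completes the proof.

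I do not anticipate a serious obstacle here: the lemma really is a direct consequence of the definitions of coregularity, dual complex, and generalized dlt modification, together with the identification between vertices of $\mathcal{D}$ and components of $\lfloor B_Y\rfloor$. The only thing to be mildly careful about is the convention $\dim \emptyset = -1$ for the dual complex, which is what makes the stated equivalence $\dim \mathcal{D}\geq 0 \iff \lfloor B_Y\rfloor \neq \emptyset$ hold without exception.
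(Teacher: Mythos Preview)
Your proof is correct and is precisely the unpacking of the definitions that the paper has in mind; the paper itself gives no argument, stating only that the lemma ``is straightforward from the definition.'' Your one caveat about the convention $\dim\emptyset=-1$ is well placed and consistent with the paper's usage (see Definition~\ref{def:abs-reg}).
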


Now, we turn to prove some preliminaries regarding how the coregularity behaves
under adjunction,
perturbing the boundary, 
and taking fibrations.
The proofs are well-known to the experts. In some cases, we give a short argument.
The following lemma follows from~\cite[Theorem 1.6]{FS20}.

\begin{lemma}\label{lem:coreg-adjunction}
Let $(X,B+M)$ be a generalized log Calabi--Yau pair.
Let $S$ be the normalization of a component
of $\lfloor B\rfloor$.
Let $B_S$ 
and $M_S$ be the boundary and moduli part defined by generalized adjunction, so that
\[
(K_X+B+M)|_S=K_S+B_S+M_S.
\] 
Then, we have that 
${\rm coreg}(S,B_S+M_S)={\rm coreg}(X,B+M)$.
\end{lemma}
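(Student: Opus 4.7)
The plan is to reduce the equality of coregularities to a computation of dimensions of dual complexes on a single generalized dlt modification, and then identify $\mathcal{D}(S,B_S+M_S)$ with the star of a distinguished vertex inside $\mathcal{D}(X,B+M)$.

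First I would choose a generalized dlt modification $\pi\colon (Y,B_Y+M_Y)\to (X,B+M)$. Since $S$ is a component of $\lfloor B\rfloor$, its strict transform $S_Y$ is a component of $\lfloor B_Y\rfloor$ of coefficient one. Performing generalized adjunction along $S_Y$ produces a gdlt pair $(S_Y,B_{S_Y}+M_{S_Y})$, and the induced morphism $S_Y\to S$ is a generalized dlt modification of $(S,B_S+M_S)$. Thus $\mathcal{D}(S,B_S+M_S)=\mathcal{D}(S_Y,B_{S_Y}+M_{S_Y})$ by the definition of the dual complex.

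Next I would identify the simplices of $\mathcal{D}(S_Y,B_{S_Y}+M_{S_Y})$ with the simplices of $\mathcal{D}(Y,B_Y+M_Y)$ passing through the vertex $v_{S_Y}$. Any stratum of $\lfloor B_{S_Y}\rfloor$ is an irreducible component of an intersection $S_Y\cap E_{i_1}\cap\cdots\cap E_{i_k}$, where the $E_{i_j}$ range over the remaining components of $\lfloor B_Y\rfloor$ meeting $S_Y$. This sets up a bijection between $(k-1)$-simplices of $\mathcal{D}(S_Y,B_{S_Y}+M_{S_Y})$ and $k$-simplices of $\mathcal{D}(Y,B_Y+M_Y)$ containing $v_{S_Y}$, giving
\[
\dim\mathcal{D}(S_Y,B_{S_Y}+M_{S_Y})=\dim_{v_{S_Y}}\mathcal{D}(Y,B_Y+M_Y)-1,
\]
where the right-hand side denotes the minimum, over maximal simplices through $v_{S_Y}$, of their dimension.

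The main obstacle is to show that the vertex $v_{S_Y}$ lies in a maximal simplex of $\mathcal{D}(Y,B_Y+M_Y)$ of dimension equal to $\dim\mathcal{D}(X,B+M)$. This is exactly the equidimensionality of the dual complex in the log Calabi--Yau setting: in the classical case it goes back to \cite{KK10}, and for generalized pairs it follows from~\cite[Theorem 1.6]{FS20}. Granted this equidimensionality, one obtains $\dim\mathcal{D}(S,B_S+M_S)=\dim\mathcal{D}(X,B+M)-1$, and combining this with $\dim S=\dim X-1$ the definition of the coregularity immediately yields ${\rm coreg}(S,B_S+M_S)={\rm coreg}(X,B+M)$.
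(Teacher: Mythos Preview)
Your argument is correct and follows the same route the paper indicates: the paper does not give a proof but simply says the lemma ``follows from~\cite[Theorem 1.6]{FS20}'', and your key step---equidimensionality of the dual complex in the generalized log Calabi--Yau setting---is precisely that result. Your write-up is a faithful unpacking of the citation.

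One minor technical point: the assertion that $S_Y\to S$ is a \emph{generalized dlt modification} in the paper's sense is not literally justified, since the paper's definition requires $\mathbb{Q}$-factoriality and purely divisorial exceptional locus, neither of which is automatic for $S_Y$. This does not affect the argument: $(S_Y,B_{S_Y}+M_{S_Y})$ is a crepant gdlt model of $(S,B_S+M_S)$, and the results of~\cite{FS20} you are invoking show that the dual complex (and in particular its dimension) is independent of the choice of such model. Alternatively, a small $\mathbb{Q}$-factorialization of $S_Y$ extracts no divisors and leaves the dual complex unchanged.
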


\begin{lemma}\label{lem:coreg-throw-components}
Let $(X,B+M)$ be a generalized log canonical pair
and $V$ be a minimal generalized log canonical center of $(X,B+M)$. 
Let $B_0 \leq B$ be the sum of all the components of $B$ containing $V$.
Write $M'=\sum_j M'_j$ where each $M'_j$ is a nef Cartier divisor.
Let $M'_0$ be the sum of the $M'_j$
that do not descend over the generic point of $V$
and let $M_0$ the push-forward of $M'_0$ to $X$.
Then, we have that 
\[
{\rm coreg}(X,B+M;\eta_V)=
{\rm coreg}(X,B_0+M_0;\eta_V).
\] 
\end{lemma}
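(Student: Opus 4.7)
The plan is to reduce the claimed equality to a purely local statement at $\eta_V$ and then exhibit, on a small enough neighborhood of $\eta_V$, a common generalized dlt modification for the two pairs. By the very definition of $\mathcal{D}(X,B+M;\eta_V)$, both sides of the asserted equality are computed on an arbitrarily small open neighborhood $U$ of $\eta_V$, and the ambient dimension $\dim X$ does not change. So it suffices to prove $\dim\mathcal{D}(U,(B+M)|_U)=\dim\mathcal{D}(U,(B_0+M_0)|_U)$ for $U$ small enough.

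First I would shrink $U$ so that every prime component $D$ of $B$ with $V\not\subseteq D$ satisfies $D\cap U=\emptyset$; this is possible because $V\cap D$ is a proper closed subset of $V$ in that case, $V$ meets only finitely many such components, and we may delete the (finite) union of these proper subvarieties while keeping $\eta_V\in U$. After this step, $B|_U=B_0|_U$. Next, I would further shrink $U$ so that each nef Cartier piece $M'_j$ appearing in $M-M_0$ descends on $U$; this is exactly the content of "descending over $\eta_V$" in the definition recalled at the start of the preliminaries. On the resulting $U$, each such piece is the pullback of a fixed nef Cartier divisor from $U$, so its trace on any birational model $\psi\colon U'\to U$ is $\psi^*(M_j|_U)$ with no exceptional contribution, and hence it contributes $0$ to the generalized log discrepancy of every prime divisor over $U$.

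Consequently, $(U,(B+M)|_U)$ and $(U,(B_0+M_0)|_U)$ have exactly the same generalized log canonical places and generalized log canonical centers. Any $\mathbb{Q}$-factorial gdlt modification $(Y,B_{0,Y}+M_{0,Y})$ of $(U,(B_0+M_0)|_U)$ furnished by the existence of generalized dlt modifications becomes, after adding back the pullback of the descended pieces $(M-M_0)|_U$, a generalized dlt modification of $(U,(B+M)|_U)$ with the same underlying $Y$ and the same $\lfloor B_Y\rfloor$. Thus the two pairs produce the same simplicial complex, and the dimensions of $\mathcal{D}$ agree.

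The main subtlety I anticipate is the bookkeeping on the moduli part: I must be careful that the decomposition $M'=\sum_j M'_j$ is compatible with restriction to $U$ and with passage to higher models, and that the pieces collected into $M-M_0$ really are the ones that add nothing to discrepancies over $U$. The hypothesis that $V$ is a minimal center is not used directly; it is the natural setting in which one applies the lemma, but the argument above works for any subvariety $V\subseteq X$ as long as $B_0$ and $M_0$ are defined relative to it as in the statement.
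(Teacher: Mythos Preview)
Your argument is correct and follows exactly the same route as the paper's proof, which is a one-line localization: ``Since the statement is about coregularity at the generic point of $V$, we may just localize at the generic point of $V$ and assume that $B_0=B$ and $M_0=M$.'' You have simply unpacked this sentence, explaining why the extraneous components of $B$ and the descended pieces of $M$ disappear on a small enough neighborhood of $\eta_V$ and why this leaves the discrepancies, hence the dual complex, unchanged; your observation that the minimality of $V$ plays no role is also accurate.
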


\begin{proof}
Since the statement is about coregularity at the generic point of $V$, 
we may just localize at the generic point of $V$
and assume that $B_0=B$ and $M_0=M$.
\end{proof}
    
\begin{lemma}\label{lem:coreg-gen-fiber}
Let $(X,B+M)$ be a generalized log canonical pair
and $X\rightarrow Z$ be a fibration.
Assume that every generalized non-klt center of $(X,B+M)$ dominates $Z$.
Let $F$ be a general fiber of $X\rightarrow Z$
and $(F,B_F+M_F)$ be the generalized pair
induced by adjunction.
Then, we have that
\[
{\rm coreg}(F,B_F+M_F)
\leq
{\rm coreg}(X,B+M)-\dim Z.
\]
\end{lemma}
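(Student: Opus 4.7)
The plan is to pass to a generalized dlt modification, relate the dual complexes of the total space and the fiber, and exploit the hypothesis that every non-klt center dominates $Z$ by running a Stein factorization argument on strata.

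Concretely, I would take a generalized dlt modification $\pi\colon (Y,B_Y+M_Y)\to (X,B+M)$. Since $\pi$ is crepant, every stratum of $\lfloor B_Y\rfloor$ is a generalized lc center and its image in $X$ is a generalized non-klt center of $(X,B+M)$; by hypothesis this image dominates $Z$, hence so does the stratum itself. For general $z\in Z$, the fiber $F_Y$ of $Y\to Z$ meets the snc open locus of $(Y,B_Y)$ densely, and by generic smoothness every stratum $W$ of $\lfloor B_Y\rfloor$ intersects $F_Y$ transversally in a smooth locus of the expected dimension $\dim W-\dim Z$. Restricting the moduli part accordingly, $(F_Y,B_Y|_{F_Y}+M_Y|_{F_Y})$ is gdlt and the induced projective birational morphism $F_Y\to F$ exhibits it as a dlt modification of $(F,B_F+M_F)$, so $\dim\mathcal{D}(F,B_F+M_F)=\dim\mathcal{D}(F_Y,B_Y|_{F_Y}+M_Y|_{F_Y})$.

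Because $\dim F=\dim X-\dim Z$, the statement reduces to $\dim\mathcal{D}(F_Y)\geq\dim\mathcal{D}(Y)$. Recall that $\dim\mathcal{D}$ is the smallest dimension attained by a maximal simplex, which coincides with the largest dimension of a minimal stratum. I would fix a maximal simplex of $\mathcal{D}(F_Y)$, corresponding to a minimal stratum $V\subset F_Y$, and write $V$ as an irreducible component of $D_1\cap\cdots\cap D_s$, where each $D_j$ is an irreducible component of $E_{i_j}|_{F_Y}$ for some component $E_{i_j}$ of $\lfloor B_Y\rfloor$. Let $W$ be the irreducible component of $E_{i_1}\cap\cdots\cap E_{i_s}$ containing $V$; then $V$ is a component of $W\cap F_Y$ with $\dim V=\dim W-\dim Z$.

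The hard part is to show that $W$ is itself a minimal stratum of $\lfloor B_Y\rfloor$, which would force its associated simplex of $\mathcal{D}(Y)$ to be maximal of dimension $s-1$. The worry is that a strictly smaller stratum $W'\subsetneq W$ could have all its components in $W\cap F_Y$ lying in components of $W\cap F_Y$ distinct from $V$, since $W\cap F_Y$ may be reducible. I would rule this out using the Stein factorization $W\to\widetilde Z\to Z$: because $W$ is irreducible and dominates $Z$, for general $z$ the irreducible components of $W\cap F_Y$ correspond bijectively to the preimages of $z$ in $\widetilde Z$, and let $\tilde z$ be the preimage corresponding to $V$. A strictly smaller stratum $W'\subsetneq W$ is a generalized lc center, hence dominates $Z$, and therefore dominates $\widetilde Z$ as well. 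Consequently $W'$ meets the fiber of $W\to\widetilde Z$ over $\tilde z$, so $W'\cap V\neq\emptyset$. Any irreducible component of $W'\cap F_Y$ through a point of $V$ is contained in $W\cap F_Y$, is irreducible, and meets the irreducible component $V$, so it lies inside $V$; its dimension equals $\dim W'-\dim Z<\dim V$, producing a proper substratum of $V$ in $F_Y$ and contradicting minimality.

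Thus $W$ is minimal, so its simplex in $\mathcal{D}(Y)$ is maximal of dimension $s-1\geq\dim\mathcal{D}(Y)$, giving $\dim\mathcal{D}(F_Y)\geq\dim\mathcal{D}(Y)$. Combining with $\dim F=\dim X-\dim Z$ yields
\[
\operatorname{coreg}(F,B_F+M_F)=\dim F-\dim\mathcal{D}(F_Y)-1\leq (\dim X-\dim Z)-\dim\mathcal{D}(Y)-1=\operatorname{coreg}(X,B+M)-\dim Z,
\]
as desired.
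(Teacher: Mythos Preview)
Your argument is correct and follows the same outline as the paper's---pass to a gdlt model and compare strata on the total space with strata on a general fiber---but you are considerably more careful at the crucial point. The paper simply chooses $d-c$ components $S_1,\dots,S_{d-c}$ of $\lfloor B\rfloor$ with nonempty common intersection and restricts them to $F$, thereby exhibiting \emph{one} simplex of dimension $d-c-1$ in $\mathcal{D}(F)$. Under the paper's own convention that $\dim\mathcal{D}$ is the \emph{smallest} dimension of a maximal simplex, this alone does not bound $\dim\mathcal{D}(F)$ from below: one must check that \emph{every} maximal simplex of $\mathcal{D}(F_Y)$ has dimension at least $\dim\mathcal{D}(Y)$, equivalently that every minimal stratum of $F_Y$ comes from a minimal stratum of $Y$. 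Your Stein-factorization step supplies exactly this. Since a proper substratum $W'\subsetneq W$ is a generalized lc center and hence dominates $Z$, and since $\widetilde Z\to Z$ is finite with $\widetilde Z$ irreducible, the image of $W'$ in $\widetilde Z$ is a closed irreducible subset surjecting onto $Z$, hence all of $\widetilde Z$; thus $W'$ meets the connected fiber of $W\to\widetilde Z$ over $\tilde z$, which is $V$, and any component of $W'\cap F_Y$ through such a point gives a proper substratum of $V$, contradicting minimality.

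Two small remarks. First, the general fiber $F_Y$ need not inherit $\mathbb{Q}$-factoriality from $Y$, so it may not literally be a gdlt modification in the sense of the paper's definition; this is harmless, since the dimension of the dual complex is insensitive to small $\mathbb{Q}$-factorializations. Second, in the only place the paper actually \emph{applies} this lemma (the proof of Theorem~\ref{thm:local-acc-coreg-c}), the fiber pair $(S_z,B_{S_z}+M_{S_z})$ is generalized log Calabi--Yau, so its dual complex is equidimensional and the paper's shorter argument already suffices there; your version, however, establishes the lemma in the generality in which it is stated.
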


\begin{proof}
We may assume that $(X,B+M)$ is a generalized dlt pair.
Let $d$ be the dimension of $X$.
Since $(X,B+M)$ has coregularity $c$, we can find 
$d-c$ components $S_1,\dots,S_{d-c}$ of $\lfloor B\rfloor$
so that 
\[
S_1\cap \dots \cap S_{d-c}\neq \emptyset.
\] 
Note that $S_{F_i}:=S_i\cap F$
is a component
with coefficient one in $B_F$.
Then, we conclude that
\[
S_{F,1}\cap \dots \cap S_{F,d-c} \neq \emptyset,
\]
as divisors on $F$.
Thus, we have that 
\[
{\rm coreg}(F,B_F+M_F)\leq {\rm coreg}(X,B+M) - \dim Z
\]
as claimed.
\end{proof}

The following lemma states that the coregularity of a generalized log Calabi--Yau pair does not change when we run a 
minimal model program.

\begin{lemma}\label{lem:coreg-mmp}
Let $(X,B+M)$ be a generalized log Calabi--Yau pair. 
Let $X\dashrightarrow X'$ be a birational contraction.
Denote by $B'$ the push-forward of $B$ on $X'$
and by $M'$ the trace on $X'$ 
of the b-nef divisor associated to $M$.
Then, we have that 
\[
{\rm coreg}(X,B+M)=
{\rm coreg}(X',B'+M').
\]
\end{lemma}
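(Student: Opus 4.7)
The plan is to show that $(X,B+M)$ and $(X',B'+M')$ share the same log discrepancies at every divisorial valuation, and then deduce the equality of coregularities by invoking the invariance of the dual complex dimension already cited in the paper.

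First I would choose a smooth common resolution $p\colon W\to X$, $q\colon W\to X'$ on which the b-nef divisor $M$ descends, and write
\[
K_W+B_W^p+M_W=p^*(K_X+B+M),\qquad K_W+B_W^q+M_W=q^*(K_{X'}+B'+M').
\]
Since $(X,B+M)$ is generalized log Calabi--Yau and $\phi\colon X\dashrightarrow X'$ is a birational contraction, both pullbacks are $\sim_\qq 0$, so the difference $F:=B_W^p-B_W^q$ is $\sim_\qq 0$. A case analysis on prime divisors $E\subset W$---using that $\phi^{-1}$ extracts no divisors, so no prime of $W$ can be $p$-exceptional while being $q$-non-exceptional---shows that the coefficients of $F$ vanish on every prime of $W$ that is not $q$-exceptional, i.e. $\Supp(F)$ lies in the $q$-exceptional locus. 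Applying the negativity lemma to $F$ and to $-F$ over $X'$ forces $F=0$, so that
\[
a_E(X,B+M)=a_E(X',B'+M')
\]
for every divisorial valuation $E$ over $X$. In particular, $(X',B'+M')$ is itself generalized log canonical.

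Next, I would take a generalized dlt modification $(Y,B_Y+M_Y)\to(X,B+M)$ and a generalized dlt modification $(Y',B_{Y'}+M_{Y'})\to(X',B'+M')$. By the equality of log discrepancies both modifications extract the same set of log canonical places, and so $(Y,B_Y+M_Y)$ and $(Y',B_{Y'}+M_{Y'})$ are crepant birational generalized dlt log Calabi--Yau pairs. Invoking \cite[Theorem 1.1 and Theorem 1.6]{FS20}---the same invariance result used earlier in the paper to show that $\dim\mathcal{D}(X,B+M)$ is well defined---yields
\[
\dim\mathcal{D}(Y,B_Y+M_Y)=\dim\mathcal{D}(Y',B_{Y'}+M_{Y'}).
\]
Combined with $\dim X=\dim X'$, the defining formula ${\rm coreg}=\dim-\dim\mathcal{D}-1$ gives the desired equality.

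The only point requiring genuine care is the verification that $F$ is supported on the $q$-exceptional locus; this is exactly what the birational contraction hypothesis is designed to provide, and once it is established the negativity lemma together with the cited invariance of dual complex dimensions delivers the conclusion with no further work.
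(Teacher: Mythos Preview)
Your proposal is correct and follows essentially the same approach as the paper: establish that the two pairs are crepant birational via a common resolution and the negativity lemma, then deduce equality of dual complex dimensions. The only minor difference is that the paper reads off the dual complex directly from the shared $B_W^{=1}$ on the common resolution, whereas you pass through separate dlt modifications and invoke \cite[Theorem~1.1 and~1.6]{FS20}; your route is slightly longer but also slightly more careful, since the paper's identification $\mathcal{D}(X,B+M)=\mathcal{D}(Y,B_Y^{=1})$ on a log resolution implicitly uses the same invariance result.
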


\begin{proof}
Since $(X,B+M)$ is generalized log Calabi--Yau, we can find a common log resolution $p\colon Y\rightarrow X$
and $q\colon Y\rightarrow X'$ for which 
\[
K_Y+B_Y+M_Y=p^*(K_X+B+M)=q^*(K_{X'}+B'+M').
\]
By definition, we have that 
$\mathcal{D}(X,B+M)=\mathcal{D}(Y,B_Y^{=1})$,
where 
$B_Y^{=1}$ denotes the sum of the components of $B_Y$ which appear with coefficient one in $B_Y$. 
The same applies to $(X',B'+M')$.
Then, we conclude that 
$\dim \mathcal{D}(X',B'+M')=\dim \mathcal{D}(X,B+M)$.
This concludes the proof.
\end{proof}

\subsection{Global ascending chain condition}

In this subsection, we recall the ascending chain condition
for generalized pairs. 
We will use the global ascending chain condition for
varieties of fixed dimension as
the base for an inductive process in the following section.
The following result is~\cite[Theorem 1.5]{BZ16}.

\begin{proposition}\label{prop:global-acc-dim-n}
Let $n$ be a positive integer.
Let $I$ be a set of nonnegative real numbers satisfying the descending chain condition.
Then there is a finite subset $I_0 \subseteq I$ such that if $(X,B+M)$ satisfies the following conditions:
\begin{enumerate}
    \item the variety $X$ is a projective variety of dimension $n$;
    \item the generalized pair $(X,B+M)$ is generalized log canonical;
    \item the coefficients of $B$ belong to $I$;
    \item we can write $M'=\sum_j \mu_j M'_j$ where
    $\mu_j\in I$ and each $M'_j$ is a nef Cartier divisor;
    \item $\mu_j=0$ whenever $M'_j\equiv 0$; and 
    \item $K_X+B+M\equiv 0$;
\end{enumerate}
then the coefficients of
$B$ and the $\mu_j$ belong to $I_0$.
\end{proposition}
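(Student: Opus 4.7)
The plan is to argue by contradiction following the HMX strategy adapted to generalized pairs, proceeding by induction on the dimension $n$. Suppose no such finite $I_0$ exists. Then there is a sequence of generalized pairs $(X_i, B_i+M_i)$ satisfying conditions (1)--(6) for which a coefficient drawn either from $B_i$ or from the nef-Cartier decomposition $M'_i = \sum_j \mu_{i,j} M'_{i,j}$ fails to stabilize. Using that $I$ satisfies the DCC, after passing to a subsequence I may extract a single index $k_0$ (or $j_0$) whose corresponding coefficient is strictly increasing in $i$ with limit $\bar b \in I\cup\{1\}$ (resp.\ $\bar\mu$).

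The first step is to replace each $(X_i, B_i+M_i)$ by a $\qq$-factorial generalized dlt modification, which alters neither the set of coefficients appearing nor the numerical triviality of $K_X+B+M$. The second step is to form a perturbed pair $(X_i, B_i'+M_i')$ by replacing every varying coefficient by its limit value (truncated at $1$ in the boundary case). Since the new coefficients are weakly larger, the perturbed pair is still glc, and the difference $(K_{X_i}+B_i'+M_i') - (K_{X_i}+B_i+M_i)$ is a nonzero effective $\rr$-Cartier class; in particular $K_{X_i}+B_i'+M_i'$ is pseudo-effective and not numerically trivial. The third step is to run a $(K_{X_i}+B_i'+M_i')$-MMP, which by the generalized MMP machinery terminates with either a minimal model or a Mori fiber space.

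In the minimal model case, an argument with the length of extremal rays, or a Shokurov-type connectedness statement applied to the effective gap divisor, forces one of the increments $\bar b - b_{i,k_0}$ or $\bar\mu - \mu_{i,j_0}$ to be bounded below by a positive constant independent of $i$, contradicting the assumed strict convergence. In the Mori fiber space case $\pi_i \colon X_i' \to Z_i$, I restrict to a general fiber $F_i$ and invoke generalized adjunction together with the canonical bundle formula: the restriction $(F_i, B_{F_i}+M_{F_i})$ is a lower-dimensional glc generalized log Calabi--Yau pair whose boundary coefficients lie in $D(I)$ (by Lemma~\ref{lem:coeff-adj}) and whose moduli part admits a nef-Cartier decomposition whose coefficients are controlled by the original $\mu_{i,j}$. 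Induction on $n$, applied to $(F_i, B_{F_i}+M_{F_i})$ once one checks that the offending coefficient on $X_i$ descends to an offending coefficient on $F_i$, delivers the contradiction.

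The main obstacle is the Mori fiber space case, and specifically the control of the moduli part $M_{F_i}$. One must verify that the persistent increase of $\mu_{i,j_0}$ (or of $b_{i,k_0}$ when $B_{i,k_0}$ is horizontal over $Z_i$) on $X_i$ translates into a persistent increase of some coefficient of the boundary or moduli decomposition on $F_i$, and that the nef-Cartier structure of the moduli part survives the restriction. This is precisely the technical heart of~\cite{BZ16}: a refined generalized canonical bundle formula is developed so that the induction on dimension goes through, reducing the statement eventually to the classical global ACC of~\cite{HMX14}.
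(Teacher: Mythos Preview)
The paper does not supply a proof of this proposition at all: it is quoted verbatim as~\cite[Theorem 1.5]{BZ16} and used as a black box. So there is nothing in the paper to compare your argument against beyond the remark that this is the Birkar--Zhang global ACC for generalized pairs.

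That said, your sketch contains a genuine error that would prevent it from going through even as an independent proof. You write that after replacing each varying coefficient by its (larger) limit value, ``the perturbed pair is still glc''. This is false: enlarging the coefficients of the boundary can only \emph{decrease} log discrepancies, so the limit pair $(X_i,B_i'+M_i')$ need not be generalized log canonical. (A simple example: on $\mathbb{A}^2$ with $C$ a cuspidal cubic, $(\mathbb{A}^2,\tfrac{2}{3}C)$ is lc but the limit $(\mathbb{A}^2,C)$ is not.) Consequently you cannot run the MMP for $K_{X_i}+B_i'+M_i'$ in the category of generalized lc pairs, and the dichotomy ``minimal model or Mori fiber space'' is not available.

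The actual strategy in~\cite{HMX14} and~\cite{BZ16} goes in the opposite direction. One \emph{decreases} the offending coefficient slightly, so that $K_{X_i}+B_i'+M_i'$ becomes not pseudo-effective while the pair remains glc (indeed gdlt after modification). Running this MMP necessarily terminates in a Mori fiber space; one then analyzes the general fiber, or performs adjunction to a horizontal log canonical center, and combines this with the ACC for generalized lct's in the same dimension (the two statements are proved simultaneously by induction on $n$). Your Mori fiber space discussion is closer to the truth, but the mechanism producing the Mori fiber space is wrong, and the ``minimal model case'' you describe does not actually arise.
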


We note that condition $(4)$ is written to avoid trivial counter-examples where we add trivial summands to $M'$ with
arbitrary coefficients in $I$.
Throughout the rest of the paper, we may assume that no
$M'_j$ is numerically trivial,
unless otherwise stated.

\section{Global ascending chain condition}

In this section, we prove the main projective result of this paper. 
We prove the global ascending chain condition for 
log Calabi--Yau pairs
of bounded coregularity.

\begin{theorem}\label{thm:global-acc-coreg-c}
Fix a positive integer $c$ and a set $I$ of nonnegative real numbers satisfying the DCC. Then there exists a finite subset $I_0\subseteq I$ such that if $(X,B+M)$ satisfies the following conditions:
\begin{enumerate}
    \item the generalized pair $(X,B+M)$ is generalized log canonical;
    \item the coefficients of $B$ belong to $I$;
    \item we can write $M'=\sum_j \mu_j M'_j$ where $\mu_j\in I$
    and each $M'_j$ is a nef Cartier divisor; 
    \item the numerical equivalence $K_X+B+M\equiv 0$ holds; and
    \item the variety $X$ is projective with an upper bound ${\rm coreg}(X,B+M)\leq c$;
\end{enumerate}
then ${\rm coeff}(\Delta)\subseteq I_0$
and the $\mu_j$ belong to $I_0$.
\end{theorem}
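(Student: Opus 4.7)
The plan is to argue by contradiction, reducing to Proposition~\ref{prop:global-acc-dim-n} via a dimension-lowering procedure that preserves the coregularity bound. Suppose there is a sequence of generalized pairs $(X_i,B_i+M_i)_{i\geq 1}$ satisfying conditions (1)--(5) but whose coefficients of $B_i$ together with the $\mu_j$'s do not lie in any common finite subset of $I$. After passing to a subsequence, we may assume there exists either a sequence of prime components $D_i\subseteq X_i$ with strictly increasing $d_i:={\rm coeff}_{D_i}(B_i)\in I$, or a sequence of nef Cartier summands $M'_{i,j_i}$ with strictly increasing coefficients $\mu_{i,j_i}\in I$. If $\dim X_i$ were bounded, we would finish immediately by Proposition~\ref{prop:global-acc-dim-n}. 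So, we may assume $\dim X_i>c$ for all $i$.

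First I would pass to a generalized dlt modification $\phi_i\colon(Y_i,B_{Y_i}+M_{Y_i})\to(X_i,B_i+M_i)$. By Lemma~\ref{coregularity-and-dimension}, $\lfloor B_{Y_i}\rfloor\neq\emptyset$. Let $D_{Y_i}$ denote the strict transform of $D_i$ (in the moduli case, proceed analogously with the trace of the problematic nef Cartier divisor). If $D_{Y_i}$ meets some component $S_i\subseteq\lfloor B_{Y_i}\rfloor$---which is automatic, by Lemma~\ref{lem:dlt-intersection}, whenever $D_i$ meets a generalized lc center of $(X_i,B_i+M_i)$---I perform generalized adjunction to $S_i$. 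Then Lemma~\ref{lem:coreg-adjunction} shows the new pair $(S_i,B_{S_i}+M_{S_i})$ has coregularity at most $c$ (as it remains log Calabi--Yau by restriction), and Lemma~\ref{lem:coeff-adj} places the new coefficients in $D(I)\cup D_{d_i}(I)$ while tracking $d_i$ through a prime component $Q_i\subseteq D_{Y_i}\cap S_i$ with ${\rm coeff}_{Q_i}(B_{S_i})\in D_{d_i}(I)$.

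If instead $D_{Y_i}\cap\lfloor B_{Y_i}\rfloor=\emptyset$, I run a $(K_{Y_i}+B_{Y_i}-\varepsilon_i S_i+M_{Y_i})$-MMP over $\operatorname{Spec}\kk$ for a fixed $S_i\subseteq\lfloor B_{Y_i}\rfloor$ and small $\varepsilon_i>0$; using $K_{Y_i}+B_{Y_i}+M_{Y_i}\equiv 0$, this is the same as running the $(-\varepsilon_i S_i)$-MMP. Since $-S_i$ is not pseudo-effective globally (being anti-effective), after finitely many steps either the strict transforms of $D_{Y_i}$ and $S_i$ meet on the output model, in which case I return to the previous adjunction step, or the MMP terminates with a Mori fiber space $Y_i'\to T_i$ on which $-S_i'$ is relatively ample. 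In the latter case both $S_i'$ and $D_{Y_i}'$ are relative multisections and the general fiber $F$ is a rational curve; passing to $F$ and applying adjunction on the general fiber, Lemma~\ref{lem:coreg-gen-fiber} keeps the coregularity of the induced pair bounded by $c$, and the problematic coefficient descends to the fiber. Iterating the whole procedure at most $\dim X_i-(c+1)$ times produces a lower-dimensional sequence $(Z_i,B_{Z_i}+M_{Z_i})$ of generalized log Calabi--Yau pairs of bounded dimension, with coefficients in a DCC set obtained from $I$ by finitely many applications of the operators $D$ and $D_{\bullet}$ (DCC is preserved by Lemma~\ref{lem:DI-DCC} and Lemma~\ref{lem:D_d-induction}), and carrying a distinguished prime component (or moduli summand) whose coefficient is still strictly increasing in $i$. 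Proposition~\ref{prop:global-acc-dim-n} then yields a finite subset capturing these tracked coefficients, and Lemma~\ref{lem:finite-to-finite} pulls this finiteness back to the original set, contradicting strict monotonicity.

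The main obstacle is the dichotomy in the second step: guaranteeing that the MMP either produces an intersection of the two distinguished divisors or a Mori fibration exhibiting both as multisections, and then \emph{tracking the problematic coefficient downward} through both the adjunction and the fiber restriction. The adjunction/MMP bookkeeping for the moduli part (Lemma~\ref{lem:coeff-adj}(iii)) and the preservation of the coregularity bound along each step are routine given the prior lemmas, but making the reduction uniform in $i$---so that the lower-dimensional sequence genuinely violates ACC on coefficients---requires a careful transfer argument (the analogue of the statement called ``\texttt{CY-coreg-to-dim}'' in the sketch) which I would isolate as a separate lemma before running the induction.
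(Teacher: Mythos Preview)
Your proposal is essentially the paper's own argument: contradiction via a strictly increasing sequence of coefficients, dlt modification, running the $(K_Y+B_Y-\varepsilon S+M_Y)$-MMP to force an intersection or a Mori fiber space, adjunction/restriction to cut dimension while tracking a coefficient in $D_d(I)$, and finally invoking Proposition~\ref{prop:global-acc-dim-n} together with Lemma~\ref{lem:finite-to-finite}. You also correctly anticipate isolating the reduction as a separate lemma, which is exactly Lemma~\ref{lem:CY-coreg-to-dim}.

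The one inaccuracy is your description of the Mori fiber space outcome: it is \emph{not} true in general that the general fiber is a rational curve or that $D'$ is a multisection. The paper's case analysis (Cases~2a, 2b, 3 of Lemma~\ref{lem:CY-coreg-to-dim}) shows that if the fiber has dimension $\geq 2$, or if the fiber is a curve but $D'$ is vertical, then one can always exhibit a curve $C\subset D'$ contracted over the base with $C\cdot S'>0$, forcing $D'\cap S'\neq\emptyset$ and sending you back to the adjunction branch; only when the fiber is one-dimensional \emph{and} $D'$ is horizontal do you restrict to the general $\mathbb{P}^1$. Also, you should not invoke Lemma~\ref{lem:coreg-gen-fiber} in that last case (its hypothesis on non-klt centers dominating the base need not hold); the point is simply that the fiber is already one-dimensional, so no coregularity bound is needed there. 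With this refined dichotomy your argument goes through and coincides with the paper's.
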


The following lemma allows to replace a log Calabi--Yau pair $(X,B+M)$ by one whose dimension is at most the coregularity of $(X,B+M)$. 
Furthermore, we are able to control the coefficients of the boundary and moduli part of the new log Calabi--Yau pair.

\begin{lemma}\label{lem:CY-coreg-to-dim}
Given a generalized log Calabi--Yau pair $(X,B+M)$ satisfying the following conditions:
\begin{enumerate}
    \item The coefficients of $B$ belong to $I$;
    \item we can write $M'=\sum_j \mu_j M'_j$ where $\mu_j\in I$
    and each $M'_j$ is a nef Cartier divisor; and 
    \item the variety $X$ is projective with an upper bound ${\rm coreg}(X,B+M)\leq c$.
\end{enumerate}
Let $d$ be either one of the $\mu_j$
or in ${\rm coeff}(B)$.
Then, we can construct a generalized log Calabi--Yau pair 
$(Z,B_Z+M_Z)$ of coregularity at most $c$
satisfying the following conditions:
\begin{enumerate}
    \item[(i)] the variety $Z$ has dimension at most $c+1$;
    \item[(ii)] the coefficients of $B_Z$ are in $D(I)$; and
    \item[(iii)] we can write 
    $M'_Z=\sum_j \eta_j M'_{Z,j}$ where
    $\eta_j\in I$ and each $M'_{Z,j}$ is a nef Cartier divisor.
\end{enumerate}
Furthermore, 
we have that either some
$d'\in {\rm coeff}(B_Z)$
satisfies $d'\in D_d(I)$
with $d'<1$ 
or $\eta_j=d$ for some $j$.
If $d\in {\rm coeff}(B)$, then the former case holds.
\end{lemma}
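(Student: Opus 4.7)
The plan is to argue by induction on $\dim X$, reducing dimension by one at each step via generalized adjunction while tracking the distinguished coefficient $d$. If $\dim X \leq c+1$ there is nothing to do: one takes $(Z,B_Z+M_Z)=(X,B+M)$. Otherwise, since $\operatorname{coreg}(X,B+M)\leq c<\dim X$, Lemma~\ref{coregularity-and-dimension} applied to a generalized dlt modification $(Y,B_Y+M_Y)\to (X,B+M)$ produces a prime component $S_Y\subseteq \lfloor B_Y\rfloor$. I would then perform generalized adjunction to a well-chosen $S_Y$, appeal to Lemma~\ref{lem:coreg-adjunction} to preserve the coregularity bound, and use Lemma~\ref{lem:coeff-adj} to control the coefficients of $(S_Y,B_{S_Y}+M_{S_Y})$ in $D(I)$, with the trace of $d$ tracked either through a boundary coefficient in $D_d(I)$ or through the moduli decomposition.

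The delicate point is to choose $S_Y$ so that it meets the distinguished divisor. Let $D$ denote either the prime component of $B$ with $\operatorname{coeff}_D(B)=d$, or a component of some $M'_j$ with $\mu_j=d$, and let $D_Y$ be its strict transform on $Y$. If $D$ passes through a generalized log canonical center of $(X,B+M)$, Lemma~\ref{lem:dlt-intersection} directly yields a component $S_Y\subseteq \lfloor B_Y\rfloor$ with $D_Y\cap S_Y\neq \emptyset$, so Lemma~\ref{lem:coeff-adj}(ii)--(iii) provides the required trace of $d$; the bound $d'<1$ in conclusion (ii) follows from the adjunction formula~\eqref{eq:coeff-under-adj} since the relevant coefficient of $B$ is strictly less than one (the boundary case with $d=1$ being handled by taking $S_Y=D_Y$).

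When $D_Y$ does not meet $\lfloor B_Y\rfloor$, I would pick any component $S_Y\subseteq \lfloor B_Y\rfloor$ and run a $-S_Y$-MMP over $X$ (equivalently, a $(K_Y+B_Y-\epsilon S_Y+M_Y)$-MMP for $\epsilon>0$ small), using Lemma~\ref{lem:coreg-mmp} to preserve the coregularity at each step. As outlined in the introduction, this MMP terminates in one of two configurations: either the strict transforms of $D$ and $S_Y$ intersect on some model (reducing us to the previous paragraph on that birational model), or it produces a Mori fiber space $X'\to W$ of relative dimension one on which both strict transforms are multi-sections of the $\mathbb{P}^1$-bundle. In the latter scenario, I would apply generalized adjunction to a general fiber $F$ and invoke Lemma~\ref{lem:coreg-gen-fiber} (together with the observation that every generalized non-klt center dominates $W$ after the MMP) to conclude $\operatorname{coreg}(F,B_F+M_F)\leq c$; since $D$ is a multi-section, $D\cap F$ is a nonempty collection of points carrying the trace of $d$, again through Lemma~\ref{lem:coeff-adj}.

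The main obstacle is establishing this dichotomy cleanly: one must verify that the $-S_Y$-MMP does not prematurely contract $D$ and that the final outcome really is either an intersection on a log canonical model or a Mori fiber space with the claimed multi-section property. Granting this, each reduction drops the dimension by one, so after finitely many iterations we arrive at a generalized log Calabi--Yau pair $(Z,B_Z+M_Z)$ of dimension at most $c+1$, with coefficients of $B_Z$ in $D(I)$ by Lemma~\ref{lem:DI-DCC} and Lemma~\ref{lem:coeff-adj}, with moduli decomposable via nef Cartier divisors with coefficients in $I$, with coregularity still at most $c$, and carrying the required trace of $d$ either as a boundary coefficient in $D_d(I)$ strictly less than one, or as a moduli coefficient $\eta_j=d$.
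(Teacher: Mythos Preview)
Your overall strategy matches the paper's: pass to a generalized dlt modification $(Y,B_Y+M_Y)$, run a $(K_Y+B_Y-\epsilon S_Y+M_Y)$-MMP, and argue that one ends up either in a situation where the strict transforms of $D$ and $S_Y$ meet (allowing adjunction to a component of $\lfloor B\rfloor$) or in a Mori fiber space of relative dimension one to whose general fiber one restricts. Two points need correction.

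First, the MMP must be run absolutely, not ``over $X$''. Since $Y\to X$ is birational, a relative MMP over $X$ terminates in a minimal model over $X$, not in a Mori fiber space $X'\to W$ with $\dim W<\dim Y$. The paper uses that $K_Y+B_Y+M_Y\equiv 0$ globally, so $K_Y+B_Y-\epsilon S_Y+M_Y$ is not pseudoeffective and the absolute MMP ends in a Mori fiber space.

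Second, and more substantively, you identify ``verifying that the MMP does not prematurely contract $D$'' as the main obstacle, but this is the wrong target: $D$ \emph{can} be contracted, and nothing rules this out. The paper handles this as its own case. If some step $\pi_1\colon Y_1\to Y_2$ contracts the strict transform $D_1$, then since every step of the MMP is $S$-positive, a contracted curve $C\subset D_1$ satisfies $C\cdot S_1>0$, forcing $D_1\cap S_1\neq\emptyset$ on $Y_1$; one then passes to a dlt modification of $(Y_1,B_1+M_1)$ via Lemma~\ref{lem:dlt-intersection} and performs adjunction there. If $D$ survives to the Mori fiber space $Y_1\to Z$, the paper further splits according to whether the general fiber has dimension at least two (where $\rho(Y_1/Z)=1$ makes $S_1$ ample over $Z$ and $D_1$ either vertical or ample over $Z$, in either case forcing $D_1\cap S_1\neq\emptyset$) or dimension one (your $\mathbb{P}^1$-fiber case, where one restricts directly without needing Lemma~\ref{lem:coreg-gen-fiber}). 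Your dichotomy is correct in spirit, but the argument needed to establish it is not the one you propose.
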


\begin{proof}
By Lemma~\ref{coregularity-and-dimension}, if
$(X,B+M)$ is generalized klt, 
then the dimension of $X$ is at most $c$, so we are done.
Therefore, we can assume that $(X,B+M)$ is generalized lc but not generalized klt. By Lemma~\ref{lem:D_d-induction}, we only have to prove that there exists a pair of dimension strictly smaller than $\dim X$
satisfying the conditions (i)-(iii).
Inductively, by Lemma~\ref{lem:D_d-induction} we get the desired pair.

We start by taking a generalized dlt modification $(Y,B_Y+M_Y)$ of $(X,B+M)$.
Let $S_Y$ be a prime component of $\lfloor B_Y\rfloor$.
Without loss of generality, we assume that 
$d \in {\rm coeff}(B_Y)$. 
The case in which $d$ is one of the $\mu_j$
proceeds analogously.
We call $D_Y$ a prime component of 
the divisor $B_Y$ that has coefficient $d$.
Pick $\epsilon>0$ to be small enough, we run a $(K_Y+B_Y-\epsilon S_Y+M_Y)$-MMP.
As $K_Y+B_Y+M_Y \equiv 0$, all the steps of the minimal model program are $S_Y$-positive.
By Lemma~\ref{lem:coreg-mmp} the coregularity of $(Y,B_Y+M_Y)$ remains constant.\\

\noindent \textit{Claim:} The divisor $S_Y$ does not get contracted by any step of the minimal model program.

\begin{proof} 
Suppose $S_Y$ gets contracted.
Without loss of generality,
as all our conditions are preserved by the MMP, 
we may assume it is contracted by the very first step of the MMP.
Then, we have a birational contraction
$\phi \colon Y\rightarrow Y'$.
As $K_Y+B_Y+M_Y \equiv 0$, we have that 
\[K_Y+B_Y+M_Y =\phi^{\ast} \phi_{\ast} (K_{Y}+B_{Y}+M_{Y})= \phi ^{\ast} (K_{Y'}+B_{Y'}+M_{Y'}). \]
Here, as usual, $B_{Y'}$ (resp. $M_{Y'}$)
is the push-forward of $B_Y$ (resp $M_Y$) to $Y'$.
Therefore $a_{S_Y}(Y',B_{Y'}+M_{Y'})=0$. However, we have 
\[
\epsilon =a_{S_Y}(Y,B_Y -\epsilon S_Y+M_Y)\leq a_{S_Y}(Y',B_{Y'}+M_{Y'}),
\]
as the log discrepancies only increase after a step of the MMP. A contradiction.
\end{proof}

As $K_Y+B_Y+M_Y \equiv 0$ and $Y$ is projective, we have that $K_Y+B_Y-\epsilon S_Y +M_Y$ is not pseudoeffective.
By~\cite[Lemma 4.4]{BZ16}, we know that this minimal model program
must terminate with a Mori fiber space.
We will split into different cases,
depending on the steps and outcome of the minimal model program.\\

\noindent\textit{Case 1:} The strict transform of the divisor $D_Y$ gets contracted by a step of the MMP.\\

Let $\pi_1:Y_1 \rightarrow Y_2$ be the step where the strict transform of $D_Y$ is contracted.
We denote by $B_1$ (resp. $M_1$) the push-forward
of $B_Y$ (resp. $M_Y$) to the model $Y_1$.
Analogously, we denote by $S_1$ (resp. $D_1$) the push-forward of $S_Y$ (resp. $D_Y$) on $Y_1$.
As $\pi_1$ contracts $(K_{Y_1}+B_1-\epsilon{S_1}+M_1)$-negative curves, we have that $\pi_1$ contracts $S_1$-positive curves. As some contracted curve $C$ must be contained in $D_1$, we have that 
\[
D_1\cap S_1 \supseteq C \cap S_1 \neq \emptyset.
\]
By Lemma~\ref{lem:dlt-intersection},
may replace $(Y_1,B_1+M_1)$ with a dlt modification
and assume that $D_1$ intersects a component $S$
of $\lfloor B_1\rfloor$. Let $(S,B_{S}+M_{S})$ be the generalized pair obtained by adjunction of $K_{Y_1}+B_1+M_1$ to $S$.
By Lemma~\ref{lem:coreg-adjunction}, $(S,B_{S}+M_{S})$ has coregularity at most $c$.

The following conditions are satisfied:
\begin{enumerate}
    \item[(i)] the generalized pair
    $(S,B_{S}+M_{S})$ is generalized log Calabi--Yau, 
    \item[(ii)] the coefficients of $B_{S}$ belong to $D(I)$, and
    \item[(iii)] we can write
    $M_{S'}=\sum_j \eta_j M_{S',j}$ 
    where $\eta_j\in I$
    and each $M_{S',j}$ is a nef Cartier divisor.
\end{enumerate}
Observe that  $D_1\cap S\neq\emptyset$
and ${\rm coeff}_{D_1}(B_1)=d$.
By Lemma~\ref{lem:coeff-adj},
we conclude that any prime component $Q$
of $D_1\cap S$ will satisfy 
that
$d'={\rm coeff}_{Q}(B_S)\in D_d(I)$.
Since we are performing generalized dlt adjunction,
we have that $d'<1$.
This finishes the proof in the case that the minimal model program eventually contracts $D$.\\

\noindent\textit{Case 2:} The divisor $D$ is never contracted in the MMP and the MMP terminates with a Mori fiber space $\phi:Y_1 \rightarrow Z$ with general fiber of dimension at least 2.\\ 

Here, we have that $\rho(Y_1/Z)=1$. Hence $S_1$ is ample over $Z$, where $S_1$ is the push-forward of $S_Y$ to $Y_1$.
We denote by $B_1$ (resp $M_1$) the push-forward of $B_Y$ (resp $M_Y$) to $Y_1$.
Analogously, we denote by $D_1$
the push-forward of $D_Y$ to $Y_1$.
As $D_1$ is effective it can either be vertical over the base or ample over the base. We argue in each of these cases:\\

\noindent\textit{Case 2a:} The divisor $D_1$ is numerically trivial over the base $Z$.\\

The divisor $D_1$ contains a curve $C$ that gets contracted to a point in $Z$. Indeed, we can take a general point $P\in \phi(D_1)$ and a curve $C\subset \phi ^{-1}(P)$. Therefore $C\cap D_1 \neq \emptyset$, but since $D_1$ is vertical over the base, we have that $C \cdot D_1=0$, hence $C\subset D_1$ is the desired curve.
As $S_1$ is ample over the base $C\cdot S_1 >0$, so $C\cap S_1 \neq \emptyset$, therefore $D_1\cap S_1 \neq \emptyset$. 
Then, we can replace 
$(Y_1,B_1+M_1)$
with a dlt modification,
perform adjunction to a component
$S$ of $\lfloor B_1\rfloor$,
and proceed as in the first case.\\

\noindent\textit{Case 2b:} The divisor $D_1$ is ample over the base $Z$.\\

Let $F$ be the general fiber.
The divisors $D_1|_{F}$ and $S_1|_{F}$ are ample in $F$. As ${\rm dim}(D)|_{F}\geq 1$, we can find a curve $C\subset D_1|_{F}$. Since $S_1|_{F}$ is ample, we have that $C\cdot S_1|_{F}>0$. Therefore $C\cap S_1\neq \emptyset$, so $S_1\cap D_1 \neq \emptyset$. 
Then, we can replace $(Y_1,B_1+M_1)$ 
with a dlt modification,
perform adjunction to a component $S$ of $\lfloor B_1\rfloor$, and proceed as in the first case.\\

\noindent\textit{Case 3:} The divisor $D$ is never contracted in the MMP and the MMP terminates with a Mori fiber space $Y_1 \rightarrow Z$ with general fiber of dimension one.\\

If $D_1$ is vertical over the base, we can proceed as in the case 2a.
Otherwise, we restrict $(Y_1,B_1+M_1)$ to the general fiber  and we obtain a one dimensional generalized log CY pair 
$(\pp^1,B_{\pp^1}+M_{\pp^1})$
so that $d \in {\rm coeff}(B_{\pp^1})$
is contained in $D_d(I)$.
\end{proof}

\begin{remark}\label{remark:reduce-to-dim-c}
{\em 
In the proof of Lemma~\ref{lem:CY-coreg-to-dim}, 
if $c=0$, then we can always take $Z\simeq \pp^1$.
On the other hand, if $c\geq 1$, 
then the projective variety $Z$
can be chosen to have dimension $c$ or $Z\simeq \pp^1$.
}
\end{remark}

\begin{proof}[Proof of Theorem \ref{thm:global-acc-coreg-c}]
Suppose there is no finite set $I_0$. Then there exists an infinite set
$I'\subseteq I$ such that for each $i \in  I'$,
we have that 
\begin{itemize}
    \item the coefficient $i$ is in ${\rm coeff}(B)$ for some generalized pair $(X,B+M)$
    satisfying conditions (1)-(6) of the statement, or
    \item the coefficient $i=\mu_k$ for some generalized pair $(X,B+M)$ satisfying conditions (1)-(6) of the statement.
\end{itemize}
We take an infinite sequence $(d_j)_{j\geq 0}$ of values $d_j\in I'$ and generalized pairs $(X_j,B_j+M_j)$ satisfying the conditions (1)-(6) of the theorem and either $d_j \in {\rm coeff}(B_j)$
or $d_j=\mu_{k,j}$ for some $k,j$. Since $I$ satisfies the DCC, we can take a subsequence of $(d_j)_{j\geq 0}$ that is strictly increasing. By abuse of notation we may also call this sequence $(d_j)_{j\geq 0}$.

Applying Lemma~\ref{lem:CY-coreg-to-dim} to each
generalized pair $(X_j,B_j+M_j)$, with either $d_j\in {\rm coeff}(B_j)$ or $d_j=\mu_{k,j}$, we obtain a sequence of generalized pairs  $(Z_j,B_{Z_j}+M_{Z_j})$ satisfying the following conditions:
\begin{itemize}
    \item the projective variety $Z_j$ has dimension at most $c+1$;
    \item the generalized pair $(Z_j,B_{Z_j}+M_{Z_j})$ is generalized log Calabai-Yau;
    \item the coefficients of $B_{Z_j}$ are in $D(I)$;
    \item  we can write 
    $M'_Z=\sum_j \eta_{k,j} M'_{Z,j}$ where
    $\eta_{k,j}\in I$ and each $M'_{Z,j}$ is a nef Cartier divisor; and
    \item either $d'_j \in {\rm coeff}(B_{Z_j})$
    with $d'_j\in D_{d_j}(I)$ and $d'_j<1$ 
    or 
    $d'_j=d_j=\eta_{k,j}$.
\end{itemize}
By Lemma~\ref{lem:DI-DCC},
we know that $D(I)$ satisfies the descending chain condition.
If we have that $d'_j=d_j=\eta_{k,j}$ for infinitely many $j$, we get a contradiction
of Proposition~\ref{prop:global-acc-dim-n}.
Otherwise, we  assume that each $d'_j$ is of the following form:
\[
d'_j=\frac{N_j-1+m_{j,1}d_j+\sum_{k\geq 2}m_{j,k}d_{j,k}}{N_j}
\]
for some positive integers $N_j,m_{j,1}$
and $m_{j,k}$,
and that $d'_j<1$.
By Proposition~\ref{prop:global-acc-dim-n}, the coefficients $d'_j$ belong to a finite set 
$I_1$, so we may assume they are all equal to a fixed number $d'\in (0,1)$. As $d' \geq 1- \frac{1}{N_j}$, the integer $N_j$ can take only finitely many values. By abuse of notation, we take a subsequence with constant $N_j=N$.
As $m_{j,1}d_j < 1$, we have that \[m_{j,1} < \frac{1}{d_j}\leq \frac{1}{d_1}.\]
Therefore, $m_{j,1}$ can also take finitely many values and we can assume it to be a constant $m$.
Then, we can write:
\[
\sum_{k\geq 2}m_{j,k}d_{j,k}=Nd'-N+1-md_j.
\]
The left-hand side is in $D(I)$, hence it also satisfies the DCC. However, the right-hand side is strictly decreasing, as the only variable is $d_j$, which we chose to be strictly decreasing.
We have a contradiction, so there exists a finite set $I_0$ with the desired property.
\end{proof}

\section{Proof of the theorems}

In this section, we prove the theorems presented in the introduction.
In subsection~\ref{ss:local-acc}, we prove the local ascending chain condition
for log canonical singularities with bounded coregularity.
In subsection~\ref{ss:lct-bounded-coreg}, we prove the ascending chain condition for log canonical thresholds with bounded coregularity.
In subsection~\ref{ss:coreg-zero-one}, we study the log canonical thresholds of coregularity zero and one.
Finally, in subsection~\ref{ss:proofs}, we study the accumulation points of log canonical thresholds with bounded coregularity.

\subsection{Local ascending chain condition}\label{ss:local-acc}

In this subsection, we prove a local version of the ascending chain condition
for 
generalized log canonical singularities with bounded coregularity.

\begin{theorem}\label{thm:local-acc-coreg-c}
Let $c$ be a positive integer.
Let $I$ be a set of nonnegative real numbers satisfying the descending chain condition.
Then, there exists a finite subset $I_0\subseteq I$ 
such that if $(X,B+M)$ satisfies the following conditions:
\begin{enumerate}
    \item the variety $X$ is normal and quasi-projective;
    \item the generalized pair $(X,B+M)$ is generalized log canonical;
    \item the coefficient of any component of $B$ lies in $I$;
    \item we can write $M'=\sum_j \lambda_j M'_j$ where each $\lambda_j\in I$ and each $M'_j$ is a nef Cartier divisor;
    \item the coregularity of $(X,B+M)$ is at most $c$; and
    \item there exists a minimal generalized non-klt center $Z\subset X$ of $(X,B+M)$ 
    which is contained in every component of $B$
    and no $M'_j$ descends over the generic point of $Z$;
\end{enumerate}
then the coefficient of any component of $B$ lie in $I_0$
and each $\lambda_j$ is contained in $I_0$.
\end{theorem}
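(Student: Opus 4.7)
The plan is a global-to-local reduction to Theorem~\ref{thm:global-acc-coreg-c}: I will cut down from $(X,B+M)$ to a projective generalized log Calabi--Yau pair $(F,B_F+M_F)$ of coregularity at most $c$ whose coefficients encode the data we wish to bound, and then invoke the global ACC. First I would take a generalized dlt modification $\phi\colon (Y,B_Y+M_Y) \to (X,B+M)$. Since $Z$ is a generalized log canonical center of $(X,B+M)$, there is some generalized log canonical place of $(X,B+M)$ with center $Z$; its realization on $Y$ is a component $S_1 \subseteq \lfloor B_Y\rfloor$ with $\phi(S_1)=Z$. Now I would run a $(-S_1)$-MMP over $X$: since $-S_1$ is not pseudo-effective over $X$, this terminates on a model where $-S_1$ is nef over $X$, and the negativity lemma forces the set-theoretic preimage $\phi^{-1}(Z)$ to coincide with $S_1$. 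Replacing the output with a further generalized dlt modification (possibly relabeling a fresh coefficient-one divisor dominating $Z$ as $S_1$), we preserve all these properties, as well as generalized log canonicity and coregularity, since crepant birational modifications leave the coregularity invariant in the spirit of Lemma~\ref{lem:coreg-mmp}.

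Next, hypothesis (6) enters crucially. Every prime component $D$ of $B$ contains $Z$, so its strict transform $D_Y$ meets $\phi^{-1}(Z)=S_1$; similarly, no $M'_j$ descends over $\eta_Z$, so on the model the non-descent locus of each $M'_j$ meets $S_1$. I would then perform generalized adjunction to $S_1$, obtaining $(S_1,B_{S_1}+M_{S_1})$ with $K_{S_1}+B_{S_1}+M_{S_1}$ numerically trivial over $Z$, and take the general fiber $F$ of the induced morphism $S_1 \to Z$ to obtain a projective generalized log Calabi--Yau pair $(F,B_F+M_F)$. Because $Z$ is a \emph{minimal} generalized log canonical center of $(X,B+M)$, every generalized non-klt center of $(S_1,B_{S_1}+M_{S_1})$ dominates $Z$: its image on $X$ is a glc center contained in $Z$, hence equal to $Z$. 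Thus Lemma~\ref{lem:coreg-gen-fiber} applies and gives ${\rm coreg}(F,B_F+M_F) \leq {\rm coreg}(S_1,B_{S_1}+M_{S_1}) - \dim Z \leq c - \dim Z \leq c$.

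Finally, I would track coefficients through the two successive adjunctions using Lemma~\ref{lem:coeff-adj}. A boundary coefficient $d={\rm coeff}_D(B)$ produces, along any prime component $Q$ of $D_Y \cap S_1$, a coefficient ${\rm coeff}_Q(B_{S_1}) \in D_d(I)$, while a moduli coefficient $d=\lambda_j$ whose associated $M'_j$ does not vanish along $S_1$ appears as some $\eta_k=d$ in the nef Cartier decomposition of $M'_{S_1}$. Restriction to the general fiber $F$ preserves these coefficients. Since $D(I)$ satisfies the DCC by Lemma~\ref{lem:DI-DCC}, Theorem~\ref{thm:global-acc-coreg-c} applied to $(F,B_F+M_F)$ yields a finite subset $I_0^F \subseteq D(I)$ containing all coefficients of $B_F$ and all moduli coefficients. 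The moduli coefficients $\lambda_j$ of $(X,B+M)$ then lie in the finite set $I_0^F \cap I$, and the boundary coefficients $d \in I$ lie in a finite subset $I_0 \subseteq I$ by Lemma~\ref{lem:finite-to-finite} applied with $J=I_0^F$.

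The hard part will be the $(-S_1)$-MMP together with the subsequent dlt modification: I must verify that after all these birational adjustments the set-theoretic preimage of $Z$ remains $S_1$ and that every component of $B$ and every $M'_j$ continues to meet $S_1$ on the final model, so that its coefficient actually propagates through adjunction to $S_1$ and restriction to $F$. Some care is also needed to ensure that the further dlt modification does not lose the property that $S_1$ (or its replacement) is a divisorial lc place with center $Z$. Once these intersection and modification bookkeeping issues are settled, the coregularity bound coming from minimality of $Z$ together with the coefficient tracking of Lemma~\ref{lem:coeff-adj} reduces the local ACC statement to Theorem~\ref{thm:global-acc-coreg-c}.
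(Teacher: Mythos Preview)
Your proposal is correct and follows essentially the same route as the paper: a generalized dlt modification, an $(-S_1)$-MMP over $X$ (equivalently a $(K_Y+B_Y-\epsilon S_1+M_Y)$-MMP) to force $\phi^{-1}(Z)=S_1$ set-theoretically, a further dlt modification, adjunction to $S_1$, restriction to a general fiber over $Z$, and then Theorem~\ref{thm:global-acc-coreg-c} combined with Lemma~\ref{lem:finite-to-finite}. The bookkeeping you flag as ``the hard part'' is exactly what the paper spells out: on the further dlt modification $Y_1\to Y_0$ one writes $\pi_1^*S = S_{Y_1}+\sum_j \lambda_j T_j$ and shows, via the same nefness/negativity argument, that $\psi^{-1}(Z)=S_{Y_1}\cup\bigcup_{\lambda_j>0}T_j$, so some coefficient-one divisor $S_1$ with center $Z$ meets $D_{Y_1}$ dominantly over $Z$; you also want to note the trivial edge case where $Z$ is itself a divisor, so $B=Z$ and the coefficient is $1$.
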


\begin{proof}
Assume $1\in I$. Let $J\subseteq I$ be the finite set we obtain in Theorem \ref{thm:global-acc-coreg-c}. 
Let 
\[
I_1 = 
\left\{
i\in I \mid \frac{m-1+ki+f}{m}\in J\cap[0,1] \text{ for some }m,k\in \mathbb{Z}_{> 0}, f\in I
\right\},
\] 
which is finite by Lemma~\ref{lem:finite-to-finite}. 
Let $I_0 = I_1\cup \{1\}$.
Without loss of generality, we may assume that $Z$ is a minimal generalized non-klt center.
Let $i\in I$ be the coefficient of a component $D$ of $B$.
We show that $i$ must belong to the finite subset $I_0$.
An analogous argument shows that the $\lambda_j$ belong to $J$.
By assumption, $Z$ is contained in the support of $D$. 
If $Z$ is a divisor on $X$, then $B=Z$ and hence $i=1$.

Suppose that $Z$ is not a divisor on $X$. 
Let $\pi:(Y,B_Y+M_Y)\to (X,B+M)$ be a generalized dlt modification, where 
$B_Y$ is the strict transform of $B$.
We denote by $D_Y$ the strict transform of $D$ on $Y$.
We may choose $\pi$ so that the center of some component $S$
of $\lfloor B_Y\rfloor$ on $X$ is $Z$. \\
 
\noindent\textit{Case 1:} The divisor $D_Y$ intersects $S$ over the generic point of $Z$.\\  

Adjunction formula gives us:
\[
(K_Y+B_Y+M_Y)|_S \sim_\rr K_S+B_S+M_S,
\]
where $(S,B_S+M_S)$ is generalized dlt, the coefficients of $B_S$ belong to $D(I)$, and some component of $B_S$ has a coefficient in $D_i(I)$.
Since $Z$ is a minimal non-klt center of $(X,B+M)$, 
every non-klt center of $(Y,B_Y+M_Y)$ dominates $Z$. 
By adjunction, 
every non-klt center of $(S,B_S+M_S)$ also dominates $Z$. 
Let $(S_z,B_{S_z}+M_{S_z})$ 
be the generalized pair 
induced on a general fiber $S_z$
of the morphism $S\to Z$. 
Then $(S_z,B_{S_z}+M_{S_z})$ is gdlt, $K_{S_z}+B_{S_z}+M_{S_z}\equiv 0$, and some component of $B_{S_z}$ has
a coefficient in $D_i(I)$ (since $B_Y\cap S$ dominates $Z$).
Furthermore, by Lemma \ref{lem:coreg-gen-fiber}, the coregularity of $(S_z,B_{S_z}+M_{S_z})$ is at most $c$. 
By Theorem~\ref{thm:global-acc-coreg-c}, the coefficient of $B_{S_z}$ lies in $J$ and hence $i\in I_0$.\\

\noindent\textit{Case 2:} The divisor $D_Y$ does not intersect $S$ over the generic point of $Z$.\\

Pick $\epsilon>0$ to be sufficiently small and run a $(K_Y+B_Y-\epsilon S+M_Y)$-MMP on $Y$ over $X$, 
with scaling of an ample divisor. 
Since, 
\[
K_Y+B_Y-\epsilon S+M_Y = \pi^*(K_X+B+M) -\epsilon S \equiv_{X} - \epsilon S,
\]
every step of this MMP is $S$-positive.
Furthermore, this MMP terminates with a minimal model $Y_0$ over $X$ by~\cite[Lemma 4.4]{BZ16}. 
The divisor $S$ is not contracted in this MMP (see Claim of the proof of Lemma~\ref{lem:CY-coreg-to-dim}). 
No component of the divisor $B_Y$ is contracted in this MMP since 
their center on $X$ are divisors. 

Let $S_{Y_0}$ be the push-forward of $S$ on $Y_0$.
Analogously, we denote by $B_{Y_0}$ (resp. $M_{Y_0}$ and $D_{Y_0}$)
the push-forward of $B_Y$ (resp. $M_Y$ and $D_Y$) to $Y_0$.
We claim that $S_{Y_0}$ is the set theoretic preimage of $Z$ with respect to the structure morphism $\pi_0: Y_0\rightarrow X$. From our construction, $\pi_0 (S_{Y_0}) = \pi(S) = Z$, so it suffices to prove $\pi_0^{-1}(Z)\subseteq S_{Y_0}$. Suppose there is a component of $\pi_0^{-1}(Z)$ which is not contained in $S_{Y_0}$. Then, we can choose a curve $C\not\subseteq S_{Y_0}$ such that $\pi_0(C)$ is a point in $Z$ and $C\cap S_{Y_0}\neq \varnothing$. However, $-S_{Y_0}$ is nef over $X$ because $(Y_0,B_{Y_0}-\epsilon S_{Y_0}+M_{Y_0})$ is a minimal model and $K_{Y_0}+B_{Y_0}+M_{Y_0}$ is numerically trivial over $X$. 
This is a contradiction, so the claim follows.

We note that the image of $D_{Y_0}$ in $X$ dominates $Z$, 
so $D_{Y_0}\cap S_{Y_0}$ also dominates $Z$ by the above claim. By Lemma~\ref{lem:coreg-mmp}, the coregularity of $(Y_0,B_{Y_0}+M_{Y_0})$ is equal to the coregularity of $(Y,B_Y+M_Y)$.
Thus, we may replace $(Y,B_Y+M_Y)$ by $(Y_0,B_{Y_0}+M_{Y_0})$ 
to assume that $D_Y$ intersects $S$ over the generic point of $Z$, 
$-S$ is nef over $X$, 
and $\pi^{-1}(Z) = S$ holds set-theoretically.
However, we may lose the property that $(Y,B_Y+M_Y)$ is generalized dlt. 

Let $\pi_1\colon (Y_1, B_{Y_1}+M_{Y_1})\to (Y,B_Y+M_Y)$ be a generalized dlt modification.
Let $F=T_1+\dots+T_n$ be the sum of all the $\pi_1$-exceptional divisors.
Let $S_{Y_1}$ be the strict transform of $S$ on $Y_1$.
Write 
\[
\pi_1^* S = S_{Y_1} + \sum_{j=1}^n \lambda_j T_j,
\]
for some $\lambda_j\geq 0$. 
Let $\psi\colon Y_1\rightarrow X$ be the induced
projective birational morphism.
We show that
\[
\psi^{-1}(Z) = S_{Y_1}\cup \bigcup_{\lambda_j>0} T_j,
\]
holds set-theoretically.
We apply a similar argument as before. It suffices to show that 
\[
\psi^{-1}(Z) \subseteq  S_{Y_1}\cup \bigcup_{\lambda_j>0} T_j.
\]
Suppose that the inclusion does not hold. Then, we can find a curve $C$ such that
\begin{itemize}
    \item $C \not \subseteq {\rm supp}(S_{Y_1 }\cup \bigcup_{\lambda_j>0} T_j)$,
    \item $\psi(C)$ is a point in $Z$,
    \item $C$ intersects non-trivially with either $S_{Y_1}$ or some $T_j$ for some $\lambda_j>0$.
\end{itemize}
This implies that
\[
\pi_1^*S \cdot C = \left(S_{Y_1} + \sum_{j=1}^n \lambda_j T_j\right) \cdot C>0,
\]
which contradicts the fact that $-S$ is nef over $X$.
Hence, there is a prime divisor $S_1$ in $\{S_{Y_1}\}\cup \{T_j: \lambda_j>0\}$ 
for which $D_{Y_1}\cap S_1$ dominates $Z$. 
This reduces to the first case. 
Hence, we conclude that $i\in I_0$.
\end{proof}

\subsection{Log canonical thresholds
with bounded coregularity}
\label{ss:lct-bounded-coreg}

In this section, we state 
and we prove the 
ascending chain condition
for generalized log canonical thresholds 
with bounded coregularity.

\begin{theorem}\label{theorem:ACC-coreg}
Let $c$ be a positive integer. 
Let $I$ and $J$ be two sets of nonnegative real numbers satisfying the descending chain condition.
Let ${\rm LCT}_c(I,J)$ be the 
set of thresholds 
$t={\rm lct}(X,B+M;\Gamma+N)$
where the following conditions are satisfied:
\begin{itemize}
    \item $(X,B+M)$ is a generalized log canonical pair;
    \item the coefficients of $B$ lie in $I$
    and 
    the coefficients of $\Gamma$ lie in $J$;
    \item we can write 
    $M'=\sum_j \lambda_j M'_j$ where $\lambda_j\in I$
    and each $M'_j$ is a nef Cartier divisor;
    \item we can write 
    $N'=\sum_j \mu_j N'_j$ where $\mu_j\in J$
    and each $N'_j$ is a nef Cartier divisor; and
    \item the coregularity of
    $(X,B+M+t(\Gamma+N))$ is at most $c$.
\end{itemize}
Then, the set
${\rm LCT}_c(I,J)$ satisfies the ACC.
\end{theorem}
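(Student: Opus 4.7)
The plan is to argue by contradiction and reduce the statement to the global ACC of Theorem~\ref{thm:global-acc-coreg-c}, following closely the strategy already employed in the proof of Theorem~\ref{thm:local-acc-coreg-c}. Suppose there is a strictly increasing sequence $(t_i)_{i\geq 1}$ in ${\rm LCT}_c(I,J)$. For each $i$, fix a witnessing generalized log canonical pair $(X_i,B_i+M_i)$ and a divisor $\Gamma_i+N_i$ realizing $t_i={\rm lct}(X_i,B_i+M_i;\Gamma_i+N_i)$, and localize at a point $x_i\in X_i$ where $(X_i,B_i+M_i+t_i(\Gamma_i+N_i))$ is strictly generalized log canonical. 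Because $t_i$ is exactly the threshold, some non-klt place of this pair must see $\Gamma_i+N_i$; we can therefore pick a minimal generalized non-klt center $Z_i\ni x_i$ that is either contained in $\mathrm{Supp}(\Gamma_i)$ or that meets the non-descent locus of some $N'_{i,j}$ over $\eta_{Z_i}$.

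Now I follow the construction used in Theorem~\ref{thm:local-acc-coreg-c}. Take a generalized dlt modification $\phi_i\colon Y_i\to X_i$ and blow up further so that some prime divisor $S_i\subset\lfloor B_{Y_i}+t_i\Gamma_{Y_i}\rfloor$ satisfies $\phi_i(S_i)=Z_i$. A $(K_{Y_i}+B_{Y_i}+M_{Y_i}+t_i(\Gamma_{Y_i}+N_{Y_i})-\epsilon S_i)$-MMP over $X_i$ with $\epsilon>0$ small (which terminates by~\cite[Lemma 4.4]{BZ16} and does not contract $S_i$, cf.\ the Claim in the proof of Lemma~\ref{lem:CY-coreg-to-dim}) allows us to assume $-S_i$ is nef over $X_i$. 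The negativity argument from Theorem~\ref{thm:local-acc-coreg-c} yields $\phi_i^{-1}(Z_i)=S_i$ set-theoretically, whence $\Gamma_{Y_i}\cap S_i$ (or the corresponding moduli non-descent locus) dominates $Z_i$. After one further dlt modification preserving these properties, perform generalized adjunction to $S_i$ and pass to a general fiber $F_i$ of $S_i\to Z_i$. By Lemmas~\ref{lem:coreg-adjunction} and~\ref{lem:coreg-gen-fiber}, $(F_i,B_{F_i}+M_{F_i})$ is generalized log Calabi--Yau of coregularity at most $c$, and by Lemma~\ref{lem:coeff-adj} some coefficient on $F_i$ has the form $\frac{m_i-1+k_i t_i j_i+\alpha_i}{m_i}$ for some $m_i,k_i\in\zz_{>0}$, $j_i\in J$, and $\alpha_i\in (I\cup t_iJ)^+$ (or, if the jump is realized in the moduli part, some nef-Cartier moduli coefficient equals $t_i$).

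Setting $K:=I\cup\bigcup_i t_iJ$, a short check (using that $(t_i)$ is strictly increasing in $\rr_{>0}$ and that $J$ is DCC, so $\inf(J\setminus\{0\})>0$) shows $K$ satisfies DCC, and hence so does $D(K)$ by Lemma~\ref{lem:DI-DCC}. Apply Theorem~\ref{thm:global-acc-coreg-c} to the family $\{(F_i,B_{F_i}+M_{F_i})\}$ with coefficients in $D(K)$: the distinguished coefficient above must lie in a finite subset of $D(K)$. From here, an argument identical to the combinatorial endgame of Theorem~\ref{thm:global-acc-coreg-c}---bounding $m_i, k_i, j_i$, passing to subsequences on which they are constant, and observing that the remaining piece $\alpha_i$ would then have to be strictly decreasing while lying in a DCC set---contradicts the strict monotonicity of $(t_i)$. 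The main obstacle is the setup step: the MMP-scaling-plus-negativity trick that forces $\phi_i^{-1}(Z_i)=S_i$, together with the deliberate choice of $Z_i$ inside $\mathrm{Supp}(\Gamma_i)$, is precisely what guarantees that the jump multiplier $t_i$ resurfaces as a coefficient on $F_i$; without this, global ACC would only constrain the original coefficients of $B_i$ and $M_i$ and would say nothing about $t_i$.
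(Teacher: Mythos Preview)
Your argument is correct and is essentially the paper's own proof with Theorem~\ref{thm:local-acc-coreg-c} inlined rather than cited. The paper's version is much shorter: after choosing the minimal center $V_k$ and invoking Lemma~\ref{lem:coreg-throw-components} to arrange hypothesis~(6) of Theorem~\ref{thm:local-acc-coreg-c}, it notes that the coefficients of $B_k+t_k\Gamma_k$ and of $M_k+t_kN_k$ all lie in the DCC set $L=\{i+t_kj\mid i\in I\cup\{0\},\ j\in J\cup\{0\},\ k\in\zz_{>0}\}$, applies Theorem~\ref{thm:local-acc-coreg-c} directly to obtain a finite $L_0\subseteq L$, and reads off the contradiction from $i_k+t_kj_k=\ell_0$ on a subsequence with $j_k\neq 0$.
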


\begin{proof}
Let $t_k:={\rm lct}(X_k,B_k+M_k;\Gamma_k+N_k)$
be a strictly increasing sequence in ${\rm LCT}_c(I,J)$.
For each $k$, let $V_k$ be a minimal 
generalized log canonical center of
$(X_k,B_k+M_k+t_k(\Gamma_k+N_k))$
which is either 
contained in the support of $\Gamma_k$
or in the locus where $N_k$ does not descend.
By Lemma~\ref{lem:coreg-throw-components}, 
we may assume that every component of 
$\Gamma_k$ and $B_k$ contains $V_k$.
Furthermore, we may assume that neither
$N'_{k,j}$ nor $M'_{k,j}$ descend 
at the generic point of $V_k$.
Define 
\[
L = \{i+t_k j \mid i\in I\cup \{0\},j\in J\cup\{0\}, k\in \mathbb{Z}_{>0}\}
\]
so that the coefficients of 
$B_k+t_k\Gamma_k$ lie in $L$
and
the coefficients of 
$M_k+t_kN_k$, in the models where they descend,
lie in $L$.
Then, $L$ satisfies the descending chain condition.
By Theorem~\ref{thm:local-acc-coreg-c}, 
there exists a finite subset $L_0\subseteq L$
such that the coefficients
of $B_k+t_k\Gamma_k$ lie in $L_0$
and the coefficients
of $M_k+t_kN_k$, in the models where they descend,
lie in $L_0$.
Passing to a subsequence, we may
find $i_k\in I$ and $j_k\in J\setminus\{0\}$
with $i_k+t_kj_k=\ell_0$ for some fixed $\ell_0\in L_0$.
This gives a contradiction.
We conclude that the set ${\rm LCT}_c(I,J)$
satisfies the ascending chain condition.
\end{proof}

\subsection{Log canonical thresholds
of coregularity zero and one}
\label{ss:coreg-zero-one}
In this subsection, we study
the set of log canonical thresholds
of coregularity zero
and coregularity one.

\begin{theorem}\label{theorem:explicity-coreg-zero}
Let $I$ and $J$ be two sets of nonnegative real numbers.
We have that 
\[
{\rm LCT}_0(I,J)=
\left\{
\frac{1-i}{j}
\mid 
i\in I^+ \cap [0,1], j\in J^+
\right\}. 
\]
\end{theorem}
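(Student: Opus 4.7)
The plan is to prove the two set inclusions separately. The nontrivial direction is $\mathrm{LCT}_0(I,J)\subseteq\{(1-i)/j: i\in I^+\cap[0,1], j\in J^+\}$; the reverse inclusion is achieved by explicit examples.

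\textbf{Forward inclusion.} Let $t=\mathrm{lct}(X,B+M;\Gamma+N)\in \mathrm{LCT}_0(I,J)$, so that $(X,B+M+t(\Gamma+N))$ is generalised log canonical of coregularity $0$. After shrinking $X$, fix a zero-dimensional minimal generalised log canonical center $x$. Following the strategy of the proof of Theorem~\ref{thm:local-acc-coreg-c}, take a generalised dlt modification $\pi\colon Y\to X$ and extract a component $S\subseteq\lfloor B_Y+t\Gamma_Y\rfloor$ with $\pi(S)=\{x\}$; such $S$ exists because $x$ is a zero-dimensional lc center. Since $K_X+B+M+t(\Gamma+N)\sim_{\mathbb{Q}}0$ near $x$ and $\pi(S)=\{x\}$, generalised adjunction produces a generalised log Calabi--Yau pair $(S, B_S+M_S+t(\Gamma_S+N_S))$ on the projective variety $S$, of coregularity $0$ by Lemma~\ref{lem:coreg-adjunction}. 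Iteratively applying Lemma~\ref{lem:CY-coreg-to-dim} and invoking Remark~\ref{remark:reduce-to-dim-c} at each step, we reduce to a generalised log Calabi--Yau pair $(\mathbb{P}^1, B'+M'+t(\Gamma'+N'))$ of coregularity $0$. On $\mathbb{P}^1$, the log Calabi--Yau condition gives total degree $2$, and coregularity $0$ forces one point to have coefficient $1$; the remaining coefficients (together with any moduli degree contribution) therefore sum to $1$. Tracking the coefficient combinatorics through the adjunctions and dimension reductions shows that each remaining coefficient is an integer multiple of some $i_k\in I$ or of some $tj_k$ with $j_k\in J$, yielding the balance $\sum_k n_k i_k + t\sum_k m_k j_k=1$, hence $t=(1-\sum n_k i_k)/\sum m_k j_k$.

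\textbf{Reverse inclusion.} Given $i=\sum_{k=1}^m n_k i_k\in I^+\cap[0,1]$ and $j=\sum_{k=1}^n m_k j_k\in J^+$, reverse the construction above: choose a generalised log Calabi--Yau pair on $\mathbb{P}^1$ containing one point of coefficient $1$ together with, for each $k$, $n_k$ points of coefficient $i_k$ and $m_k$ points of coefficient $tj_k$, where $t=(1-i)/j$. The degree-$2$ identity is satisfied by construction, and the coefficient-$1$ point yields a zero-dimensional lc center so the pair has coregularity $0$. Taking an affine cone (or a suitable toric model) over this pair produces a germ $(X,B+t\Gamma;x)$ of coregularity $0$ whose log canonical threshold at $x$ equals $(1-i)/j$, with coefficients of $B$ in $I$ and coefficients of $\Gamma$ in $J$; in cases where $1\notin I$, one arranges the coefficient-$1$ divisor to appear as a dlt-exceptional component by starting from a singular $X$ whose exceptional divisor over $x$ supplies the coefficient-$1$ place.

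\textbf{Main obstacle.} The delicate point is the precise bookkeeping of coefficients on $\mathbb{P}^1$. Generalised adjunction (Lemma~\ref{lem:coeff-adj}) generically outputs coefficients in $D(I)$ of the form $(m-1+f)/m$ with $m>1$, not directly of the form $\sum n_k i_k$ demanded by the theorem. The crucial use of the coregularity-$0$ hypothesis is that, since the minimal lc center is a point, one can arrange each adjunction in the reduction chain so that the relevant coefficient arrives directly (no nontrivial quotient denominator), so every non-unit coefficient on the terminal $\mathbb{P}^1$ lies in $I^+\cup tJ^+$; combined with the coefficient-$1$ point, this yields the stated formula exactly.
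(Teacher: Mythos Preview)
Your overall architecture matches the paper's, but the forward inclusion contains a real gap, and your ``Main obstacle'' paragraph correctly identifies it without resolving it. You assert that in coregularity $0$ one can arrange every adjunction so that no orbifold denominator $m>1$ appears, leaving the terminal coefficients on $\mathbb{P}^1$ in $I^+\cup tJ^+$. This is neither proved nor true in general: adjunction through a cyclic-quotient point of index $m$ will produce coefficients $(m-1+f)/m$ regardless of the coregularity of the ambient pair, and Lemma~\ref{lem:CY-coreg-to-dim} only guarantees coefficients in $D(L)$ for $L=\{i+tj\}$. The paper does \emph{not} try to avoid denominators. Instead, after arriving at the degree equation
\[
\sum_{k=1}^m \frac{N_k-1+d_k}{N_k}=1
\]
on $\mathbb{P}^1$ (the right-hand side is $1$ because one coefficient-$1$ point has already been subtracted), it observes that $1-N_k^{-1}\geq 1/2$ whenever $N_k\geq 2$, and some $d_k>0$; hence at most one $N_k$ can exceed $1$. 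Assuming $N_k=1$ for $k\geq 2$, the equation becomes $d_1+N_1(d_2+\cdots+d_m)=1$, and since $N_1 d_k\in L^+$ one can write everything as $\sum i_k + t\sum j_k=1$ with $i_k\in I^+$, $j_k\in J^+$. This arithmetic step is the missing idea in your argument.

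For the reverse inclusion, your cone-over-$\mathbb{P}^1$ sketch is workable in spirit but underspecified, particularly the case $1\notin I$. The paper gives a concrete construction on $X=\mathrm{Bl}_q(\mathrm{Bl}_p\mathbb{P}^2)$: the two exceptional divisors $E_1,E_2$ meet and both acquire coefficient $1$ in the log pull-back of $(\mathbb{P}^2,B+t\Gamma)$, so coregularity $0$ and the coefficient-$1$ places come from the geometry of the blow-up rather than from $I$. If you keep your approach, you must actually build the singular germ and verify both the coregularity and that the threshold equals $(1-i)/j$; ``a suitable toric model'' is not yet a proof.
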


\begin{proof}
Denote the set on the right hand side to be $\mathcal{I}$. 
Let $t={\rm lct}(X,B+M;\Gamma+N)$ be a threshold
as in the statement.
Let $V$ be a minimal generalized lc center of $(X,B+M+t(\Gamma+N))$ 
which is either contained in the support of $\Gamma$
or the locus where $N$ does not descend. By Lemma \ref{lem:coreg-throw-components}, we may assume that every component of $\Gamma$ contains $V$
and no $N'_j$ descend over the generic point of $V$. Denote the set
\begin{equation}\label{eq:set-L}
L= \{i+t j \mid i\in I\cup \{0\}, j\in J\cup \{0\}\}
\end{equation}
be the set of all possible coefficient of 
$B+t\Gamma$ and 
coefficients of $M+tN$, where it descends.
Let $\gamma = i+t j$ be the coefficient of a component of $B+t\Gamma$ or $M+tN$ with $j\neq 0$.
By the proof of Theorem \ref{thm:local-acc-coreg-c}, we may find a projective generalized pair $(S,B_S+M_S)$ such that
\begin{itemize}
    \item $(S,B_S+M_S)$ is generalized log canonical,
    \item $K_S+B_S+M_S\equiv 0$,
    \item the coregularity of $(S,B_S+M_S)$ is zero,
    \item every component of $B_S$ has coefficient in $D(L)$,
    \item we can write 
    $M'_S=\sum_j \lambda_j M'_{S,j}$ 
    where $\lambda_j\in L$ and each $M'_{S,j}$ is nef Cartier, and
    \item either component of $B_S$ has coefficient in $D_{\gamma}(L)$ or $\gamma=\lambda_j$ for some $j$.
\end{itemize}
By Lemma \ref{lem:CY-coreg-to-dim} and Remark \ref{remark:reduce-to-dim-c}, we can obtain a generalized pair $(\pp^1,B_{\pp^1}+M_{\pp^1})$ such that 
\begin{enumerate}
    \item the generalized pair $(\pp^1,B_{\pp^1}+M_{\pp^1})$ is generalized log canonical,
    \item $K_{\pp^1}+B_{\pp^1}+M_{\pp^1}\equiv 0$, 
    \item the generalized pair $(\pp^1,B_{\pp^1}+M_{\pp^1})$ has coregularity zero, 
    \item every coefficient of $B_{\pp^1}$ lies in $D(L)$, 
    \item we can write $M_{\pp^1}=\sum_s \lambda_s M_{\pp^1,s}$ where $\lambda_s\in L$
    and each $M'_{\pp^1,s}$ is nef and integral, and
    \item either some component of $B_{\pp^1}$ has coefficient in $D_\gamma(L)$
    or $\gamma=\lambda_j$ for some $j$.
\end{enumerate}
Since $(\pp^1,B_{\pp^1}+M_{\pp^1})$ has coregularity zero, then some coefficient of $B_{\pp^1}$ equals one.
Now, we have that 
\[
{\rm deg}(K_{\pp^1}+B_{\pp^1}+M_{\pp^1})=0.
\]
Expanding this expression, we get an equation of the form
\[
\sum_{k=1}^m \frac{N_k-1 + d_k}{N_k} = 1,
\]
where $d_k\in L^+$ and $N_k\in \mathbb{Z}_{>0}$ and the term one comes from the coefficient-one component of $B_{\pp^1}$. Notice that $1-N_k^{-1}\geq 1/2$ if $N_k \geq 2$ and property (6) implies that $d_k> 0$ for some $k$. Hence we have at most one $k$ for which $N_k \neq 1$. Suppose $N_k = 1$ for all $k\geq 2$. Then
\[
d_1 + N_1(d_2 + \cdots + d_m) = 1.
\]
Write $d_k' = N_1d_k$ if $k\geq 2$ and $d_1'= d_1$. Then $d_k'\in L^+$ for all $k$. We further write $d_k' = i_k + tj_k$ for some $i_k\in I^+$ and $j_k\in J^+$. We have
\[
\sum_{k=1}^m (i_k + tj_k) = 1.
\]
Property (6) ensures that some $j_k\neq 0$ and hence we can solve 
\[
t = \frac{1-\sum_{k=1}^mi_k}{\sum_{k=1}^mj_k}\in \mathcal{I},
\]
as desired. This shows the inclusion
${\rm LCT}_c(I,J)\subseteq \mathcal{I}$ holds.

Now, let $t=(1-i)j^{-1}$ with $i\in I^+ \cap [0,1]$ and $j\in J^+$.
We can write $i=\sum_{k=1}^a m_ki_k$ for $m_k\in \zz_{>0}$
and $i_k\in I$.
We can write $j=\sum_{k=1}^b n_kj_k$ for $n_k\in \zz_{>0}$
and $j_k\in J$.
Let $X = \text{Bl}_q(\text{Bl}_p \pp^2)$ be a sequence of two blow-ups of $\pp^2$, where $p$ is a point on $\pp^2$ and $q$ is a point on the exceptional divisor of $\text{Bl}_p \pp^2\to \pp^2$. Denote the blow-up morphism $X\to \pp^2$ as $\pi$. Let $E_1$ and $E_2$ be the two exceptional divisors of $\pi$, such that the center of $E_1$ is a divisor on $\text{Bl}_p \pp^2$. Let $p_1,\ldots,p_a,p_1',\ldots,p_b'\in E_1\setminus E_2$ be distinct points. Let $q_1,\ldots,q_a,q_1',\ldots,q_b'\in E_2\setminus E_1$ be distinct points. 

For $1\leq k\leq a$, let $C_{k,s}$ and $D_{k,s}$ be pairwise different curves on $X$ such that:
\begin{itemize}
    \item $C_{k,s}\cap E_1 = \{p_k\}$ with intersection multiplicity one and $C_{k,s}\cap E_2 = \varnothing$; and 
    \item $D_{k,s}\cap E_2 = \{q_k\}$ with intersection multiplicity one and $D_{k,s}\cap E_1 = \varnothing$.
\end{itemize}
For $1\leq k\leq b$ and $1\leq s\leq n_k$, let $C_{k,s}'$ and $D_{k,s}'$ be pairwise different curves on $X$ such that:
\begin{itemize}
    \item $C_{k,s}'\cap E_1 = \{p_k'\}$ with intersection multiplicity one and $C_{k,s}'\cap E_2 = \varnothing$; and 
    \item $D_{k,s}'\cap E_2 = \{q_k'\}$ with intersection multiplicity one and $D_{k,s}'\cap E_1 = \varnothing$.
\end{itemize}
Consider the triple
\[
(X,B_X;\Gamma_X) = \left(X, \sum_{k=1}^a\sum_{s=1}^{m_k}i_{k}(C_{k,s}+D_{k,s}); \sum_{k=1}^b\sum_{s=1}^{n_k}j_{k}(C_{k,s}'+D_{k,s}' )\right).
\]
Denote $B = \pi_* B_X$ and $\Gamma = \pi_*\Gamma_X$. Then the coefficients of $B$ lie in $I$ and the coefficients of $\Gamma$ lie in $J$. We can compute
\begin{align*}
   & \pi^*\left(K_{\pp^2} + B + t\Gamma\right) \\&= K_X - E_1 - 2E_2 + B_X + \sum_{k=1}^a i_km_k(2E_1+3E_2) + t\Gamma_X + t\sum_{k=1}^b j_kn_k(2E_1+3E_2) \\
   &=  K_X+B_X+t\Gamma_X + E_1+E_2.
\end{align*}
As a result, $\pi: X\to \pp^2$ is a dlt modification of $(X,B_X+t\Gamma_X)$ and $\text{coreg}(X,B_X+t\Gamma_X) = 0$. Then the triple $(\pp^2,B;\Gamma)$ gives an example with coregularity zero and log canonical threshold $t$.
We conclude that $\mathcal{I}\subseteq {\rm LCT}_0(I,J)$. This finishes the proof of the statement.
\end{proof}

In order to state the theorem for the coregularity one case, 
we need to introduce some notation.
Let $I$ be a set of nonnegative real numbers.
Let $p,q,r$ be three positive integers. 
We set
\[
I^+_{(p,q,r)}:=
\left\{
qri_1+pri_2+pqi_3 \mid 
i_1,i_2,i_3\in I^+
\right\}. 
\]
We define the set 
\[
{\rm LCT}_{1,(p,q,r)}(I,J):=
\left\{
\frac{qr+pr+pq-pqr-i}{j}
\mid i\in I^+_{(p,q,r)} 
\text{ and }
j\in J^+_{(p,q,r)}
\right\}. 
\]
We call this set, the set of {\it weighted log canonical thresholds} of coregularity one.

\begin{theorem}\label{thm:coreg-one}
Let $I$ and $J$ be two sets of nonnegative real numbers.
We have that 
\[
{\rm LCT}_1(I,J):= 
\bigcup_{\frac{1}{p}+\frac{1}{q}+\frac{1}{r}>1} {\rm LCT}_{1,(p,q,r)}(I,J).
\] 
\end{theorem}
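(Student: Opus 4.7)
The plan is to mirror the proof of Theorem~\ref{theorem:explicity-coreg-zero}: establish $\subseteq$ by reducing to a two-dimensional log Calabi--Yau pair of Picard rank one and performing generalized adjunction to a minimal dlt center, then establish $\supseteq$ by an explicit construction on the weighted projective plane $\pp(p,q,r)$.

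For the inclusion $\subseteq$, let $t = {\rm lct}(X,B+M;\Gamma+N)$ realize an element of ${\rm LCT}_1(I,J)$. As in Theorem~\ref{theorem:explicity-coreg-zero}, we localize at a minimal generalized non-klt center $V$ of $(X,B+M+t(\Gamma+N))$ chosen so that every component of $\Gamma$ contains $V$ and no $N'_j$ descends over the generic point of $V$. Setting
\[
L = \{\, i + t j \mid i \in I \cup \{0\},\ j \in J \cup \{0\}\,\},
\]
the iterated reduction of Lemma~\ref{lem:CY-coreg-to-dim} and Remark~\ref{remark:reduce-to-dim-c} produces a projective generalized log Calabi--Yau pair $(Z,B_Z+M_Z)$ of coregularity at most one with $\dim Z \leq 2$, whose coefficients of $B_Z$ lie in $D(L)$ and whose moduli coefficients lie in $L$. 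If the coregularity of $(Z,B_Z+M_Z)$ is zero, the argument of Theorem~\ref{theorem:explicity-coreg-zero} applies verbatim and $t \in {\rm LCT}_0(I,J) \subseteq {\rm LCT}_{1,(1,1,1)}(I,J)$. Otherwise $\dim Z = 2$; after running a $(K_Z+B_Z+M_Z)$-trivial MMP, invoking Lemma~\ref{lem:coreg-mmp} to preserve both coregularity and the log Calabi--Yau condition, we may assume $Z$ has Picard rank one and that $\lfloor B_Z \rfloor$ is a single rational curve $C$. The classification of such Picard-rank-one log del Pezzo surfaces identifies $(Z,C)$, up to a dlt modification, with a weighted projective plane $\pp(p,q,r)$ together with $C$ passing through the three cyclic quotient singularities of orders $p,q,r$.

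The formula now falls out of generalized adjunction to $C$. By Lemma~\ref{lem:coeff-adj}, the coefficient of $B_C + M_C$ at the cone point of order $m \in \{p,q,r\}$ has the shape $(m-1+\beta_m)/m$ with $\beta_m = i_m + t j_m$ for some $i_m \in I^+$ and $j_m \in J^+$. The degree equation $\deg(K_C+B_C+M_C)=0$ on $C\simeq \pp^1$ becomes, after clearing denominators by $pqr$,
\[
qr(i_p+tj_p) + pr(i_q+tj_q) + pq(i_r+tj_r) = qr+pr+pq-pqr,
\]
so solving for $t$ places it in ${\rm LCT}_{1,(p,q,r)}(I,J)$; the inequality $\frac{1}{p}+\frac{1}{q}+\frac{1}{r}>1$ is forced by $t \geq 0$ and $i \geq 0$. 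For $\supseteq$, fix $(p,q,r)$ with $\frac{1}{p}+\frac{1}{q}+\frac{1}{r}>1$ and elements $i = qr\,i_1 + pr\,i_2 + pq\,i_3 \in I^+_{(p,q,r)}$, $j = qr\,j_1 + pr\,j_2 + pq\,j_3 \in J^+_{(p,q,r)}$. On $\pp(p,q,r)$ we construct a pair $(\pp(p,q,r),B;\Gamma)$ by placing auxiliary curves through the three cyclic singular points with coefficients tuned so that adjunction to a smooth curve $C \in |\oo_{\pp(p,q,r)}(p+q+r)|$ through these points produces the required cone-point coefficients $(m-1+i_m+tj_m)/m$. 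Reversing the degree computation gives ${\rm lct}(\pp(p,q,r),B;\Gamma)=t$, and a dlt modification extracting $C$ as the unique coefficient-one boundary component certifies that the coregularity is one.

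The chief obstacle lies in the classification step of $\subseteq$: showing that after MMP the Picard-rank-one coregularity-one log Calabi--Yau surface is precisely $\pp(p,q,r)$ with $C$ meeting three cyclic quotient singularities, and identifying the three orbifold indices governing the adjunction. The subtle point is that $\lfloor B_Z \rfloor$ may a priori decompose as several pairwise disjoint rational curves, and that klt components of $B_Z + M_Z$ may contribute additional quotient singularities along $C$. Handling both requires careful iteration of the MMP machinery of Lemma~\ref{lem:CY-coreg-to-dim}, together with Lemma~\ref{lem:coreg-throw-components} to discard components not needed for the formula, so that the resulting degree equation has exactly the three-term weighted shape above. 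A secondary but nontrivial point is the $\supseteq$ construction, where verifying both the precise value of the log canonical threshold and that the coregularity equals one (rather than zero) requires an explicit dlt modification of $\pp(p,q,r)$.
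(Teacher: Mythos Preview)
Your $\subseteq$ argument has a genuine gap: the classification you invoke is false. A Picard-rank-one klt del Pezzo surface supporting a coregularity-one log Calabi--Yau pair need not be a weighted projective plane, even up to dlt modification; there are infinitely many such surfaces (e.g.\ quasi-smooth hypersurfaces in weighted $\pp^3$), and even when $\lfloor B_Z\rfloor=C$ is a single rational curve the surface may have arbitrarily many singular points along $C$. You flag this as an ``obstacle'' to be handled by more MMP, but no amount of MMP will force $Z$ to be $\pp(p,q,r)$. (A smaller issue: a ``$(K_Z+B_Z+M_Z)$-trivial MMP'' does not make sense when $K_Z+B_Z+M_Z\equiv 0$, since every ray is trivial.)

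The paper avoids this entirely by not stopping in dimension two. It continues the reduction of Lemma~\ref{lem:CY-coreg-to-dim} (via Remark~\ref{remark:reduce-to-dim-c}) all the way to a generalized log Calabi--Yau pair on $\pp^1$ with coefficients in $D(L)$, and expands $\deg(K_{\pp^1}+B_{\pp^1}+M_{\pp^1})=0$ as $\sum_k (N_k-1+d_k)/N_k = 2$ with $d_k\in L^+$ and $N_k\in\zz_{>0}$. The triple $(p,q,r)$ is then extracted combinatorially, not geometrically: since $1-N_k^{-1}\geq\tfrac12$ whenever $N_k\geq 2$ and some $d_k>0$, at most three of the $N_k$ exceed $1$, and the remaining terms with $N_k=1$ are absorbed into the numerators; clearing denominators gives $t\in {\rm LCT}_{1,(p,q,r)}(I,J)$ with $p^{-1}+q^{-1}+r^{-1}>1$ automatic. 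Your own adjunction to $C$ would in fact produce this same degree equation on $C\simeq\pp^1$, so dropping the surface classification and arguing combinatorially on the coefficients would rescue your approach. For $\supseteq$ the paper also takes a different construction, using the orbifold cone over $(\pp^1,\tfrac1p\{p_1\}+\tfrac1q\{p_2\}+\tfrac1r\{p_3\})$ rather than $\pp(p,q,r)$; blowing up the vertex exhibits a unique exceptional log canonical place, which makes the verification that the coregularity is exactly one immediate.
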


\begin{proof}
Let $t\in {\rm LCT}_1(I,J)$.
Proceeding as in the proof of Theorem~\ref{theorem:explicity-coreg-zero}, 
we can produce a generalized log Calabi--Yau pair
$(\pp^1,B_{\pp^1}+M_{\pp^1})$ satisfying the following conditions:
\begin{enumerate}
    \item the coefficients of $B_{\pp^1}$ belong to $D(L)$,
    \item we can write 
    $M_{\pp^1}=\sum_s \lambda_sM_{\pp^1,s}$ where $\lambda_s\in L$ and each $M_{\pp^1,s}$ is a nef integral divisor,  and
    \item either some component of $B_{\pp^1}$ has coefficient in $D_\gamma(L)$ or $\gamma=\lambda_s$ for some $s$.
\end{enumerate}
Above, $L$ is defined as in~\eqref{eq:set-L}
and $\gamma$ is an element of the form $i+tj$ with $j\neq 0$. Expanding the equality
\[
\deg (K_{\mathbb{P}^1} + B_{\pp^1} + M_{\pp^1}) = 0,
\]
we obtain an equation of the form
\[
\sum_{k=1}^m \frac{N_k-1 + d_k}{N_k} = 2,
\]
where $d_k\in L^+$ and $N_k\in \mathbb{Z}_{>0}$ for each $k$. Note that $1-N_k^{-1}\geq 1/2$ if $N_k\geq 2$ and property (3) implies that $d_k> 0$ for some $k$. Thus, there are at most three $k$'s for which $N_k\geq 2$. Assume that $N_k = 1$ for all $k\geq 4$ and $m\geq 3$ (add zero terms if $m\leq 2$). Denote $N_1 = p$, $N_2 = q$, and $N_3 = r$. Then, the previous equation becomes
\[
\frac{1}{p} + \frac{1}{q} + \frac{1}{r} = 1 + \frac{d_1}{p} + \frac{d_2}{q}+\frac{d_3}{r}+\sum_{k=4}^m d_k.
\]
In particular, $1/p+1/q+1/r> 1$.
For each $k$, we write $d_k = i_k + tj_k$ for some $i_k\in I^+$ and $j_k\in J^+$. Then we have
\[
qr+pr+pq-pqr = qr(i_1+tj_1) + pr(i_2+tj_2)+pq(i_3+tj_3)+\sum_{k=4}^m pqr(i_k+tj_k).
\]
Property (3) ensures that some $j_k\neq 0$, so we can solve
\[
t = \frac{qr+pr+pq-pqr-qri_1-pri_2-pqi_3-\sum_{k=4}^m pqri_k}{qrj_1+prj_2+pqj_3+\sum_{k=4}^m pqrj_k} \in I_{(p,q,r)}^+(I,J),
\]
as desired.
We conclude that every element $t \in {\rm LCT}_1(I,J)$ is contained in a set
${\rm LCT}_{1,(p,q,r)}(I,J)$ for some $p,q,r \in \zz_{> 0}$ with $p^{-1}+q^{-1}+r^{-1}>1$.

Now, let $t\in I_{(p,q,r)}^+(I,J)$ for some $p,q,r\in \mathbb{Z}_{>0}$ such that $1/p+1/q+1/r>1$. We can write
\[
t = \frac{qr+pr+pq-pqr-(qri_1+pri_2+pqi_3)}{qrj_1+prj_2+pqj_3}
\]
for some $i_1,i_2,i_3\in I^+$ and $j_1,j_2,j_3\in J^+$. Write
\[
i_k = \sum_{\ell} m_{k,\ell}i_{k,\ell} \text{ and } j_k = \sum_{\ell} n_{k,\ell}j_{k,\ell}
\]
for some $i_{k,\ell}\in I$, $j_{k,\ell}\in J$, and $m_{k,\ell}, n_{k,\ell}\in \mathbb{Z}_{>0}$. 

Let $p_1,p_2$ and $p_3$ be three distinct points on $\pp^1$.
Let $(X;x)$ be the orbifold cone over $\pp^1$ with respect to the $\qq$-divisor $\frac{1}{p}\{p_1\}+\frac{1}{q}\{p_2\}+\frac{1}{r}\{p_3\}$.
Let $\widetilde{X}\rightarrow X$ be the blow-up of $x\in X$. Let $E\subset \widetilde{X}$ be the exceptional divisor, isomorphic to $\pp^1$.
The surface $\widetilde{X}$ has three singularities $x_1,x_2,x_3\in E$.
These singularities are of type $A_p,A_q$ and $A_r$, respectively.
For each $k=1,2,3$ and $\ell$, define:
\begin{itemize}
    \item $\tilde{B}_{k,\ell}$ be an irreducible curve on $\tilde{X}$ such that $\tilde{B}_{k,\ell}|_E = m_{k,\ell} x_k$,  and
    \item $\tilde{\Gamma}_{k,\ell}$ be an irreducible divisor on $\tilde{X}$ such that $\tilde{\Gamma}_{k,\ell}|_E = n_{k,\ell} x_k$.
\end{itemize}
Let
\[
\tilde{B} = \sum_{k,\ell} i_{k,\ell} \tilde{B}_{k,\ell} \text{ and } \tilde{\Gamma} = \sum_{k,\ell} j_{k,\ell}\tilde{\Gamma}_{k,\ell}.
\]
Denote $B= \phi_*\tilde{B}$ and $\Gamma = \phi_*\tilde{\Gamma}.$
Then the coefficients of $B$ lie in $I$ and the coefficients of $\Gamma$ lie in $J$. Note that we have
\[
(K_{\tilde{X}} + \tilde{B} + t\tilde{\Gamma} + E)|_E \sim_{\mathbb{Q}} K_E+\left(\frac{p-1+i_1+tj_1}{p}\right)x_1 +\left(\frac{q-1+i_2+tj_2}{q}\right) x_2 + \left(\frac{r-1+i_3+tj_3}{r}\right)x_3.
\]
Thus, the divisor $K_{\widetilde{X}}+\widetilde{B}+t\widetilde{\Gamma}+E$ intersects $E$ trivially. 
Hence, we have that 
\[
\phi^*(K_X + B +t\Gamma) = K_{\tilde{X}} + \tilde{B} + t\tilde{\Gamma} + E.
\]
Then, the pair $(X,B+t\Gamma;x)$ is log canonical but not klt at the vertex. Furthermore, the pair has coregularity one as $E$ is the only log canonical place.
We conclude that $t\in {\rm LCT}_1(I,J)$. 
This finishes the proof.
\end{proof}

\subsection{Proof of the theorems}
\label{ss:proofs}
In this subsection, we prove the theorems of the introduction.
The first three theorems presented in the introduction are already proved in the previous sections in a broader context.
For the statement about accumulation points, we need a short argument.

\begin{proof}[Proof of Theorem~\ref{introhm:acc-coreg}]
This follows from Theorem~\ref{theorem:ACC-coreg} for generalized pairs.
\end{proof}

\begin{proof}[Proof of Theorem~\ref{introthm:acc-global-coreg}]
This follows from Theorem~\ref{thm:global-acc-coreg-c} for generalized pairs.
\end{proof}

\begin{proof}[Proof of Theorem~\ref{introthm:coreg-zero-one}]
This follows from Theorem~\ref{theorem:explicity-coreg-zero} for generalized pairs.
\end{proof}

In order to prove the last theorem, 
we need to use the following notation.

\begin{notation}
{\em
We denote by
$\mathcal{N}_c(I,t)$
the set of log Calabi--Yau
log canonical pairs $(X,B+\Gamma)$
so that the coefficients of $B$ belong to
$D(I)$, the coefficients of $\Gamma\neq 0$ belong to
$D_t(I)$,
and 
${\rm coreg}(X,B+\Gamma)\leq c$.
We set
\[
N_c(I):=\{t\mid \mathcal{N}_c(I,t) \text{ is non-empty }\}. 
\]
Analogously,
we denote by
$\mathcal{N}_{\dim \leq c}(I,t)$
the set of log Calabi--Yau
log canonical pairs $(X,B+\Gamma)$
so that the coefficients of $B$ belong to $D(I)$,
the coefficients of $\Gamma\neq 0$ belong 
to $D_t(I)$,
and the dimension of $X$ is at most $c$.
We set
\[
N_{\dim\leq c}(I):=\{t\mid \mathcal{N}_{\dim\leq c}(I,t) \text{ is non-empty}\}. 
\]
Note that $N_{\dim \leq c}(I)\subset N_c(I)$
as every log Calabi--Yau pair of dimension at most $c$
has coregularity at most $c$.
}
\end{notation}

\begin{proof}[Proof of Theorem~\ref{introthm:accum-points}]
Let $t\in {\rm LCT}_c(I)$,
by the proof of Theorem~\ref{theorem:ACC-coreg}, we deduce that 
$t\in N_c(I)$. 
By the proof of Theorem~\ref{thm:global-acc-coreg-c}, 
it follows that 
$t\in N_{\dim \leq c}(I)$.
Thus, we conclude that we have an inclusion
\[
{\rm LCT}_c(I)\subseteq N_{\dim \leq c}(I).
\]
As in the proof of Theorem~\ref{thm:coreg-one}, taking the orbifold cone over a log Calabi--Yau pair in $\mathcal{N}_{{\rm dim}\leq c}(I,t)$ gives us the opposite inclusion
\[
N_{\dim \leq c}(I)\subseteq {\rm LCT}_c(I).
\]
Thus, we conclude that 
\begin{equation}\label{eq:lct=n}
{\rm LCT}_c(I) = N_{\dim \leq c}(I).
\end{equation}
Now, we take accumulation points at both sides of the previous equality and get 
\[
{\rm Acc}({\rm LCT}_c(I)) \subseteq {\rm Acc}(N_{\dim \leq c}(I)), 
\]
where ${\rm Acc}(S)$ stands for the 
accumulation points of the set $S$.
By~\cite[Proposition 11.7]{HMX14}, we conclude that 
\[
{\rm Acc}({\rm LCT}_c(I)) = {\rm Acc}(N_{\dim \leq c}(I)) 
\subseteq N_{\dim \leq c-1}(I) = {\rm LCT}_{c-1}(I),
\]
where we used equality~\eqref{eq:lct=n}
in the first and last equalities.
This finishes the proof of the theorem.
\end{proof}

\section{Examples and Questions}

In this section, we give a couple of examples
related to the coregularity of fibrations and singularities.
We also propose some questions for further research.
First, we show an example of log canonical thresholds
on singularities that are closely related to toric singularities, the so-called
complexity one $\mathbb{T}$-singularities
(see, e.g.,~\cite{LLM18,LLM19,LLM20}).

\begin{example}\label{ex:1}
{\em 
Let $(X,B;x)$ be a toric singularity with reduced boundary $B$.
Let $\Gamma$ be a reduced torus invariant divisor through $x$.
Then, we have that 
\[
{\rm lct}(X,B;\Gamma) \in \{0,1\},
\]
holds independently of the dimension of the germ $(X;x)$.

Let $(X,B;x)$ be a complexity one rational log canonical $\mathbb{T}$-singularity with $B$ reduced, i.e.,
$X$ is $n$-dimensional and admits
the effective action of a $(n-1)$-dimensional torus $\mathbb{T}$ and $B$ is $\mathbb{T}$-invariant.
Let $\Gamma$ be an effective
reduced $\mathbb{T}$-invariant divisor.
We can find a $\mathbb{T}$-equivariant projective birational morphism 
$\phi\colon Y\rightarrow X$ so that 
$\phi^{-1}(x)\simeq \pp^1=:C$.
Furthermore, the variety $Y$ admits a good $\mathbb{T}$-quotient to $Z\simeq \pp^1$
and the log pair 
\[
\phi^*(K_X+B+t\Gamma)=K_Y+B_Y+t\Gamma_Y
\] 
has toroidal singularities.
Furthermore, as we are assuming $(X,B+t\Gamma)$ is strictly log canonical at $x$, we may assume that
some log canonical center of $(Y,B_Y+t\Gamma_Y)$ is contained
in $C$.
Hence, we conclude that all the $\mathbb{T}$-invariant divisors of $Y$ which dominate $Z$
must appear with coefficient one in $B_Y+t\Gamma_Y$.
In particular, performing adjunction to $C$,
we obtain a one-dimensional dlt pair on $C\simeq \pp^1$.
Taking the degree of this dlt pair, we either
get a relation of the form 
\[
\left(1-\frac{1}{p}\right)+\left(1-\frac{1}{q}\right)+\left(1-\frac{1}{r}\right)+t=2,
\] 
or
\[
\left(1-\frac{1}{p}+\frac{t}{p}\right)+\left(1-\frac{1}{q}\right)+\left(1-\frac{1}{r}\right)=2,
\]
for certain positive integers $p,q,r$.
We conclude that $t$ must belong to the set 
\[
\left\{ 
\frac{1}{p}+\frac{1}{q}+\frac{1}{r}-1 \mid 
p,q,r\in \zz_{>0}
\right\}\cap [0,1] 
\cup 
\left\{
p\left(\frac{1}{p}+\frac{1}{q}+\frac{1}{r}-1\right) \mid 
p,q,r\in \zz_{>0}
\right\}\cap [0,1]. 
\] 
Note that the previous set of log canonical thresholds $t$
is independent of the dimension of the germ.
Indeed, the coregularity of the previous pairs 
is at most one independently of their dimension.
}
\end{example}

The following example shows that the ascending chain
condition does not hold if we do not bound the coregularity or control the coefficients of the involved divisors.

\begin{example}\label{ex:4}
{\rm 
First, we have ${\rm lct}(\mathbb{A}^1,i\{0\};j \{0\})=(1-i)j^{-1}$.
Then, if either $I$ or $J$ does not satisfy the 
descending chain condition, 
then the set ${\rm LCT}_1(I,J)$ does not satisfy
the ascending chain condition.
Thus, the DCC condition for $I$ and $J$ in the statement of Theorem~\ref{introhm:acc-coreg}
is necessary.

Now, we turn to show that the condition
on the coregularity in Theorem~\ref{introhm:acc-coreg} is also a necessary one.
Let $H_n$ be a smooth hypersurface of degree $n+2$ in $\mathbb{P}^n$.
Then, we have that the pair 
\[
\left(\pp^n, \frac{n+1}{n+2}H_n \right)
\] 
is log Calabi Yau.
Let $\Gamma_n$ be the cone over $H_n$ in $\mathbb{A}^{n+1}$.
Then, $\Gamma_n$ is an irreducible divisor on $\mathbb{A}^{n+1}$
and for every $t$, the log pair
$(\mathbb{A}^{n+1},t\Gamma_n)$ has an isolated singularity at the origin.
We have that
\[
t_n:={\rm lct}(\mathbb{A}^{n+1},\Gamma_n)= 1-\frac{1}{n+2}.
\]
The log pair $(\mathbb{A}^{n+1},t_n\Gamma_n)$ has coregularity $n$.
Indeed, it has a unique log canonical center which corresponds
to the blow-up of the maximal ideal of $\mathbb{A}^{n+1}$ at the origin. 
Hence, we have that
\[
{\rm coreg}(\mathbb{A}^{n+1},t_n\Gamma_n) = n+1 -\dim\mathcal{D}(\mathbb{A}^{n+1},t_n\Gamma_n)-1 = n.
\]
Note that the sequence of $t_n$ violates the ascending chain condition.
However, the coregularity of the sequence
$(\mathbb{A}^{n+1},t_n\Gamma_n)$ is also unbounded. 
}
\end{example}

In the following example,
we show that in a log Calabi--Yau fibration
the coregularity of the general fiber can be arbitrary, 
even if the coregularity of the log pair in the domain is zero.

\begin{example}\label{ex:5}
{\em 
Consider a maximal Fano degeneration $\pi\colon \mathcal{X}\rightarrow \mathbb{A}^1$ of $\pp^n$, i.e., 
\[
\pi^{-1}(\mathbb{A}^1-\{0\}) \simeq \pp^n \times (\mathbb{A}^1-\{0\})
\]
and $\mathcal{X}_0:=\pi^{-1}(\{0\})$ is the union the $n+1$ 
projective toric varieties  intersecting along 
toric subvarieties
and the dual complex of $\mathcal{X}_0$ is a $n$-simplex
(see, e.g.,~\cite[Theorem 1.2]{ML20}).
We consider a general element $H\in |-2(K_{\mathcal{X}}+\mathcal{X}_0)|$, 
then the pair 
\begin{equation}\label{eq:lcy-pair-coreg-ex} 
\left( \mathcal{X}, \mathcal{X}_0+H/2 \right)
\end{equation} 
is log Calabi--Yau over the base and log canonical.
Note that all the log canonical centers of~\eqref{eq:lcy-pair-coreg-ex}
map to zero.
The general fiber of this log pair is 
\[
\left( \pp^n, H|_{\pp^n}/2\right)
\]
which is a log Calabi--Yau klt pair.
Thus, the coregularity of the general fiber is $n$.
However, the log pair~\eqref{eq:lcy-pair-coreg-ex}
has coregularity zero as it is dlt and 
it has a zero-dimensional log canonical center contained in $\mathcal{X}_0$.
Note that this example is over an affine base, however, it can be turned into a projective example by compactifying at infinity as a product.
}
\end{example}

In the following example, 
we show examples of germs with arbitrary dimension
and bounded coregularity.

\begin{example}\label{ex:3}
{\em 
For examples of log Calabi--Yau pairs with coregularity zero 
and arbitrary dimension, 
we can simply consider a log Calabi--Yau toric pair $(T,\Gamma_T)$. 
In the local case, we can consider toric singularities
with the reduced torus invariant boundary.
In these examples, the number of components of the boundary tends to be high.
Indeed, we have exactly 
$\dim T + \rank{\rm Cl}_\qq(T)$ prime components in the boundary.

We show that, in arbitrary dimension, there are examples of log canonical singularities of coregularity zero 
with a unique prime component in the boundary and coefficient $\frac{1}{2}$.
Let $(\mathcal{X},\mathcal{X}_0+H_1/2)$
and $(\mathcal{X},\mathcal{X}_0+H_2/2)$ be two log Calabi--Yaus constructed as in Example~\ref{ex:5}. 
Here, $H_1$ and $H_2$ are two different general elements of the linear system 
$|-2(K_{\mathcal{X}}+\mathcal{X}_0)|$.
We let $\pi\colon\mathcal{Y}\rightarrow \mathcal{X}$ be the blow-up of 
the intersection of $H_1$ with a component of $\mathcal{X}_0$.
We let $E$ be the exceptional divisor of this projective birational morphism.
We denote by $F_1$ (resp. $F_2$) the
strict transform of $H_1$ (resp. $H_2$) on $\mathcal{Y}$,
and by $\mathcal{Y}_0$
the strict transform of $\mathcal{X}_0$ on $\mathcal{Y}$.
Note that we have 
\begin{equation}\label{ex:log-crepant-1}
\pi^*(K_{\mathcal{X}}+\mathcal{X}_0+H_1/2)=
K_{\mathcal{Y}}+\mathcal{Y}_0+F_1/2+E/2.
\end{equation}
and
\begin{equation}\label{ex:log-crepant-2}
\pi^*(K_{\mathcal{X}}+\mathcal{X}_0+H_2/2)=
K_{\mathcal{Y}}+\mathcal{Y}_0+F_2/2.
\end{equation} 
Due to~\eqref{ex:log-crepant-1}, we know that the morphism
$\mathcal{Y}\rightarrow \mathbb{A}^1$ is of Fano type.
Observe that 
\begin{equation}\label{ex:log-div-5} 
 K_{\mathcal{Y}}+\mathcal{Y}_0+F_1/2+(1-\epsilon)E/2,
\end{equation} 
is dlt and pseudo-effective over the base. 
We run a minimal model program for~\eqref{ex:log-div-5} over the base 
$\mathbb{A}^1$. 
Since $\mathcal{Y}\rightarrow \mathbb{A}^1$ is of Fano type,
this minimal model program terminates with a good minimal model over the base.
We call this model $\mathcal{Z}\rightarrow \mathbb{A}^1$.
Let $E_{\mathcal{Z}}$ be the push-forward of $E$ on $\mathcal{Z}$.
Analogously, 
let $F_{\mathcal{Z},i}$ be the push-forward of $F_i$ on $\mathcal{Z}$.
Then, we conclude that the log pair 
\[
\left(\mathcal{Z},F_{\mathcal{Z},2}/2\right)
\] 
is log crepant to the pair~\eqref{ex:log-crepant-2}.
Thus, it has coregularity zero.
Furthermore, this log pair has a unique irreducible component of coefficient $\frac{1}{2}$.
Let $X$ be the cone of $-K_{\mathcal{Z}}$ over $\mathbb{A}^1$
and $\Gamma$ be the cone over $F_{\mathcal{Z},1}$.
We denote by $x\in X$ the point corresponding to 
$\{0\}$ in this cone
with one-dimensional fixed point locus.
Then, we have that 
$(X;x)$ is a $n$-dimensional $\qq$-factorial klt singularity.
On the other hand, 
the pair 
$(X,\Gamma/2;x)$ is a $n$-dimensional $\qq$-factorial pair,
$\Gamma$ is prime, 
and the coregularity of $(X,\Gamma/2;x)$ is zero.
}
\end{example}

A polytopal variant of the log canonical threshold,
the so-called log canonical threshold polytopes, 
were introduced by Musta\c{t}\u{a} and Libgober in~\cite{ML11}.
Given a projective variety $X$
and a sequence of $\qq$-Cartier effective divisors 
$D_1,\dots,D_s$, we can define the 
{\em log canonical threshold polytope}, to be
\[
{\rm LCT}(X;D_1,\dots,D_r):= 
\left\{ 
(t_1,\dots,t_s)\in \rr^s \mid 
(X,t_1D_1+\dots+t_sD_s) \text{ is log canonical}
\right\}. 
\]
The ACC for log canonical threshold polytopes of the same dimension and number of divisors was proved in~\cite{HLQ21}.
This means that there is no infinite sequence of these polytopes
so each of them strictly contains the previous one.
Instead, we can consider log canonical threshold polytopes
on which the coregularity is bounded when the thresholds are computed. 
Analogously to the main theorem of this article, we can ask:

\begin{question}
Does the ascending chain condition for log canonical threshold polytopes 
with bounded coregularity hold? 
\end{question}

The study of numerical thresholds 
naturally leads to the study of pseudo-effective thresholds, i.e.,
questions related to the Fujita's spectrum conjecture (see, e.g.,~\cite{HL20,HL21}).
Given a log canonical pair $(X,\Delta)$ and an effective divisor
$\Gamma$ on $X$, we define the {\em pseudo-effective threshold}
of $(X,\Delta)$ with respect to $\Gamma$, denoted by
$p(X,\Delta;\Gamma)$ to be the smallest positive real number $p$ for which
$K_X+\Delta+p\Gamma$ is pseudo-effective.
We set $p=\infty$ if the previous divisor is never pseudo-effective
and zero if $K_X+\Delta$ is already pseudo-effectve.
We can define: 
\[
\mathcal{P}_c(I,J):=\left\{ 
p \mid p=p(X,\Delta;\Gamma), {\rm coeff}(\Delta)\in I, 
{\rm coeff}(\Gamma)\in J, \text{ and }
{\rm coreg}(X,\Delta+p\Gamma)\leq c
\right\}. 
\]
Then, it is natural to ask:

\begin{question}
Let $c$ be a positive integer.
Let $I$ and $J$ be two sets of nonnegative real numbers
satisfying the descending chain condition.
Does $\mathcal{P}_c(I,J)$ satisfy the ascending chain condition?
\end{question}

We expect both previous questions to have positive answers. 
However, some new ideas must be introduced in order to prove them.

\bibliographystyle{habbvr}
\bibliography{mybibfile}

\vspace{0.5cm}
\end{document}